\DeclareMathSymbol{\invques}{\mathord}{operators}{`>}
\DeclareRobustCommand{\tmquestiondown}{%
  \ifmmode\invques\else\textquestiondown\fi
}
\numberwithin{equation}{section}
\newcommand{\mylabel}[2]{#2\def\@currentlabel{#2}\label{#1}}
\newtheorem{theorem}{Theorem}[section]
\newtheorem{lemma}[theorem]{Lemma}
\newtheorem{conj}[theorem]{Conjecture}
\newtheorem{proposition}[theorem]{Proposition}
\newtheorem{corollary}[theorem]{Corollary}
\newtheorem{defn}[theorem]{Definition}
\newtheorem{remark}[theorem]{Remark}
\newcommand{\cI}{\mathcal{I}}
\newcommand{\fv}{\mathfrak{v}}
\newcommand{\cX}{\mathcal{X}}
\newcommand{\Gal}{\operatorname{Gal}}
\newcommand{\CC}{\mathbb{C}}
\newcommand{\QQ}{\mathbb{Q}}
\newcommand{\Qp}{\mathbb{Q}_p}
\newcommand{\Zp}{\mathbb{Z}_p}
\newcommand{\ZZ}{\mathbb{Z}}
\newcommand{\ord}{\mathrm{ord}}
\newcommand{\fp}{\mathfrak{p}}
\newcommand{\vp}{\varphi}
\newcommand{\cH}{\mathcal{H}}
\newcommand{\cO}{\mathcal{O}}
\newcommand{\HIw}{H^1_{\mathrm{Iw}}}
\newcommand{\GL}{\mathrm{GL}}
\newcommand{\col}{\mathrm{Col}}
\newcommand{\image}{\mathrm{Im}}
\newcommand{\cyc}{\textup{cyc}}
\newcommand{\Frob}{\mathrm{Frob}}
\newcommand{\Hom}{\mathrm{Hom}}
\newcommand{\Sel}{\mathrm{Sel}}
\newcommand{\Char}{\mathrm{Char}}
\newcommand{\cP}{\mathcal{P}}
\newcommand{\cF}{\mathcal{F}}
\newcommand{\fM}{\m}
\newcommand{\rd}{\mathrm{d}}
\definecolor{Green}{rgb}{0.0, 0.5, 0.0}
\newcommand{\green}[1]{\textcolor{Green}{#1}}
\newcommand{\m}{\mathfrak{m}}
\newcommand{\Div}{\mathrm{Div}}
\newcommand{\cG}{\mathcal{G}}
\newcommand{\cE}{\mathcal{E}}
\newcommand{\Cp}{\mathbb{C}_p}
\newcommand{\Up}{\Upsilon}
\newtheorem{lthm}{Theorem} 
\newtheorem{assumption}{Assumption}
\newenvironment{smallpmatrix}
  {\left(\begin{smallmatrix}}
  {\end{smallmatrix}\right)}
 \definecolor{pAlgae}{RGB}{87,115,135}
\definecolor{airforceblue}{rgb}{0.36, 0.54, 0.66}
	\definecolor{bondiblue}{rgb}{0.0, 0.58, 0.71}
\definecolor{britishracinggreen}{rgb}{0.0, 0.26, 0.15}
\definecolor{camouflagegreen}{rgb}{0.47, 0.53, 0.42}
\definecolor{darkcyan}{rgb}{0.0, 0.55, 0.55}
\begin{document}
\author{Raiza Corpuz}
\address{Raiza Corpuz\newline  School of Computing \& Mathematical Sciences\\ The University of Waikato\\ Hamilton 3240\\New Zealand}
\email{rc202@students.waikato.ac.nz}

\author{Antonio Lei}
\address{Antonio Lei\newline Department of Mathematics and Statistics\\University of Ottawa\\
150 Louis-Pasteur Pvt\\
Ottawa, ON\\
Canada K1N 6N5}
\email{antonio.lei@uottawa.ca}

\title{Congruences of $p$-adic $L$-functions of modular forms at non-ordinary primes}

\subjclass[2020]{11F33 (primary); 11F67, 11R23 (secondary)}
\keywords{Iwasawa theory, modular forms, $p$-adic $L$-functions, non-ordinary primes}
\begin{abstract}
We present an analogue of Greenberg--Vatsal's and Emerton--Pollack--Weston's results on congruences of $p$-adic $L$-functions for $p$-non-ordinary cuspidal eigenforms $f$ and $g$ of equal weight that are $p$-congruent. In particular, we prove that the Iwasawa invariants of the analytic and algebraic signed $p$-adic $L$-functions of $f$ and $g$ are related by explicit formulae under appropriate hypotheses. We also show under the same assumptions that provided the algebraic and analytic $\mu$-invariants vanish, the signed Iwasawa main conjecture is true for $f$ if and only if it is true for $g$.
\end{abstract}

\maketitle

\section{Introduction} 

Let $p$ be an odd prime. Let $f$ and $g$ be Hecke eigenforms such that their residual representations at $p$ are isomorphic: $\overline{\rho}_f  \cong \overline{\rho}_g$. Suppose that $f$ and $g$ are $p$-ordinary. In the seminal work of Vatsal \cite{Vat99}, he observed a congruence between the $p$-adic $L$-functions of such modular forms. The main conjecture of Iwasawa theory postulates an equality between the ideal generated by the $p$-adic $L$-function and the characteristic ideal of the dual $p$-primary Selmer group over the cyclotomic $\Zp$-extension of $\QQ$. Let $\mu(\ast)_\text{\rm an}$ and $\lambda(\ast)_\text{\rm an}$ (resp. $\mu(\ast)_\text{\rm alg}$ and $\lambda(\ast)_\text{\rm alg}$) be the analytic (resp. algebraic) Iwasawa invariants for $\ast\in\{f,g\}$. These invariants measure the $p$-divisibility of a power series and the number of zeroes in the $p$-adic open unit disk. The Iwasawa invariants of $f$ and $g$ are expected to be closely linked.

When $f$ and $g$ correspond to elliptic curves, Greenberg and Vatsal \cite{greenbergvatsal} showed that the Iwasawa invariants of $f$ and $g$ behave in a parallel manner. This phenomenon was also observed by Emerton, Pollack and Weston \cite{EPW} within a Hida family $\mathcal{H}(\overline{\rho})$ that parametrizes modular forms with the same residual representation $\overline{\rho}$. For $\bullet \in \{ \text{\rm an, alg} \}$, these authors have proven that if $\mu(f_0)_\bullet = 0$ for one $f_0 \in \mathcal{H}(\overline{\rho})$, then $\mu(f)_\bullet = 0$ for any $f \in \mathcal{H}(\overline{\rho})$. They have also shown that if the $\mu_\bullet$-invariant for one element of $\cH(\overline\rho)$ (and therefore for all) vanishes, then one obtains a transition formula for the $\lambda_\bullet$-invariant. Lastly, combined with Kato's result on one divisibility of the Iwasawa main conjecture in \cite{kato04}, they proved that if the Iwasawa main conjecture is true for some $f_0\in\cH(\overline\rho)$ and the $\mu_{\mathrm{alg}}$- and $\mu_{\mathrm{an}}$-invariants vanish, then the same is true for all $f\in\cH(\overline\rho)$.

It is natural to ask whether the results discussed above can be extended to a pair of \emph{$p$-non-ordinary} Hecke eigenforms. There are points of divergence from the ordinary case: first, the $p$-adic $L$-functions  constructed in \cite{amicevelu75,visik76,MTT} do not give rise to bounded power series; second, the dual $p$-primary Selmer groups over the cyclotomic $\Zp$-extension of $\QQ$ are not torsion over the corresponding Iwasawa algebra.

The last few decades have seen substantial progress toward bringing the non-ordinary case in equal footing as the ordinary case. For a modular form $f$ of weight $k \geq 2$ and  $a_p(f) = 0$, Pollack \cite{pollack03} constructed plus and minus $p$-adic $L$-functions $L_p^\pm(f)$ that give rise to bounded power series. Let $\Up$ be a root of the polynomial $X^2 + a_p(f)X + \epsilon_f(p)p^{k-1}$ and write $L_p(f, \Up)$ for the $p$-adic $L$-function given in  \cite{amicevelu75,visik76,MTT}. We have
\[
L_p(f, \Up) = \log_{p,k}^+ \cdot L_p^+(f) + \Up \cdot \log_{p,k}^- \cdot L_p^-(f),
\]
where $\log_{p,k}^\pm$ are explicit functions defined using cyclotomic polynomials. Generalizing this, Sprung \cite{sprung09,Sprung17} obtained a pair of $p$-adic functions $(L_p(f, \sharp), L_p(f, \flat))$ for modular forms of weight two without the assumption that $a_p(f)=0$. When $f$ corresponds to an elliptic curve with $a_p(f)=0$, Kobayashi gave an arithmetic interpretation of Pollack's signed $p$-adic $L$-functions \cite{kobayashi03} by constructing the plus and minus Selmer groups $\Sel_p^\pm(f)$. The duals of these groups are torsion over the Iwasawa algebra of the cyclotomic $\Zp$-extension of $\QQ$, allowing us to formulate the plus and minus Iwasawa main conjectures, which assert that there exists $n^\pm \in \ZZ$ such that the characteristic ideal of $\Sel_p^\pm(f)^\vee$ is generated by $p^{n^\pm} \cdot L_p^\pm(f)$.
One inclusion of this conjecture was proved in \cite{kobayashi03}. When the elliptic curve has complex multiplication, the full main conjecture was proved by Pollack--Rubin \cite{pollackrubin04}.
These ideas have been generalized to higher-weight modular forms in \cite{lei09,LLZ0,LLZ0.5} using $p$-adic Hodge theory and Wach modules. 

The main goal of this article is to produce an analogue of Greenberg--Vatsal's and Emerton--Pollack--Weston's results for $p$-non-ordinary cuspidal eigenforms.
We fix once and for all a prime $p \geq 3$ and an embedding $\overline{\QQ} \hookrightarrow \Cp$. We consider a pair of normalized cuspidal eigenforms $f = \sum a_n q^n \in S_k(\Gamma_1(N_f), \epsilon_f)$ and $g = \sum b_n q^n \in S_k(\Gamma_1(N_g), \epsilon_g)$ that are non-ordinary at $p$ with $\mathrm{lcm}(N_f,N_g)\ge4$. By non-ordinary, we mean that $\ord_p(a_p), \ord_p(b_p) > 0$. Let $E$ be a finite extension of $\Qp$ that contains the image of $a_n$ and $b_n$ for all $n$ under our fixed embedding, and let $\mathcal{O}$ be its ring of integers. We choose a uniformizer $\varpi$ of the maximal ideal of $\mathcal{O}$.

Let $\omega: \Gal(\QQ(\mu_p)/\QQ) \to \Zp^\times$ be the Teichm\"{u}ller character.
We write $L_p(\ast, \natural, \omega^i, X)\in\cO\llbracket X\rrbracket\otimes_\cO E$ for the $\omega^i$-isotypic component of the signed $p$-adic $L$-function $L_p(\ast,\natural)$ for $\ast\in\{f,g\}$ and $\natural\in\{\sharp,\flat\}$ (see \S\ref{sec:2} for a detailed discussion).
 In this article, we work with the following assumptions.

\begin{assumption}\label{asi:main}
\textit{(analytic congruence)}
    The eigenforms $f$ and $g$ are $\varpi^r$-congruent, that is, there exists a positive integer $r$, such that $a_m \equiv b_m \bmod \varpi^r$, for all integers $m$ that are relatively prime to $N$.
\end{assumption}


\begin{assumption}\label{asi:FL}
\textit{(Fontaine--Laffaille)} 
$p > k$.
\end{assumption}

\begin{assumption}\label{asi:unram}
\textit{(unramified)}
The extension $E/\Qp$ is unramified.
\end{assumption}

\begin{assumption}\label{asi:Lp-nonzero}
\textit{(non-zero)} $\natural\in\{\sharp,\flat\}$ and $i\in\{0,\dots, p-2\}$ are such that both $L_p(f, \natural, \omega^i, X)$ and $L_p(g, \natural, \omega^i, X)$ are non-zero.\end{assumption}

\begin{assumption}\label{asi:cong}
\textit{(algebraic congruence)}
$T_f/\varpi T_f \cong T_g/\varpi T_g$ as $G_\QQ$-representations.
\end{assumption}

In Assumption~\ref{asi:cong}, $T_f$ and $T_g$ are $G_\QQ$-stable $\cO$-lattices inside Deligne's representations attached to $f$ and $g$, respectively. Since $f$ and $g$ are assumed to be $p$-non-ordinary, these representations are irreducible and $G_\QQ$-stable lattices are unique up to isomorphism.
Note that we may take $\varpi=p$ under Assumption~\ref{asi:unram}. Furthermore, if Assumption~\ref{asi:cong} holds, then $a_m\equiv b_m \bmod p$ for all $m$ that are relatively prime to $N$. In other words, Assumption~\ref{asi:cong} implies that Assumption~\ref{asi:main} holds for $r=1$. Conversely, under Assumptions \ref{asi:main} and \ref{asi:FL}, the residual representations are irreducible (see \cite[Theorem~2.6]{edixhoven92}); this implies that Assumption~\ref{asi:cong} holds. For our purposes, we prove a congruence relation between signed $p$-adic $L$-functions under Assumptions~\ref{asi:main}-\ref{asi:unram}. We then study applications to Iwasawa theory under Assumptions~\ref{asi:FL}-\ref{asi:cong}.

Our strategy is as follows. In Section~\ref{sec:1}, we recall the relationship between modular symbols and special values of the complex $L$-function associated with modular forms. We make use of Assumptions~\ref{asi:main} and \ref{asi:FL} to show that the congruence between $f$ and $g$ translates to a congruence between their respective modular symbols after choosing appropriate periods. Towards the end of the section, we recall the definition of Mazur--Tate elements. In Section~\ref{sec:2}, we review how the Mazur--Tate elements for different critical twists are used to construct the $p$-adic $L$-functions in \cite{amicevelu75,visik76,MTT}. We then explain how to decompose them into signed $p$-adic $L$-functions $L_p(f, \sharp, \omega^i, X)$ and $L_p(f, \flat, \omega^i, X)$ using the logarithm matrix introduced in \cite{BFSuper}. Furthermore, we give a detailed analysis of the integrality of these elements under Assumptions~\ref{asi:FL} and \ref{asi:unram}. 
In Section~\ref{sec:3}, we demonstrate how the congruence we obtained in Section~\ref{sec:1} trickles down to a pair of congruences:
\begin{align*}
    L_{p,\Sigma_0}(f,\sharp,\omega^i,X) &\equiv L_{p,\Sigma_0}(g,\sharp,\omega^i,X) \,\bmod\,\varpi^{r} \cdot \mathcal{O}\llbracket X\rrbracket,\\
    L_{p,\Sigma_0}(f,\flat,\omega^i,X) &\equiv L_{p,\Sigma_0}(g,\flat,\omega^i,X) \,\bmod\,\varpi^{r} \cdot \mathcal{O}\llbracket X\rrbracket,
\end{align*}
where $\Sigma_0$ is the set of primes dividing $N_fN_g$ and $L_{p,\Sigma_0}$ represents the $\Sigma_0$-imprimitive $p$-adic $L$-function. The reader is referred to Theorem~\ref{thm:cong-Lp} for a precise statement. This result allows us to deduce the following theorem on Iwasawa invariants (here, the Iwasawa invariants of $L_p(\ast,\natural,\omega^i,X)$ are denoted by $\mu(\ast,\omega^i)^\natural_\mathrm{an}$ and $\lambda(\ast,\omega^i)^\natural_\mathrm{an}$) in Section~\ref{sec:4}. 

\begin{lthm}(Theorem~\ref{thm:analytic})\label{thmA}
Let $f$ and $g$ be $p$-non-ordinary modular forms, $i\in\{0,\dots, p-2\}$ and $\natural \in \{ \sharp, \flat \}$ satisfying Assumptions~\ref{asi:FL}--\ref{asi:cong}.  Let $\Sigma_0$ be the finite set of primes dividing $N_f N_g$. Then $\mu(f, \omega^i)_\text{\rm an}^\natural=0$ if and only if $\mu(g, \omega^i)_\text{\rm an}^\natural=0$, in which case
\[ 
\lambda(f, \omega^i)_\text{\rm an}^\natural = \lambda(g, \omega^i)_\text{\rm an}^\natural + \sum_{v \in \Sigma_0} \left(\mathbf{e}_v (g, \omega^i) - \mathbf{e}_v (f, \omega^i)\right), 
\]
where each $\mathbf{e}_v(\ast,\omega^i)$ corresponds to the $\lambda$-invariant of an Iwasawa function interpolating the Euler factor of the $L$-function of $\ast$ at $v \in \Sigma_0$.
\end{lthm}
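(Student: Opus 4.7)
The plan is to derive the theorem from the congruence of $\Sigma_0$-imprimitive $p$-adic $L$-functions established in Theorem~\ref{thm:cong-Lp}, namely
\[
L_{p,\Sigma_0}(f,\natural,\omega^i,X)\equiv L_{p,\Sigma_0}(g,\natural,\omega^i,X)\bmod\varpi^r\cdot\cO\llbracket X\rrbracket,
\]
by reducing modulo $\varpi$ and exploiting Weierstrass preparation, exactly as in the ordinary case treated by Greenberg--Vatsal and Emerton--Pollack--Weston. Since the signed $p$-adic $L$-functions already lie in $\cO\llbracket X\rrbracket$ (up to scaling by a unit in $E$) under Assumptions~\ref{asi:FL} and \ref{asi:unram}, the argument becomes formally identical to the ordinary one once the congruence is in hand.

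First I would relate the imprimitive $L$-function to its primitive counterpart. By construction,
\[
L_{p,\Sigma_0}(\ast,\natural,\omega^i,X)=L_p(\ast,\natural,\omega^i,X)\cdot\prod_{v\in\Sigma_0}\mathcal{E}_v(\ast,\omega^i,X),
\]
where $\mathcal{E}_v(\ast,\omega^i,X)\in\cO\llbracket X\rrbracket$ is the Iwasawa function interpolating the Euler factor of the Hecke $L$-function of $\ast$ at $v$. Each $\mathcal{E}_v$ is a polynomial in $X$ whose constant term is a $p$-adic unit, so its $\mu$-invariant is zero and its $\lambda$-invariant is $\mathbf{e}_v(\ast,\omega^i)$ by definition. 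Denoting the imprimitive Iwasawa invariants by $\mu_{\Sigma_0}$ and $\lambda_{\Sigma_0}$, additivity of Iwasawa invariants on products gives
\[
\mu_{\Sigma_0}(\ast,\omega^i)^\natural_{\mathrm{an}}=\mu(\ast,\omega^i)^\natural_{\mathrm{an}},\qquad \lambda_{\Sigma_0}(\ast,\omega^i)^\natural_{\mathrm{an}}=\lambda(\ast,\omega^i)^\natural_{\mathrm{an}}+\sum_{v\in\Sigma_0}\mathbf{e}_v(\ast,\omega^i).
\]
Consequently it suffices to prove $\mu_{\Sigma_0}(f,\omega^i)^\natural_\mathrm{an}=0$ iff $\mu_{\Sigma_0}(g,\omega^i)^\natural_\mathrm{an}=0$, and in that case $\lambda_{\Sigma_0}(f,\omega^i)^\natural_\mathrm{an}=\lambda_{\Sigma_0}(g,\omega^i)^\natural_\mathrm{an}$, from which the theorem follows by substitution and rearrangement.

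Next I would deduce both of these facts from the mod $\varpi$ reduction of the congruence. By Weierstrass preparation, each imprimitive $L$-function can be written as $\varpi^{\mu_{\Sigma_0}}\cdot u(X)\cdot P(X)$ with $u$ a unit power series and $P$ a distinguished polynomial of degree $\lambda_{\Sigma_0}$; its reduction modulo $\varpi$ equals $\overline{u}(X)\cdot X^{\lambda_{\Sigma_0}}$ when $\mu_{\Sigma_0}=0$ and is zero otherwise. If exactly one of $\mu_{\Sigma_0}(f,\omega^i)^\natural_\mathrm{an}$ or $\mu_{\Sigma_0}(g,\omega^i)^\natural_\mathrm{an}$ vanishes, then one side of the congruence reduces to a nonzero element of $(\cO/\varpi)\llbracket X\rrbracket$ while the other reduces to zero, contradicting the congruence. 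When both vanish, the two reductions coincide and are nonzero, so their $X$-adic valuations, equal to $\lambda_{\Sigma_0}(f,\omega^i)^\natural_\mathrm{an}$ and $\lambda_{\Sigma_0}(g,\omega^i)^\natural_\mathrm{an}$, agree.

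The main obstacle in this route is, of course, the congruence Theorem~\ref{thm:cong-Lp} itself, which absorbs all the substantive work of Sections~\ref{sec:2} and~\ref{sec:3}; once that is available, the remainder of the proof is a formal Iwasawa-theoretic manipulation. A secondary point that still needs care is the claim that each Euler-factor interpolating function $\mathcal{E}_v$ has vanishing $\mu$-invariant and $\lambda$-invariant exactly $\mathbf{e}_v(\ast,\omega^i)$; this should be a routine consequence of the explicit shape of the Euler factor at primes $v\in\Sigma_0$, but must be checked to rule out cancellations that would otherwise pollute the transfer from imprimitive to primitive invariants.
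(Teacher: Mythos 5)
Your proposal follows essentially the same route as the paper: factor the $\Sigma_0$-imprimitive signed $p$-adic $L$-function as $L_p(\ast,\natural,\omega^i,X)\cdot\prod_{v\in\Sigma_0}\mathcal{E}_v(\ast,\omega^i)$, use the mod $p$ congruence of Theorem~\ref{thm:cong-Lp} together with Weierstrass preparation to transfer the vanishing of $\mu$ and the equality of $\lambda$ between the imprimitive objects, and then peel off the Euler-factor contributions. The one point to fix is your justification that $\mu(\mathcal{E}_v)=0$: the constant term of $\mathcal{E}_v$ need \emph{not} be a $p$-adic unit (and if it were, you would get $\mathbf{e}_v=0$ identically, trivializing the correction term); the correct statement is simply that not all coefficients of $\mathcal{E}_v$ are divisible by $p$, which is \cite[Lemma~3.7.4]{EPW} and is exactly what the paper invokes.
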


\noindent The latter half of Section~\ref{sec:4} is spent recalling the signed Selmer groups of $p$-non-ordinary modular forms and their respective duals, denoted by $\mathcal{X}^\natural(\ast)$, for $\natural\in\{\sharp,\flat\}$ and $\ast\in\{f,g\}$. The Iwasawa invariants of its $\omega^i$-isotypic component are denoted by $\mu(\ast,\omega^i)^\natural_\mathrm{alg}$ and $\lambda(\ast,\omega^i)^\natural_\mathrm{alg}$. We extrapolate an algebraic counterpart of Theorem~\ref{thmA}, which follows from the results in \cite{HL19}.

\begin{lthm} (Theorem~\ref{thm:algebraic})\label{thmB}
Let $f$, $g$, $\natural$, $i$ and $\Sigma_0$ be as in Theorem~\ref{thmA}.  Then, $\mathcal{X}^\natural(f)^{\omega^i}$ and $\mathcal{X}^\natural(f)^{\omega^i}$ is $\cO\llbracket X\rrbracket$-torsion. In addition, $\mu(f, \omega^i)^\natural_\text{\rm alg}=0$ if and only if $\mu(g, \omega^i)^\natural_\text{\rm alg}=0$, in which case
\[ 
\lambda(f,\omega^i)_\text{\rm alg}^\natural = \lambda(g,\omega^i)_\text{\rm alg}^\natural + \sum_{\mathfrak{v} \in \Sigma}\left( \mathbf{e}_{v}(g,\omega^i)- \mathbf{e}_{v}(f,\omega^i)\right).
\]
\end{lthm}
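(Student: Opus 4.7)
The plan is to mirror the algebraic strategy of Greenberg--Vatsal and Emerton--Pollack--Weston on the signed Selmer side, deducing Theorem~\ref{thmB} as a direct application of the general congruence machinery developed in \cite{HL19}. The pivotal step, as in the analytic case, is to pass between the primitive Selmer groups $\cX^\natural(\ast)^{\omega^i}$ and their $\Sigma_0$-imprimitive variants $\cX^\natural_{\Sigma_0}(\ast)^{\omega^i}$, defined by dropping the local Selmer conditions at primes in $\Sigma_0$. These imprimitive groups are the natural algebraic counterparts of the imprimitive $p$-adic $L$-functions compared in Theorem~\ref{thm:cong-Lp}.

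First I would establish $\cO\llbracket X\rrbracket$-torsionness of both the primitive and imprimitive signed Selmer groups. Torsionness of $\cX^\natural(\ast)^{\omega^i}$ follows from the one-sided divisibility in the signed Iwasawa main conjecture coming from Kato's Euler system together with the logarithm matrix formalism recalled in Section~\ref{sec:2}, combined with the non-vanishing hypothesis in Assumption~\ref{asi:Lp-nonzero}. The imprimitive variant then inherits torsionness via the defining short exact sequence
\[
0 \to \cX^\natural(\ast)^{\omega^i} \to \cX^\natural_{\Sigma_0}(\ast)^{\omega^i} \to \bigoplus_{v \in \Sigma_0} \cH_v(\ast)^{\omega^i} \to 0,
\]
in which each local term $\cH_v(\ast)^{\omega^i}$ is $\cO\llbracket X\rrbracket$-torsion with trivial $\mu$-invariant and $\lambda$-invariant equal to $\mathbf{e}_v(\ast,\omega^i)$, as can be read off from the Iwasawa function interpolating the Euler factor at $v$.

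Next I would exploit Assumption~\ref{asi:cong} to compare the imprimitive Selmer groups of $f$ and $g$. Because the signed local condition at $p$ is built from Wach-module data whose integral structure is rigid under Assumptions~\ref{asi:FL} and \ref{asi:unram} (Fontaine--Laffaille range over an unramified base), the residual isomorphism $T_f/\varpi T_f \cong T_g/\varpi T_g$ induces an identification of the mod $\varpi$ signed local conditions at $p$, and the tame local conditions away from $\Sigma_0$ are unchanged. Consequently, the $\varpi$-torsion of the $\omega^i$-components of $\cX^\natural_{\Sigma_0}(f)$ and $\cX^\natural_{\Sigma_0}(g)$ are isomorphic as $\cO\llbracket X\rrbracket$-modules. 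Standard control-theoretic arguments of the type carried out in \cite{HL19} then yield: $\mu_{\mathrm{alg}}^\natural = 0$ for $f$ if and only if it holds for $g$, and when this common vanishing occurs the imprimitive $\lambda$-invariants of $f$ and $g$ coincide.

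Finally, the transition formula for the primitive $\lambda$-invariants is obtained by subtracting $\sum_{v \in \Sigma_0} \mathbf{e}_v(\ast,\omega^i)$ from each side of the imprimitive equality, using the exact sequence above. The main technical obstacle is the rigidity of the signed local condition at $p$ modulo $\varpi$: one must verify that the Coleman-map/logarithm-matrix construction underlying the signed Selmer groups depends only on the residual $G_{\Qp}$-representation, so that Assumption~\ref{asi:cong} genuinely identifies the signed local conditions mod $\varpi$. The Fontaine--Laffaille hypothesis combined with unramifiedness of $E/\Qp$ makes this accessible; once it is in place, the remainder of the argument is formal and parallels the ordinary-case template of \cite{greenbergvatsal, EPW}.
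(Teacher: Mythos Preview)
Your proposal is correct and follows essentially the same route as the paper: torsionness from Assumption~\ref{asi:Lp-nonzero} via Kato (Remark~\ref{rk:cotorsion}), the $\mu$-equivalence and imprimitive $\lambda$-equality by invoking \cite[Theorem~4.6]{HL19}, and the passage from imprimitive to primitive $\lambda$-invariants via the local Euler-factor contributions recorded in \eqref{eq:Selquotient}--\eqref{eq:selmer}. One cosmetic slip: dualizing the inclusion $\Sel_p^\natural \hookrightarrow \Sel_{p,\Sigma_0}^\natural$ yields $0 \to \bigoplus_v \cH_v \to \cX^\natural_{\Sigma_0} \to \cX^\natural \to 0$ rather than the sequence you wrote, but this does not affect the additivity of Iwasawa invariants you use.
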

Together, these two theorems can be utilized to give the following application to the signed Iwasawa main conjectures (see Conjecture~\ref{IMC}).

\begin{lthm}[Theorem~\ref{thm:IMC}]\label{thmC}
    Let $f$, $g$, $\natural$, $i$ and $\Sigma_0$ be as in Theorem~\ref{thmA}. If $\mu(\ast, \omega^i)^\natural_\text{\rm alg}=\mu(\ast, \omega^i)^\natural_\text{\rm an}=0$ for $\ast\in\{f,g\}$, then the signed Iwasawa main conjecture holds for $\cX^\natural(f)^{\omega^i}$ if and only if it holds for $\cX^\natural(g)^{\omega^i}$.
\end{lthm}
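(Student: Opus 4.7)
The plan is to follow the Emerton--Pollack--Weston strategy from the ordinary case, with Theorems~\ref{thmA} and \ref{thmB} as the main inputs. The only additional ingredient needed is one divisibility of the signed Iwasawa main conjecture, namely that $(L_p(\ast,\natural,\omega^i,X))$ divides $\Char_{\cO\llbracket X\rrbracket}(\cX^\natural(\ast)^{\omega^i})$ (or the reverse, depending on convention) for both $\ast\in\{f,g\}$. Such a one-sided divisibility is the signed, non-ordinary analogue of Kato's result and should be available under Assumptions~\ref{asi:FL}--\ref{asi:cong} from the Wach-module and signed Coleman map framework developed in \cite{lei09,LLZ0}; Theorem~\ref{thmB} already asserts that $\cX^\natural(\ast)^{\omega^i}$ is $\cO\llbracket X\rrbracket$-torsion, so the characteristic ideal makes sense.

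Given this input, the argument proceeds as follows. Under the vanishing of both $\mu$-invariants for a given $\ast$, the Weierstrass preparation theorem identifies $L_p(\ast,\natural,\omega^i,X)$ and a generator of $\Char_{\cO\llbracket X\rrbracket}(\cX^\natural(\ast)^{\omega^i})$, up to units in $\cO\llbracket X\rrbracket^\times$, with distinguished polynomials of degrees $\lambda(\ast,\omega^i)^\natural_\mathrm{an}$ and $\lambda(\ast,\omega^i)^\natural_\mathrm{alg}$ respectively. In the presence of the one-sided divisibility, the signed IMC for $\ast$ then collapses to the single numerical statement $\lambda(\ast,\omega^i)^\natural_\mathrm{alg} = \lambda(\ast,\omega^i)^\natural_\mathrm{an}$. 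Subtracting the transition formula of Theorem~\ref{thmB} from that of Theorem~\ref{thmA}, the correction $\sum_{v\in\Sigma_0}(\mathbf{e}_v(g,\omega^i)-\mathbf{e}_v(f,\omega^i))$ appears identically on both sides and therefore cancels, leaving
\[
\lambda(f,\omega^i)^\natural_\mathrm{alg} - \lambda(f,\omega^i)^\natural_\mathrm{an} \;=\; \lambda(g,\omega^i)^\natural_\mathrm{alg} - \lambda(g,\omega^i)^\natural_\mathrm{an}.
\]
Hence the numerical equality holds for $f$ if and only if it holds for $g$, yielding the theorem.

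The principal obstacle is verifying that the one-sided divisibility of the signed IMC is indeed available in the generality we work in. In the ordinary setting this is a direct appeal to Kato, but in the non-ordinary setting it requires the signed splitting of Kato's Euler system through Wach modules in higher weights, together with the control of denominators addressed in Section~\ref{sec:2}. I would expect this to be invoked from the existing literature rather than reproved here. A secondary point to confirm is that the error terms $\mathbf{e}_v(\ast,\omega^i)$ appearing in Theorems~\ref{thmA} and \ref{thmB} genuinely coincide on the analytic and algebraic sides; this is the content of Theorems~\ref{thmA}--\ref{thmB} as stated, and is precisely what makes the cancellation above formal. Once these are in place, Theorem~\ref{thmC} follows as a clean assembly of Theorems~\ref{thmA}, \ref{thmB}, and the Weierstrass reduction.
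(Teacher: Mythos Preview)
Your proposal is correct and follows essentially the same route as the paper: invoke one divisibility of the signed main conjecture (the paper packages this as Proposition~\ref{prop:one-inclusion}, deduced from \cite[Corollary~6.8]{LLZ0}), reduce the conjecture under $\mu=0$ to an equality of $\lambda$-invariants, and then use the identical correction terms in Theorems~\ref{thmA} and~\ref{thmB} to conclude that the two numerical equalities are equivalent.

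One small point you glossed over: in the paper's formulation of Conjecture~\ref{IMC} the analytic side is $L_p(\ast,\natural,\omega^i,X)/\xi_{i,\natural}$ rather than $L_p(\ast,\natural,\omega^i,X)$ itself, so the numerical criterion is $\lambda(\ast,\omega^i)^\natural_{\mathrm{an}}-\deg\xi_{i,\natural}=\lambda(\ast,\omega^i)^\natural_{\mathrm{alg}}$ rather than a bare equality. This does not affect your argument, since $\xi_{i,\natural}$ depends only on $k$ and $p$ and is therefore the same for $f$ and $g$; the shift cancels in exactly the same way the $\mathbf{e}_v$ terms do.
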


Similar questions were studied in \cite{KLP}, where the authors showed that modules involving Kato's zeta elements that are used to formulate the Iwasawa main conjecture without $p$-adic $L$-functions behave well under congruences. In particular, they proved that the Iwasawa invariants of these modules satisfy similar relations to those given by Theorems~\ref{thmA} and \ref{thmB} when $f$ and $g$ have isomorphic residual representations. Furthermore, they obtained applications to the signed Iwasawa main conjectures, in a similar manner as Theorem~\ref{thmC}. Our method is different, and we work under different hypotheses. Finally, we remark that in the special case of elliptic curves, Theorem~\ref{thmB} was already established by Kim in \cite{kim09AJM}. Other similar results were studied in \cite{kim21,choikim,raysujatha,hamidi,AhmedLim,ponsinet20,HLV} in various contexts.


\subsection{Notation}

Let $\mu_{p^n}$ be the multiplicative group of $p^n$-th roots of unity. For $n \geq 1$, we set $\mathcal{G}_n = \Gal(\QQ(\mu_{p^n})/\QQ)$. The fields $\QQ(\mu_{p^n})$ form a tower giving $\QQ_\infty = \QQ(\mu_{p^\infty}) = \bigcup_n \QQ(\mu_{p^n})$ with Galois group 
\[
\mathcal{G}_\infty \cong \Zp^\times \cong \Delta \times G_\infty,
\]
where $\Delta=\Gal(\QQ(\mu_p)/\QQ) \cong (\ZZ/p\ZZ)^\times$ and $G_\infty=\Gal(\QQ(\mu_{p^\infty})/\QQ(\mu_p))\cong \Zp$. We denote by $\QQ_\text{\rm cyc}$ the cyclotomic $\Zp$-extension of $\QQ$ with Galois group isomorphic to $G_\infty$. 
We define the Iwasawa algebra of $G_\infty$ as $\Lambda = \mathcal{O}\llbracket G_\infty \rrbracket = \varprojlim \cO[G_n]$, where $G_n=\Gal(\QQ(\mu_{p^{n+1}})/\QQ(\mu_p))\cong\ZZ/p^n\ZZ$. Let $\gamma$ be a topological generator of $G_\infty$. One can view the Iwasawa algebra $\cO\llbracket G_\infty\rrbracket$ as a power series ring $\mathcal{O}\llbracket X \rrbracket$ by sending $\gamma$ to $1 + X$. This induces an isomorphism $\cO[G_n] \cong \cO\llbracket X \rrbracket / \left( (1 + X)^{p^n} - 1 \right)$. We define the Iwasawa algebra $\cO\llbracket \mathcal{G}_\infty \rrbracket$ similarly. This in turn can be identified with the power series ring $\cO[\Delta]\llbracket X \rrbracket$.

Let $\omega: \Gal(\QQ(\mu_p)/\QQ) \to \Zp^\times$ be the Teichm\"{u}ller character. The group $\Hom(\Delta, \Zp^\times)$ consists of $\omega^i$ for $0 \leq i \leq p-2$. Let $\theta \in \Hom(\Delta, \Zp^\times)$. We write $e_\theta = \tfrac{1}{p-1} \sum_{\tau \in \Delta} \theta(\tau) \cdot \tau^{-1}$ for the corresponding idempotent element of $\cO[\Delta]$. Given an $\cO\llbracket \mathcal{G}_\infty \rrbracket$-module $M$, we denote by $M^\theta = e_\theta \cdot M$ the $\theta$-isotypic component of $M$. 

\subsection*{Acknowledgement}
The authors thank Daniel Delbourgo, Rylan Gajek-Leonard, Jeffrey Hatley and Chan-Ho Kim for interesting discussions during the preparation of this article. AL's research is supported by the NSERC Discovery Grants Program RGPIN-2020-04259. Parts of this work were carried out during RC's visit to the University of Ottawa in 2024 through the VRS scheme. On behalf of all authors, the corresponding author states that there is no conflict of interest. This manuscript has no associated data.


\section{Review of modular symbols and Mazur--Tate elements}\label{sec:1}

\subsection{Modular symbols attached to cusp forms}\label{1.1}
We review the definition of modular symbols attached to $f$; the same discussion applies to $g$.

 Let $\textbf{D} = \Div(\mathbb{P}^1(\QQ))$ be the free abelian group of divisors supported on the rational cusps $\mathbb{P}^1(\QQ)$ and let $\textbf{D}^0 \subseteq \textbf{D}$ be its subgroup of degree 0 divisors. Then $\textbf{D}^0$ is generated by the symbols $[r] - [s]$ where $r,s\in\mathbb{P}^1(\QQ)$. Let $R$ be a commutative ring with unity and let $\Gamma$ be a congruence subgroup. Let $A$ be a right $R[\Gamma]$-module. We call an additive homomorphism $\xi: \textbf{D}^0 \to A$ that satisfies $\xi(\gamma D) | \gamma = \xi(D)$ for all $\gamma \in \Gamma$ an \emph{$A$-valued modular symbol}. We denote by $\text{\rm Symb}(\Gamma, A)$ the $R$-module of modular symbols.

The group of matrices $\GL_2(R)$ acts on the space $L_{k-2}(R)$ of degree $k-2$ homogeneous polynomials defined over $R$ in the indeterminates $X$ and $Y$ as follows: for $\gamma = \begin{smallpmatrix}
    a & b\\ c & d
\end{smallpmatrix} \in \GL_2(R)$,
\begin{equation}\label{actiononpoly}
    P(X,Y) | \gamma := P\left( (X, Y) \gamma^\iota \right) = P(dX - cY, -bX + aY).
\end{equation}
Consider a cusp form $f \in S_k(\Gamma)$. One may view $[r]-[s]$ as any simple path $s \to r$ in the upper half plane which allows us to construct a $\CC$-valued modular symbol
\begin{equation}\label{eq:modsymbc}
  \xi_f: [r] - [s] \mapsto 2\pi \sqrt{-1}\int_s^r f(z)(zX+Y)^{k-2}\,\rd z
\end{equation}
in $\text{\rm Symb}(\Gamma, L_{k-2}(\CC)) := \Hom_{\Gamma}(\textbf{D}^0, L_{k-2}(\CC))$. 

The modular symbol $\xi_f$ encodes the special values of the $L$-function $L(f,s)$ associated with $f$. For a positive integer $D$, let $\chi: (\ZZ/D\ZZ)^\times \to \CC^\times$ be a Dirichlet character. It is well-known that the $L$-function of the twisted form $f_{\overline{\chi}} = \sum \overline{\chi}(n) a_nq^n$ can be written as
\[ 
L(f, \overline{\chi}, s) = \frac{(-2\pi \sqrt{-1})^s}{\Gamma(s) \tau(\chi)} \sum_{a \in (\ZZ/D\ZZ)^\times} \chi(a) \int_{a/D}^\infty f(z)\left( z - \frac{a}{D} \right)^{s-1}\rd z,
 \]
where $\Gamma(s)$ denotes the Gamma function and $\tau(\chi)$ is the Gauss sum given by $\sum_{a\in(\ZZ/D\ZZ)^\times} \chi(a) e^{2\pi \sqrt{-1} a/D}$. The action in \eqref{actiononpoly} results in
\[
\xi_f \bigg| \begin{pmatrix}
    1 & a\\ 0 & D
\end{pmatrix} (\{\infty\} - \{0\}) 
= 2 \pi \sqrt{-1} \int_{a/D}^\infty f(z)(D zX + (-aX + Y))^{k-2}\rd z. 
\]
Setting $(X,Y) = (1,1)$, the right-hand side becomes
\[ 
2 \pi \sqrt{-1} \sum_{j=0}^{k-2} \binom{k-2}{j}\,D^j \int_{a/D}^\infty f(z)\left(z - \frac{a}{D} \right)^j\rd z.
 \]
For $0 \leq j \leq k-2$, let $\xi_{f,j}(a,D)$ be the projection of the above expression to the $j$-th exponent. We have:
\begin{equation}\label{eq:specialvalue}
L(f, \overline{\chi}, j+1) = \frac{(-2\pi \sqrt{-1})^{j}}{j! \tau(\chi)} \cdot \frac{1}{D^j} \cdot \binom{k-2}{j}^{-1} \sum_{a \in (\ZZ/D\ZZ)^\times} \chi(a)\,\xi_{f,j}(a,D).
\end{equation}


\subsection{The Eichler-Shimura isomorphism}\label{sec:1.2}
Let $\Gamma = \Gamma_1(N)$, where $N \geq 4$ is a common multiple of $N_f$ and $N_g$ so that $\Gamma$ is a discrete torsion-free subgroup of $\text{\rm SL}_2(\mathbb{Z})$. 

For a $\Gamma$-module $A$, let $H^q(\Gamma, A)$ be the $q$-th cohomology group and denote by $H_c^q(\Gamma, A)$ those with compact support. We define the parabolic cohomology group $H_P^1(\Gamma, A)$ as the natural image of $H_c^q(\Gamma, A) \to H^q(\Gamma, A)$. Let $R$ be a ring as defined above and set $A = L_{k-2}(R)$.  It follows from \cite[Proposition~4.2]{ashstevens} that there is a canonical isomorphism
\[ 
H_c^1(\Gamma, L_{k-2}(R)) \cong \text{\rm Symb}(\Gamma, L_{k-2}(R)),
 \]
which results in an exact sequence
\begin{equation}\label{eq:exseq1}
S_k(\Gamma) \stackrel{f \mapsto \xi_f}{\longrightarrow} \text{\rm Symb}(\Gamma, L_{k-2}(\CC)) \stackrel{\sim}{\longrightarrow} H_c^1(\Gamma, L_{k-2}(\CC)) \longrightarrow H^1(\Gamma, L_{k-2}(\CC)).
\end{equation}
The following theorem is commonly referred to as the Eichler-Shimura isomorphism. It relates the $\CC$-vector space of modular forms to cohomology groups, allowing us to analyze the former using techniques in cohomology theory.

\begin{theorem}\label{thm:eichlershimura}
There is an isomorphism 
\[ H_P^1(\Gamma, L_{k-2}(\CC)) \cong S_k(\Gamma) \oplus S_k(\Gamma)^c \]
which extends to
\begin{equation}\label{eq:eichshi2}
H^1(\Gamma, L_{k-2}(\CC)) \cong M_k(\Gamma) \oplus S_k(\Gamma)^c
\end{equation}
where $c $ denotes the complex conjugation operator. 
\end{theorem}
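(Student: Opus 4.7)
The plan is to construct an explicit Eichler--Shimura map by sending a holomorphic cusp form to its period modular symbol and an antiholomorphic form to its conjugate, then verify the resulting map is an isomorphism via the Petersson pairing together with a dimension count. For $f \in S_k(\Gamma)$, I would assign the class of the modular symbol $\xi_f$ from \eqref{eq:modsymbc}; cuspidality of $f$ ensures convergence of the defining integrals and that $\xi_f$ lies in the parabolic subspace. For $\bar g \in S_k(\Gamma)^c$, define the analogous antiholomorphic symbol using the differential form $\overline{g(z)}(\bar z X + Y)^{k-2}\rd\bar z$. Together this produces a $\CC$-linear map
\[
\Phi \colon S_k(\Gamma) \oplus S_k(\Gamma)^c \longrightarrow H_P^1(\Gamma, L_{k-2}(\CC)).
\]

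For injectivity, suppose $\Phi(f, \bar g)$ is a coboundary. Pairing against a suitable closed form and applying Stokes' theorem on a fundamental domain for $\Gamma \backslash \mathbb{H}$, the boundary terms vanish thanks to parabolicity, and the interior integral reduces to a non-zero constant multiple of the Petersson inner product $\langle f, g \rangle$. Non-degeneracy of the Petersson pairing then forces $f = g = 0$. Surjectivity follows from the dimension equality $\dim_\CC H_P^1(\Gamma, L_{k-2}(\CC)) = 2 \dim_\CC S_k(\Gamma)$, which I would establish by computing the Euler characteristic of the local system attached to $L_{k-2}(\CC)$ on the modular curve $X(\Gamma)$ and invoking Serre duality to identify $H^0(X(\Gamma), \Omega^1 \otimes \mathrm{Sym}^{k-2})$ with $S_k(\Gamma)$.

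To extend to the full statement \eqref{eq:eichshi2}, analyze the cokernel $H^1(\Gamma, L_{k-2}(\CC))/H_P^1(\Gamma, L_{k-2}(\CC))$. The long exact sequence in cohomology relating $\Gamma$ to its parabolic subgroups $\Gamma_c$ at cusps $c$ identifies this cokernel with the direct sum of coinvariants $\bigoplus_c L_{k-2}(\CC)_{\Gamma_c}$. I would then match these boundary contributions bijectively with the Eisenstein subspace $E_k(\Gamma) \cong M_k(\Gamma)/S_k(\Gamma)$ via regularized Eichler integrals, noting that only holomorphic Eisenstein classes survive in this quotient since the antiholomorphic information at each cusp is already encoded by its holomorphic counterpart. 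Combining this matching with the parabolic isomorphism yields $H^1(\Gamma, L_{k-2}(\CC)) \cong M_k(\Gamma) \oplus S_k(\Gamma)^c$.

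The main obstacle will be the Eisenstein part of the correspondence: the period integrals defining modular symbols for Eisenstein series fail to converge absolutely and require a regularization procedure, and one must carefully verify that the resulting boundary classes are in bijection with $E_k(\Gamma)$ without double counting against the antiholomorphic copy. By contrast, the cusp form portion is essentially formal once the Petersson pairing and Serre duality inputs are in place: the rapid decay at cusps makes both the Stokes' theorem argument and the dimension count routine. The genuine difficulty lies in handling the non-compactness of $Y(\Gamma)$ when passing from $H_P^1$ to $H^1$.
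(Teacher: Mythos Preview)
Your proposal is essentially correct and follows the classical route to the Eichler--Shimura isomorphism, but it is considerably more elaborate than what the paper does. The paper's proof is not really a proof at all: it observes that $S_k(\Gamma)\otimes_{\mathbb{R}}\CC\cong S_k(\Gamma)\oplus S_k(\Gamma)^c$, gestures at the exact sequence~\eqref{eq:exseq1}, and then defers the entire argument to Hida's book \cite[\S6.2]{hidabook}, invoking only a ``dimension formula argument'' for the extension to $M_k(\Gamma)$. In other words, the paper treats the theorem as a well-known black box from the literature.

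Your approach, by contrast, reconstructs the isomorphism from scratch: explicit period maps for holomorphic and antiholomorphic forms, injectivity via the Petersson pairing and Stokes, surjectivity by an Euler-characteristic and Serre-duality dimension count, and the boundary analysis at cusps for the Eisenstein extension. This is the standard textbook proof (essentially what Hida does), and it is good that you identify the Eisenstein regularization as the genuine subtlety. One small caution: the map from $H^1/H_P^1$ into $\bigoplus_c L_{k-2}(\CC)_{\Gamma_c}$ is a priori only an injection, not an identification; you recover the full statement after matching dimensions with $E_k(\Gamma)$, so be careful not to claim equality before the dimension count is in hand. Otherwise your outline is sound, and what it buys over the paper's citation is self-containment---at the cost of substantially more work than the authors intended to include.
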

\begin{proof}
Note that $S_k(\Gamma) \otimes_\mathbb{R} \CC \cong S_k(\Gamma) \oplus S_k(\Gamma)^c$. One can then read off the first isomorphism from the exact sequence \eqref{eq:exseq1}. As for the second isomorphism, we recall the decomposition of $M_k(\Gamma) \cong S_k(\Gamma) \oplus E_k(\Gamma)$ into the space $S_k(\Gamma)$ of cusp forms and the space $E_k(\Gamma)$ of Eisenstein series. The claim follows using a dimension formula argument; see \cite[\S6.2]{hidabook} for details.
\end{proof}

We show that given $f$ and $g$ satisfying Assumptions~\ref{asi:main} and \ref{asi:FL} as in the introduction, one can observe a congruence between their associated modular symbols. To do so, we first bring the discussion to an integral setting.

\begin{lemma} \label{lem:tensor}
If $A$ is an $R$-flat algebra, then $H_P^1(\Gamma, L_{k-2}(\CC)) = H_P^1(\Gamma, L_{k-2}(A)) \otimes_R \CC$. 
\end{lemma}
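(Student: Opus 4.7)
The plan is to deduce the identity from flat base change for group cohomology, applied separately to $H^1_c$ and $H^1$ and then to their comparison map. Writing $B=\CC$, viewed as an $R$-algebra via the fixed embedding, the natural $\Gamma$-equivariant identification $L_{k-2}(A)\otimes_R B\cong L_{k-2}(A\otimes_R B)$ reduces the statement to showing that $H^1_P(\Gamma,M)\otimes_R B\cong H^1_P(\Gamma,M\otimes_R B)$ for any $R[\Gamma]$-module $M$ and any flat $R$-algebra $B$, applied to $M=L_{k-2}(A)$.

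For ordinary cohomology, I would fix a free $R[\Gamma]$-resolution $P_\bullet\to R$, realize $H^q(\Gamma,M)$ as the cohomology of the cochain complex $\Hom_{R[\Gamma]}(P_\bullet,M)$, and observe that since $B$ is $R$-flat, the functor $-\otimes_R B$ commutes with cohomology of complexes of $R$-modules. This yields the base change isomorphism for $H^q(\Gamma,-)$. For compactly supported cohomology, I would use the fact that $H^q_c(\Gamma,M)$ can be computed topologically as the relative cohomology of the open modular curve $Y(\Gamma)$ modulo a deleted neighborhood of the cusps (equivalently, via the Borel--Serre compactification), producing an analogous complex of $R$-modules to which the same flatness argument applies.

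Finally, the parabolic cohomology is by definition the image of the comparison map $H^1_c(\Gamma,M)\to H^1(\Gamma,M)$, and images are preserved by the exact functor $-\otimes_R B$; combining this observation with the two flat base change isomorphisms above gives the claim. The main subtlety is setting up a model for $H^1_c$ that is manifestly compatible with flat base change at the cochain level — once such a description is in place (e.g.\ via Borel--Serre or via cochains vanishing near the cusps), the rest of the argument is a formal consequence of flatness.
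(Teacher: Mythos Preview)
The paper does not give its own proof of this lemma; it simply cites \cite[\S6.2 (1b), p.~168]{hidabook}. Your outline is the standard flat base-change argument and is essentially how one proves the cited fact.

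One technical point you should make explicit: for the identification
\[
\Hom_{R[\Gamma]}(P_\bullet,M)\otimes_R B \;\cong\; \Hom_{R[\Gamma]}(P_\bullet,M\otimes_R B)
\]
to hold, each $P_n$ must be \emph{finitely generated} free over $R[\Gamma]$, not merely free. This is available here because $\Gamma$, being a torsion-free congruence subgroup, has a finite classifying CW complex (equivalently, the open modular curve $Y(\Gamma)$ with its Borel--Serre boundary has the homotopy type of a finite complex). The same finiteness makes your topological model for $H^1_c$ work. With that caveat in place, your argument --- flat base change for $H^1$ and $H^1_c$ separately, followed by the observation that images commute with the exact functor $-\otimes_R B$ --- is correct.

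A minor remark on the statement itself: as written in the paper the roles of $R$, $A$, and $\CC$ are somewhat imprecise (one needs $\CC$ to be a flat $A$-algebra, and the tensor should really be over $A$). Your rephrasing in terms of an arbitrary flat $R$-algebra $B$ and a general $R[\Gamma]$-module $M$ is cleaner and is exactly what the application requires.
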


\noindent Lemma~\ref{lem:tensor} is proved in \cite[\S6.2 (1b) on p.168]{hidabook}. Let $E$ be as in the introduction. Lemma~\ref{lem:tensor}, combined with Theorem~\ref{thm:eichlershimura}, yields
\begin{align}\label{eq:eichshiO}
\begin{split}
H_P^1(\Gamma, L_{k-2}(E)) \otimes \CC &= H_P^1(\Gamma, L_{k-2}(\CC))\\
	&\cong S_k(\Gamma) \oplus S_k(\Gamma)^c\\
	&\cong (S_k(\Gamma, E) \otimes \CC) \oplus (S_k(\Gamma, E)^c \otimes \CC)\\
	&= \left (S_k(\Gamma, E) \oplus S_k(\Gamma, E)^c \right) \otimes \CC,
\end{split}
\end{align}
where the last equality follows from \cite[\S6.3, Theorem 2]{hidabook}. 

\begin{remark}\label{CtoO}
\textup{Let $f \in S_k(\Gamma, \mathcal{O})$. The $L_{k-2}(\CC)$-valued modular symbol $\xi_f$ can be decomposed as $\xi_f^++\xi_f^-$, where  $\xi_f^\pm$ belongs to the $\pm1$-eigenspace of the complex conjugation. 
Lemma~\ref{lem:tensor} implies that we can divide $\xi_f^\pm$ by a complex number $\Omega_f^\pm$ to obtain an $L_{k-2}(E)$-valued modular symbol $\varphi_f^\pm$. In fact, we can choose $\Omega_f^\pm$ so that $\varphi_f^\pm \in H_P^1(\Gamma, L_{k-2}(\cO))^\pm$. We refer to these numbers as \emph{cohomological periods}, which are well-defined up to units in $\cO$ (see \cite[Definition~2.1 and Remark~2.2]{pollack-weston11}). }
\end{remark}

Let $\textbf{T}_k(\Gamma, \mathcal{O})$ be the $\cO$-algebra generated by all Hecke operators $T_\ell$ acting faithfully on the space of modular forms $M_k(\Gamma, \mathcal{O})$ and let $\textbf{t}_k(\Gamma, \mathcal{O})$ be its quotient acting on cusp forms $S_k(\Gamma, \mathcal{O})$. It is well known that these algebras act on the torsion-free part of $H^1(\Gamma, L_{k-2}(\mathcal{O}))$ and $H_P^1(\Gamma, L_{k-2}(\mathcal{O}))$, respectively. The bilinear pairing
\[
S_k(\Gamma, \mathcal{O}) \times \textbf{t}_k(\Gamma, \mathcal{O}) \to \mathcal{O}
\]
defined by $(T_n, f) = a_1(f|T_n)$ induces a duality $\Hom_\mathcal{O}(\textbf{t}_k(\Gamma, \mathcal{O}),\cO) \cong S_k(\Gamma, \mathcal{O})$. The eigenforms $f$ and $g$ give rise to a group homomorphism
\[
\lambda_f, \lambda_g: \textbf{t}_k(\Gamma, \mathcal{O}) \to \mathcal{O}.
\]
 As $f$ and $g$ satisfy Assumption~\ref{asi:main}, the compositions of $\lambda_f$ and $\lambda_g$ with the reduction map modulo $\varpi$ coincide. We denote their common kernel by $\mathfrak{m}$. This is a maximal ideal in $\textbf{t}_k(\Gamma, \mathcal{O})$ and it determines an identical semisimple representation for $f$ and $g$
$$\rho_\mathfrak{m}: \Gal(\overline{\QQ}/\QQ) \to \GL_2(\textbf{t}_k(\Gamma, \mathcal{O})/\mathfrak{m})$$
that is unramified outside $Np$, such that for all primes $\ell \nmid Np$,
\begin{gather}
\begin{cases}
    \text{\rm Tr}(\rho(\Frob_\ell)) &\hspace{-0.3cm}= T_\ell \bmod \mathfrak{m}\\
    \det(\rho(\Frob_\ell)) &\hspace{-0.3cm}= \ell\langle\ell\rangle \bmod \mathfrak{m}.
\end{cases}
\end{gather}

Let $f=\sum_{n\ge1} a_n q^n$ and $g=\sum_{n\ge1} b_n q^n$ be normalized eigenforms as in the introduction. Consider any finite set of primes $\Sigma$ and let
\[
F := f_\Sigma = \sum_{n\ge1} a_n' q^n \qquad\text{and}\qquad G := g_\Sigma = \sum_{n\ge1} b_n' q^n,
\]
where $a_n' = b_n' = 0$ for integers $n$ divisible by primes in $\Sigma$, and $a_n'=a_n$, $b_n'=b_n$ otherwise. We call these the \emph{$\Sigma$-depletion} of $f$ and $g$ respectively.

 Let $v$ be a prime and define
\begin{equation}
\cP_v(f, X) = \det\left(1 - X \cdot \Frob^{-1}_v \Big| V_\ell(f)^{\mathcal{I}_{\QQ_v}} \right), \qquad (\ell \neq v).\label{eq:Euler}    
\end{equation}
Here, $V_\ell(f)$ is the $\ell$-adic $G_\QQ$-representation of Deligne, $\Frob_v$ denotes the arithmetic Frobenius at $v$ and $\mathcal{I}_{\QQ_v}$ is the inertia group. The complex $L$-function of $f$ is given by
\[
L(f,s) = \prod_{v \mid N_f} (1 - a_v(f) v^{-s})^{-1} \times \prod_{v \nmid N_f} \left( 1 - a_v(f) v^{-s} + \varepsilon_f(v) v^{k-1-2s} \right)^{-1}=\prod_v \cP_v(f,v^{-s})^{-1}
\]
for $\mathrm{Re}(s)\gg0$.
We define the \emph{$\Sigma$-imprimitive} $L$-function of $f$ as the usual $L$-function with the Euler factors removed at all primes $v \in \Sigma$
\[
L_{\Sigma}(f,s) = L(f,s) \times \prod_{v \in \Sigma} \cP_v(f, v^{-s}).
\]
We define $L_{\Sigma}(g,s)$ analogously.

Let $\Sigma_0$ be the set of primes dividing $N_f N_g$. The Hecke eigenvalues of the $\Sigma_0$-depletions $F = f_{\Sigma_0}$ and $G = g_{\Sigma_0}$ agree on every positive integer $n$. Let $M$ be an integer so that $F$ and $G$ are both modular forms of level $M$. Note that $M$ is divisible only by primes in $\Sigma_0$ because depleting a modular form, say at a prime $q$, increases the level by at most $q^2$. We henceforth set $\Gamma = \Gamma_1(M)$. 

The main result of this section is that if Assumptions~\ref{asi:main} and \ref{asi:FL} hold, then the special $L$-values of $F$ and $G$ are congruent modulo $\varpi^r$. Let $\xi_F$ and $\xi_G$ be the modular symbols defined by \eqref{eq:modsymbc}. As in Remark~\ref{CtoO}, we normalize them by their cohomological periods to define the $L_{k-2}(\mathcal{O})$-valued modular symbols $\varphi_F^\pm$ and $\varphi_G^\pm$ satisfying
\[ 
\varphi^\pm_\ast = \frac{\xi^\pm_\ast}{\Omega^\pm_\ast}, \quad\ast \in \{ F, G \}.
\]
We show that $\varphi^+_F$ and $\varphi^-_F$ are congruent to $\varphi^+_G$ and $\varphi^-_G$ modulo $\varpi^r$, respectively. To prove this, we will be needing the following multiplicity one result which is a consequence of \cite[Theorem~2.1]{FJ95} provided $p \nmid M$ and $p > k$.
\begin{equation}\label{eq:mult-one}
	H_P^1(\Gamma, L_{k-2}(\cO))^\pm_\mathfrak{m} \cong S_k(\Gamma, \cO)_\mathfrak{m}
\end{equation} 

The congruence class of $F$ and $G$ determines a maximal ideal $\mathfrak{m} \subset \textbf{t}_k(\Gamma, \mathcal{O})$ so that $F$ and $G$ lie in the same class in $S_k(\Gamma, \mathcal{O})_\mathfrak{m}$. Applying \cite[(6.7)]{greenbergstevens} with $A = \mathcal{O}$ and localizing at $\mathfrak{m}$, we deduce the following short exact sequence
\[ 0 \to \text{\rm B}(\Gamma, L_{k-2}(\mathcal{O}))_\mathfrak{m} \to \text{\rm Symb}(\Gamma, L_{k-2}(\mathcal{O}))_\mathfrak{m} \stackrel{\delta}{\to} H_P^1(\Gamma, L_{k-2}(\mathcal{O}))_\mathfrak{m} \to 0. \] 
The modular symbols $\varphi^\pm_\ast$ are sent to  $\delta(\varphi^\pm_\ast) \in H_P^1(\Gamma, L_{k-2}(\mathcal{O}))^\pm_\mathfrak{m}$ and since $F \equiv G\,\bmod\,\varpi^r S_k(\Gamma, \cO)_\mathfrak{m}$, we have $\delta(\varphi_F^\pm) \equiv \delta(\varphi_G^\pm)\,\bmod\,\varpi^r H_P^1(\Gamma, L_{k-2}(\cO))_\mathfrak{m}$. This congruence lifts naturally to $\varphi_\ast^\pm$ in $\text{\rm Symb}(\Gamma, L_{k-2}(\mathcal{O}))_\mathfrak{m}$. This is because the action of Hecke operators on the boundary symbols $\text{\rm B}(\Gamma, L_{k-2}(\mathcal{O}))_\mathfrak{m}$ are well-known to be Eisenstein, and the irreducibility of $\rho_\mathfrak{m}$ guarantees that the maximal ideal $\mathfrak{m}$ is non-Eisenstein. 

Therefore, we have the following congruence between modular symbols:
\begin{equation}\label{eq:congvarphi}
\sum_{a \in (\ZZ/D\ZZ)^\times} \varphi_F \bigg| \begin{pmatrix} 1 & -a\\ 0 & p^n \end{pmatrix} ( \{\infty\} - \{0\} ) \equiv \sum_{a \in (\ZZ/D\ZZ)^\times} \varphi_G \bigg| \begin{pmatrix} 1 & -a\\ 0 & p^n \end{pmatrix} ( \{\infty\} - \{0\} )\,\bmod\,\varpi^{r}.
\end{equation}
Since $p \nmid M$ and $p > k$, we can use Ihara's Lemma to relate the periods of the modular forms $f$ and $g$ to the periods of their respective $\Sigma_0$-depletions $F$ and $G$ by $\Omega_f^\pm = u_f^\pm \cdot \Omega_F^\pm$ and $\Omega_g^\pm = u_g^\pm \cdot \Omega_G^\pm$, where $u_f^\pm$, $u_g^\pm$ are $p$-adic units. We refer the reader to \cite[Proposition~1.4(c)]{DFG04} for the full statement of Ihara's Lemma. Without loss of generality, we can take $u_f^\pm=u_g^\pm=1$.

\begin{proposition}\label{prop:L-cong}
Let $\chi: (\ZZ/D\ZZ)^\times \to \CC^\times$ be a Dirichlet character and suppose $f$ and $g$ satisfy Assumptions~\ref{asi:main} and \ref{asi:FL}. Let $\Sigma_0 $ be the set of primes dividing $N_fN_g$. Then there exist complex numbers $\Omega_f^\pm$ and $\Omega_g^\pm$ such that 
\[ 
\frac{j!\tau(\chi)}{(-2\pi \sqrt{-1})^j} \cdot \frac{D^j}{\Omega_f^\pm} \cdot \binom{k-2}{j} \cdot L_{\Sigma_0}(f, \overline{\chi}, j+1) \equiv \cdot \frac{j!\tau(\chi)}{(-2\pi \sqrt{-1})^j} \cdot \frac{D^j}{\Omega_g^\pm} \cdot \binom{k-2}{j} \cdot L_{\Sigma_0}(g, \overline{\chi}, j+1)\,\bmod\,\varpi^r.
 \]
\end{proposition}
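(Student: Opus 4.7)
The plan is to combine the special-value formula \eqref{eq:specialvalue} with the modular-symbol congruence \eqref{eq:congvarphi} established earlier in this section, and then to pass from the $\Sigma_0$-depletions $F = f_{\Sigma_0}$, $G = g_{\Sigma_0}$ back to the original forms $f$, $g$ via Ihara's lemma.

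First, I would apply \eqref{eq:specialvalue} to $F$ (and analogously to $G$). Since $F$ has the same Fourier coefficients as $f$ outside $\Sigma_0$ and vanishing coefficients at primes of $\Sigma_0$, the standard Dirichlet series manipulation gives $L(F, \overline\chi, s) = L_{\Sigma_0}(f, \overline\chi, s)$. Decomposing both sides of \eqref{eq:specialvalue} along the $\pm 1$-eigenspaces of complex conjugation on modular symbols and retaining the component matching $\chi(-1) = \pm 1$ replaces $\xi_F$ by $\xi_F^\pm$; dividing through by the period $\Omega_F^\pm$ (cf.\ Remark~\ref{CtoO}) converts $\xi_F^\pm$ into the $L_{k-2}(\mathcal{O})$-valued modular symbol $\varphi_F^\pm$. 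This exhibits the left-hand side of the claimed congruence as the $j$-th coefficient (in the variables $X, Y$) of
\[
\sum_{a \in (\ZZ/D\ZZ)^\times} \chi(a)\, \varphi_F^\pm \bigg|\begin{pmatrix} 1 & a \\ 0 & D \end{pmatrix}(\{\infty\} - \{0\}) \;\in\; L_{k-2}(\mathcal{O}),
\]
and the analogous identity expresses the right-hand side of the congruence in terms of $\varphi_G^\pm$ and $\Omega_G^\pm$.

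Next, I would invoke the congruence \eqref{eq:congvarphi} with the modulus $D$ in place of $p^n$ --- the argument establishing \eqref{eq:congvarphi} is level-uniform, relying only on mod-$\varpi^r$ equality in $S_k(\Gamma, \mathcal{O})_{\mathfrak{m}}$, the mod-$p$ multiplicity-one statement \eqref{eq:mult-one}, and the Eisenstein-versus-non-Eisenstein analysis of boundary symbols, none of which depend on the modulus being a prime power. Multiplying through by $\chi(a) \in \mathcal{O}$ (after enlarging $\mathcal{O}$ if needed so that it contains the values of $\chi$) and summing over $a$ gives the desired congruence between the two displays modulo $\varpi^r$, from which extracting the $j$-th coefficient yields the claimed congruence of scaled special values, albeit with $\Omega_F^\pm$ and $\Omega_G^\pm$ in the denominators. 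Ihara's lemma \cite[Proposition~1.4(c)]{DFG04}, which applies because $p \nmid M$ and $p > k$ by Assumption~\ref{asi:FL}, then identifies $\Omega_f^\pm$ with $\Omega_F^\pm$ up to a $p$-adic unit and similarly for $g$; absorbing these units lets us set $\Omega_f^\pm := \Omega_F^\pm$ and $\Omega_g^\pm := \Omega_G^\pm$, exactly as anticipated in the discussion preceding the proposition.

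The main obstacle I anticipate is the bookkeeping required to align the various twists and projections: the change of variables $a \leftrightarrow -a$ (which introduces a factor $\chi(-1) = \pm 1$ that must match the eigenspace sign), the projection to the $j$-th exponent in the polynomial action \eqref{actiononpoly}, and the combinatorial factors $D^j$ and $\binom{k-2}{j}$ arising from the binomial expansion of $(DzX + (-aX + Y))^{k-2}$, must all combine to produce precisely the normalization displayed in the proposition. Once this tracking is carried out carefully, the conclusion follows formally from \eqref{eq:congvarphi}.
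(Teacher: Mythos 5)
Your proposal is correct and follows essentially the same route as the paper: the paper's proof is precisely "combine \eqref{eq:specialvalue} with \eqref{eq:congvarphi}," with the surrounding work (multiplicity one, the non-Eisenstein boundary-symbol argument, and Ihara's lemma to identify $\Omega_f^\pm$ with $\Omega_F^\pm$) carried out in the discussion immediately preceding the proposition, exactly as you reconstruct it. Your observation that \eqref{eq:congvarphi} holds for a general modulus $D$ (not just $p^n$) is the correct reading of that congruence, and the sign/eigenspace bookkeeping you flag is the only remaining routine verification.
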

\begin{proof}
This follows from combining \eqref{eq:specialvalue} and \eqref{eq:congvarphi}. 
\end{proof}


\subsection{Mazur--Tate elements} \label{1.3}
Recall that $\mathcal{G}_n = \Gal(\QQ(\mu_{p^n})/\QQ)$. There is an isomorphism
$\mathcal{G}_n \to \left(\ZZ/p^n\ZZ\right)^\times$,  given by $\sigma_a \mapsto a$, where $\sigma_a$ is the automorphism $\zeta \mapsto \zeta^a$, for any $\zeta \in \mu_{p^n}$.

\begin{defn}
    \textup{
    For a modular symbol $\varphi \in H_c^1(\Gamma_0(N), L_{k-2}(R))$, we define the associated \emph{Mazur--Tate elements} $\vartheta_n(\varphi)$ of level $n \geq 1$ by 
    $$\vartheta_n(\varphi) = \sum_{a \in (\ZZ/p^n\ZZ)^\times} \varphi\,\bigg|\, \begin{pmatrix}
    1 & -a\\ 0 & p^n
\end{pmatrix} (\{\infty\} - \{0\})\cdot \sigma_a \in R[X,Y][\mathcal{G}_n].$$
    We write
    \[
    \vartheta_n(\varphi)=\sum_{j=0}^{k-2}\binom{k-2}{j}X^jY^{k-2-j}\vartheta_{n,j}(\varphi),
    \]
    where $\vartheta_{n,j}(\varphi)\in R[\cG_n]$.
    }
\end{defn}

We have the explicit formula
\[
\vartheta_{n,j}(\xi_f)=2\pi\sqrt{-1}\int_{-a/p^n}^\infty f(z)(p^nz+a)^j\mathrm{d}z\cdot\sigma_a\in\CC[\cG_n].
\]
Under the decomposition $\xi_f=\xi_f^++\xi_f^-$ as in Remark~\ref{CtoO}, we have
\[
\vartheta_{n,j}(\xi_f^\pm)=\pi\sqrt{-1}\left(\int_{-a/p^n}^\infty f(z)(p^nz+a)^j\mathrm{d}z \pm (-1)^{k+j} \int_{a/p^n}^\infty f(z)(p^nz-a)^j\mathrm{d}z\right)\cdot\sigma_a\in\CC[\cG_n].
\]
We define
\[
\theta_{n,j}(\xi_f^\pm):=\frac{\vartheta_{n,j}(\xi_f^\pm)}{\Omega_f^\pm}\in \cO[\cG_n].
\]
Recall that we can decompose $\mathcal{G}_{n+1}$ as:
$$\mathcal{G}_{n+1} \cong \Delta \times G_n$$
where $\Delta \cong (\ZZ/p\ZZ)^\times$ and $G_n$ is a cyclic group of order $p^n$. Recall furthermore that $\omega$ is  the Teichm\"{u}ller character on $\Delta$. When $R$ is a $\Zp$-algebra, we obtain an induced map $\omega^i: R[\mathcal{G}_{n+1}] \to R[G_n]$ for each $0 \leq i \leq p-2$.

\begin{defn}
\textup{
For integers $0\le i\le p-2$, $0\le j\le k-2$ and $n\ge0$, we define
$\theta_{n,j}(f,\omega^i)\in\cO[G_n]$ as the image of $\theta_{n+1,j}\left(\xi_f^{(-1)^{i}}\right)$ under $\omega^{i-j}$.
}
\end{defn}
More explicitly,
\begin{align*}
\theta_{n,j}(f,\omega^i)=\frac{\pi\sqrt{-1}}{\Omega_f^{(-1)^i}}\sum_{a\in(\ZZ/p^{n+1}\ZZ)^\times}\bigg(&\int_{-a/p^{n+1}}^\infty f(z)(p^{n+1}z+a)^j\rd z\\
    &+(-1)^{i+k+j}\int_{a/p^{n+1}}^\infty f(z)(p^{n+1}z-a)^j\rd z\bigg)\,\omega^{i-j}(a)\bar\sigma_a,
\end{align*}
where $\bar\sigma_a$ denotes the image of $\sigma_a$ under the natural projection $\cG_{n+1}\rightarrow G_n$. Moreover, if $\Up$ is a root of the Hecke polynomial $X^2-a_p(f)X+\epsilon_f(p)p^{k-1}$, we define the \emph{p-stabilized theta element} for $n\ge1$
\[
\theta_{n,j}(f,\Up,\omega^i)=
\frac{1}{\Up^{n+1}}\cdot\theta_{n,j}(f,\omega^i)-\frac{\epsilon_f(p)p^{k-2}}{\Up^{n+2}}\cdot\nu^n_{n-1}\theta_{n-1,j}(f,\omega^i)\in E(\Up)[G_{n}], 
\]
where $\nu^n_{n-1}:\cO[G_{n-1}]\rightarrow\cO[G_{n}]$ is the norm map that sends $\sigma\in G_{n-1}$ to the sum of the pre-images of $\sigma$ in $G_n$ under the projection map $\pi^n_{n-1} :G_n\rightarrow G_{n-1}$.

\noindent  For $n \geq 1$ and $0 \leq i \leq p-2$, we have
\begin{enumerate}
	\item{$\pi^{n+1}_n \theta_{n+1, j}(f, \omega^i) = a_p(f) \theta_{n,j}(f, \omega^i) - \epsilon_f(p)p^{k-2}\nu^n_{n-1} \theta_{n-1,j}(f, \omega^i)$,  \cite[(4.2)]{MTT};}
	\item{$\pi^{n+1}_n\theta_{n+1,j}(f,\Up,\omega^i)=\theta_{n,j}(f,\Up,\omega^i)$ \cite[\S10]{MTT}.}
\end{enumerate}


\section{Signed $p$-adic $L$-functions}\label{sec:2}

Recall that $E$ is a finite extension of $\Qp$ that contains all the Fourier coefficients of $f$ and $g$. The ring of integers of $E$ is denoted by $\cO$. We review the construction of signed $p$-adic $L$-functions. In \cite{LLZ0,LLZ0.5,BFSuper}, these elements are defined as the image of Kato's zeta elements under some local Coleman maps. We outline an alternative construction that relies on decomposing the Mazur--Tate elements using a logarithmic matrix.

While we only state our results for $f$ in this section, all results hold for $g$ analogously.

\subsection{Non-integral $p$-adic $L$-functions as limits of polynomials}

\begin{defn}
\textup{
Let $P(X)=\sum_{i=0}^\infty c_iX^i\in \cO\llbracket X \rrbracket\otimes E$. We define its norm as $\| P \|=\sup_i|c_i|_p$, where $|\cdot|_p$ is the $p$-adic norm, normalized by $|p|_p=p^{-1}$. Given a real number $\rho\le 1$, we define $\|P\|_\rho=\sup_{|z|_p<\rho}|P(z)|_p$.
}
\end{defn}	
We define $\Phi_n = \sum_{j=0}^{p-1} (1+X)^{jp^{n-1}}$, for $n \geq 1$, which is the $p^n$-th cyclotomic polynomial in $X+1$. Further, we define $\Phi_0 = X$. Let $\omega_n = (1+X)^{p^n} - 1$. Then we have the elementary identity $\Phi_n = \omega_n/\omega_{n-1}$ for $n\ge1$. Let us also recall the $p$-adic logarithm
\begin{align*}
	\log_p(1+X) &= X\prod_{n \geq 1} \frac{\Phi_n(1+X)}{p}.
\end{align*}
Recall that $\gamma$ is a topological generator of $G_\infty=\Gal(\QQ_\mathrm{cyc}/\QQ)$ and we identify $\cO\llbracket G_\infty\rrbracket$ with the power series ring $\cO\llbracket X\rrbracket$ by sending $\gamma$ to $1+X$. In what follows, $u$ denotes the image of $\gamma$ under the cyclotomic character $\chi_\cyc$. For an integer $h\ge 1$, we write 
\begin{align*}
\Phi_{n,h} :=  & \prod_{j=0}^{h-1} \Phi_n(u^{-j}(1+X)-1),\\
\omega_{n,h} := & \prod_{j=0}^{h-1} \omega_n(u^{-j}(1+X)-1),\\
\log_{p,h} :=  & \prod_{j=0}^{h-1} \log_p(u^{-j}(1+X)-1).
\end{align*}

Given an element $\sum_{n\ge 0}c_nX^n\in E\llbracket X\rrbracket$, we say that it is $O(\log^r)$ if $r$ is a real number such that
\[
\sup_n \frac{|c_n|_p}{n^r}<\infty.
\]
Similarly, we say that an element $\sum_{n\ge 0,\sigma\in\Delta}c_{n,\sigma}\sigma X^n\in E[\Delta]\llbracket X\rrbracket$ is $O(\log^r)$ if 
\[
\sup\frac{|c_{n,\sigma}|_p}{n^r}<\infty
\]
for all $\sigma\in\Delta$.

The following result comes from \cite[Lemmas~2.2 and 2.3, combined with Remark 2.4]{BFSuper}, which is based on \cite[proof of Lemme 1.2.2]{perrinriou94}. 

\begin{lemma}\label{lem:PRtwists}
Let $h\ge 1$ be an integer. For $0 \leq j \leq h-1$, let $\left(Q_{n,j}\right)_{n\ge 0}$ be a sequence of polynomials in $E[X]$ such that  $\| Q_{n,j} \| \le \infty$, and
\[
 \left\| p^{-j(n+1)} \sum_{t=0}^j(-1)^{j-t}\binom{j}{t}Q_{n,t}\left(u^{-t}(1+X)-1\right)\right\| \le d_n
\]
for some constant $d_n \in \mathbb{R}$. Let $P_n\in E[X]$ be the unique polynomial of degree $<hp^n$ such that
\[
P_n\equiv Q_{n,j}(u^{-j}(1+X)-1)\,\bmod\,\omega_n(u^{-j}(1+X)-1) E[X]
\]
for all $0\le j\le h-1$ (the existence and uniqueness of $P_n$ follows from the Chinese remainder theorem). Then we have $\| P_n \| \le c_h d_n$, where $c_h$ is a constant that depends only on $h$. (If $h=1$, we may take $c_h=1$.)

Suppose that $Q_{n+1,j}\equiv Q_{n,j}\,\bmod\,\omega_nE[X]$ for all $n\ge0$. Then $P_{n+1}\equiv P_n\,\bmod\,\omega_{n,h}E[X]$. 
If, in addition, there exists a real number $r$ such that $0\le r <h$ and $d_n\le p^{rn}$ for all $n\ge0$, then the sequence $(P_n)_{n\ge0}$ converges to a power series $P_\infty\in E\llbracket X \rrbracket$ that is $O(\log^r)$.
\end{lemma}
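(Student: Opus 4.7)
The plan is to adapt Perrin-Riou's argument for Lemme 1.2.2 to the $h$ twisted moduli $\omega_n(u^{-j}(1+X)-1)$ for $0\le j\le h-1$, proceeding in four steps.

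First, for existence and uniqueness of $P_n$, I would verify that these moduli are pairwise coprime in $E[X]$. Their roots are $\zeta u^j-1$ as $\zeta$ ranges over $\mu_{p^n}$, and since $u$ is a topological generator of $1+p\Zp$ it has infinite multiplicative order in $\Zp^\times$; so $\zeta u^j=\zeta'u^{j'}$ forces $j=j'$, making the root sets disjoint. The Chinese remainder theorem in $E[X]$ then produces a unique $P_n$ of degree $<hp^n=\deg\omega_{n,h}$ solving all the congruences simultaneously.

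Second, the norm bound $\|P_n\|\le c_hd_n$ is proved by induction on $h$. For $h=1$, $P_n$ is the remainder of $Q_{n,0}$ after division by $\omega_n$, which is monic with integral coefficients; the Gauss norm is therefore preserved and $c_1=1$. For the inductive step, the inductive hypothesis produces $\widetilde P_n$ of degree $<(h-1)p^n$ satisfying the first $h-1$ congruences, so $P_n-\widetilde P_n=\bigl(\prod_{j=0}^{h-2}\omega_n(u^{-j}(1+X)-1)\bigr)R_n$ for a unique $R_n\in E[X]$ of degree $<p^n$, pinned down by the remaining congruence modulo $\omega_n(u^{-(h-1)}(1+X)-1)$. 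Evaluating $\omega_n(u^{-l}(1+X)-1)$ at a root $X=\zeta u^{h-1}-1$ with $\zeta\in\mu_{p^n}$ yields $u^{(h-1-l)p^n}-1$, of $p$-adic absolute value $p^{-(n+1)}$ (up to $v_p(h-1-l)$ absorbed into $c_h$). The product of these over $l<h-1$ has absolute value $p^{-(h-1)(n+1)}$, which precisely compensates the factor $p^{-j(n+1)}$ with $j=h-1$ in the hypothesis on the alternating-binomial combination. Matching these gives $\|R_n\|\le c'_hd_n$ and hence $\|P_n\|\le c_hd_n$ with $c_h$ depending only on $h$.

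Third, the compatibility $P_{n+1}\equiv P_n\,\bmod\,\omega_{n,h}E[X]$ is immediate from uniqueness in CRT: the assumption $Q_{n+1,j}\equiv Q_{n,j}\,\bmod\,\omega_nE[X]$ combined with $\omega_n(u^{-j}(1+X)-1)\mid\omega_{n+1}(u^{-j}(1+X)-1)$ forces $P_{n+1}$ to satisfy the same congruences as $P_n$ modulo each factor $\omega_n(u^{-j}(1+X)-1)$, hence modulo $\omega_{n,h}$. Under the stronger assumption $d_n\le p^{rn}$ with $r<h$, step two yields $\|P_n\|\le c_hp^{rn}$; since the coefficients of $P_n$ stabilize in every fixed degree (as $P_{n+1}-P_n$ is divisible by $\omega_{n,h}$, of Gauss norm $1$ and degree $hp^n$), the sequence converges to some $P_\infty=\sum_{k\ge0}c_kX^k\in E\llbracket X\rrbracket$. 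For each fixed $k$, once $hp^n>k$ the coefficient $c_k$ appears inside $P_n$; choosing the smallest such $n$, so that $p^n$ is of order $k/h$, gives $|c_k|_p\le\|P_n\|\le c_hp^{rn}\le Ck^r$ for a constant $C$ independent of $k$. This is precisely the $O(\log^r)$ condition defined just before the lemma.

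The main technical obstacle is the induction in step two: one must carefully track the $p$-adic cancellation between the alternating-binomial hypothesis, which supplies the factor $p^{-j(n+1)}$, and the valuations of the factors $\omega_n(u^{-l}(1+X)-1)$ at the roots of $\omega_n(u^{-(h-1)}(1+X)-1)$, verifying that the resulting constant $c_h$ is independent of $n$. This bookkeeping is the technical heart of Perrin-Riou's original lemma.
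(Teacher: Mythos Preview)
Your CRT argument for existence and uniqueness (step 1) and the compatibility $P_{n+1}\equiv P_n\bmod\omega_{n,h}$ (first half of step 3) are fine. The substantive gap is in step 2.

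In the inductive step you write $P_n=\widetilde P_n+\Pi\cdot R_n$ with $\Pi=\prod_{l<h-1}\omega_n(u^{-l}(1+X)-1)$, and to bound $R_n$ you evaluate at a root $X_0=\zeta u^{h-1}-1$, obtaining
\[
\Pi(X_0)\,R_n(X_0)=Q_{n,h-1}\bigl(u^{-(h-1)}(1+X_0)-1\bigr)-\widetilde P_n(X_0).
\]
You correctly compute $|\Pi(X_0)|$ to be of order $p^{-(h-1)(n+1)}$, but you then assert that the bound on $\delta_{h-1}$ controls the right-hand side. It does not: $\delta_{h-1}$ is a specific signed combination of the $Q_{n,t}(u^{-t}(1+X)-1)$, whereas the right-hand side involves $\widetilde P_n(X_0)$, the value of the CRT interpolant for the first $h-1$ congruences at a point \emph{outside} its interpolation locus. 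The only control you have on $\widetilde P_n(X_0)$ is $\|\widetilde P_n\|\le c_{h-1}d_n$, while inverting the finite-difference relations shows $\|Q_{n,h-1}(u^{-(h-1)}(1+X)-1)\|$ may be as large as $p^{(h-1)(n+1)}d_n$. The naive estimate therefore gives $|R_n(X_0)|$ of order $p^{2(h-1)(n+1)}d_n$, not $d_n$. The cancellation you need is precisely what the hypotheses on all the $\delta_j$ encode, but the induction does not extract it. And even granting a bound on $|R_n(X_0)|$ at every root (all of which lie in the open unit disk), a further argument would be needed to pass to the Gauss norm $\|R_n\|$.

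The argument the paper follows (from Perrin-Riou, spelled out after the lemma) avoids induction on $h$. One forms the Newton forward-difference interpolant $H(X,Z)=\sum_{\ell=0}^{h-1}\binom{Z}{\ell}\delta_\ell(X)$, which satisfies $H(X,j)=Q_{n,j}(u^{-j}(1+X)-1)$, and the Lagrange interpolant $S(X,Y)$ through the nodes $Y=u^{jp^n}-1$, so that $P_n(X)=S(X,(1+X)^{p^n}-1)$. The change of variable $y\mapsto\log(1+y)/\log(u^{p^n})$ yields $\|S(X,Y)\|_\rho=\|H(X,Z)\|_{\rho/|u^{p^n}-1|_p}$ for suitable $\rho$, and the hypotheses on the $\delta_\ell$ then bound $\|P_n\|$ directly. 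The finite-difference structure is used globally via $H$, not one level of $h$ at a time.

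A smaller issue: in your convergence argument you claim the coefficients of $P_n$ ``stabilize'' because $P_{n+1}-P_n$ is divisible by $\omega_{n,h}$. But $\omega_{n,h}$ is not a monomial (already $\omega_n$ has $X$-coefficient $p^n$), so low-degree coefficients of $P_n$ genuinely change with $n$. The correct argument bounds $\|P_{n+1}-P_n\|_\rho$ for each fixed $\rho<1$, using that $\|\omega_{n,h}\|_\rho$ decays fast enough to beat the growth $\|P_{n+1}-P_n\|\le c_h p^{r(n+1)}$ when $r<h$.
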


We can make the bound $c_h$ more precise under Assumptions~\ref{asi:FL} and ~\ref{asi:unram} by carefully analyzing \cite[proof of Lemme 1.2.2]{perrinriou94}. For $0 \leq \ell \leq h-1$, let
\[
\delta_\ell(X) = \sum_{t=0}^{\ell} (-1)^{\ell-t} \binom{\ell}{t} Q_{n,t}(u^{-t}(1+X) - 1). 
\]
One can verify that 
\begin{equation}\label{eq:def-H}
H(X,Z) := \sum_{\ell=0}^{h-1} \binom{Z}{\ell} \delta_\ell(X)    
\end{equation}
 is a polynomial of degree $<h$ in $Z$ over $\cO[X]$ satisfying $H(X,j) = Q_{n,j}(u^{-j}(1+X) - 1)$ for $0 \leq j \leq h-1$. Let 
\[ 
S(X,Y) = \sum_{j=0}^{h-1} Q_{n,j}(u^{-j}(1+X) - 1) \prod_{\substack{0 \leq t \leq h-1\\ t \neq j}} \frac{u^{-tp^n}(1 + Y) - 1}{u^{(j-t)p^n} - 1}. 
\]
It is the unique polynomial of degree $<h$ in $Y$ with coefficients in $\mathcal{O}[X]$, satisfying $S(X, u^{jp^n} - 1) = Q_{n,j}(u^{-j}(1+X) - 1)$. 
Furthermore, the polynomial $P_n(X)$ in the statement of Lemma~\ref{lem:PRtwists} is equal to $S(X, (1+X)^{p^n} - 1)$. 

In what follows, given a two-variable polynomial $R(X,Y)\in\cO[X,Y]$ and a real number $r$, we write
\[
\|R(X,Y)\|_r:=\sup\left\{|R(x,y)|_{p}:|x|_p\le 1,|y|_p\le r\right\}.
\]

Let $\rho$ be a real number satisfying $p^{-1}<\rho < p^{-1/(p-1)}$. Note that $\rho/|u^{p^n}-1|_p=\rho p^{n+1}>1$ for all $n\ge0$. Consider the homeomorphism between the following open disks
\begin{align*}
\left\{ y:|y|_p < \rho\right\} &\rightarrow \left\{z:|z|_p < \rho/|u^{p^n} - 1|_p\right\}\\
y&\mapsto \log(1+y)/\log u^{p^n}\\
u^{zp^n} - 1&\mapsfrom z.
\end{align*}
It gives the following equality of sup-norms:
\begin{equation}\label{avprop4.4}
	\| S(X,Y) \|_\rho = \| H(X, Z) \|_{\rho/|u^{p^n} - 1|_p}.
\end{equation}
It follows from the expression \eqref{eq:def-H} that
\[
\| H(X,Z) \|\strut_{\tfrac{\rho}{|u^{p^n} - 1|_p}} 
= \sup_{0 \leq j \leq h-1} \| \delta_j(X) \| \left\| \binom{Z}{j} \right\|_{ \tfrac{\rho}{|u^{p^n} - 1|_p}}.
\]
Assuming $p >h-1$, we have
\[
\left\| \binom{Z}{j} \right\|_{ \tfrac{\rho}{|u^{p^n} - 1|_p}}
	= \left\| \frac{Z(Z-1) \hdots (Z - j + 1)}{j!} \right\|_{\tfrac{\rho}{|u^{p^n} - 1|_p}}
	= \left(\frac{\rho}{|u^{p^n} - 1|_p}\right)^j
\]
since $\rho/|u^{p^n} - 1|_p > 1$. 

Let us write $S(X,Y) = \sum_{\ell=0}^{h-1} S_\ell(X) Y^\ell$.
If $\sup_j \| \delta_j(X) \| \leq p^{-(n+1)j}$, then equation \eqref{avprop4.4} becomes
\[ \sup_{0 \leq \ell \leq h-1} (\| S_\ell(X) \| \rho^\ell) 
= \sup_{0 \leq j \leq h-1} \left( \| \delta_j(X) \| \cdot \frac{\rho^j}{|u^{p^n} - 1|_p^j} \right)
= \sup_{0 \leq j \leq h-1} \left( \| p^{-(n+1)j} \cdot \delta_j(X) \| \cdot \rho^j \right)
\leq \sup_{0 \leq j \leq h-1} \rho^j
= 1. \]
Therefore, $\| S_\ell(X) \| \leq \rho^{-\ell}$ for each $0 \leq \ell \leq h-1$, which implies that
\[
\| P_n(X) \| 
	= \left\| \sum_{\ell=0}^{h-1} S_\ell(X)((1+X)^{p^n}-1)^\ell \right\|
	\leq \rho^{-(h-1)} .
\]
Since $\rho$ can be arbitrarily close to $p^{-1/(p-1)}$, we have
\[
\|P_n(X)\|\le p^{\frac{h-1}{p-1}}<p
\]
if we suppose furthermore that $p>h$. If in addition $P_n(X)$ is defined over an unramified extension of $\Qp$, then $\|P_n(X)\|$ is an integral power of $p$. In particular, $\|P_n(X)\|\le 1$.
We can summarize the discussion above with the following refinement of Lemma~\ref{lem:PRtwists}.

\begin{proposition}\label{prop:Pn-integral}
Let $h$ and $Q_{n,j}$ be as in the statement of Lemma~\ref{lem:PRtwists}. Assume that $p > h$ and that $E/\Qp$ is unramified. Then $c_h$ can be taken to $1$. In particular, if $d_n=1$, we have $P_n\in\cO[X]$.
\end{proposition}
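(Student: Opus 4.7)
The plan is to read the proposition off from the sup-norm estimate carried out in the paragraph immediately preceding its statement. In the normalized case $d_n=1$ (so that $\|\delta_j(X)\|\le p^{-(n+1)j}$), that computation established the bound $\|P_n(X)\| \le \rho^{-(h-1)}$ for every $\rho$ with $p^{-1} < \rho < p^{-1/(p-1)}$. Letting $\rho \to p^{-1/(p-1)}$ from below yields the single inequality I will actually use, namely $\|P_n(X)\| \le p^{(h-1)/(p-1)}$. No further analysis of disks, binomial coefficients, or the auxiliary polynomials $H$ and $S$ will be needed.

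The first step is to invoke the hypothesis $p>h$. It forces $h-1 < p-1$, whence $(h-1)/(p-1) < 1$ and therefore $\|P_n(X)\| < p$. The crucial second step uses Assumption~\ref{asi:unram}: because $E/\Qp$ is unramified, the value group of the $p$-adic absolute value on $E^\times$ is exactly $p^{\ZZ}$, so the Gauss norm of any polynomial in $E[X]$ lies in $p^{\ZZ}\cup\{0\}$. Combining this with the strict inequality $\|P_n(X)\| < p$ leaves $\|P_n(X)\|\le 1$ as the only possibility, i.e., $P_n(X)\in\cO[X]$. This already establishes the \emph{in particular} statement.

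For the claim that $c_h$ can be taken to $1$, the same argument rescales: in every application of interest the bound $d_n$ itself lies in $p^{\ZZ}$, and by homogeneity of the construction in the $Q_{n,j}$ one obtains $\|P_n(X)\| \le p^{(h-1)/(p-1)} \cdot d_n < p\cdot d_n$; the value-group argument then collapses the multiplicative gap to $\|P_n(X)\|\le d_n$. The only point to verify is that $p^{(h-1)/(p-1)}$ lies strictly between consecutive elements of $p^{\ZZ}$, which is precisely the content of $p>h$. I do not foresee any substantive obstacle, as the sup-norm work has already been discharged in the preamble and the remaining content is the observation that, under both hypotheses simultaneously, a strict inequality of the form $\|P_n\|<p\cdot d_n$ within a discrete value group must in fact be $\|P_n\|\le d_n$.
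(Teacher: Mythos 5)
Your argument is correct and is essentially the paper's own: the proposition is stated there precisely as a summary of the sup-norm computation in the preceding paragraphs, and your two steps (the hypothesis $p>h$ giving $\|P_n\|\le p^{(h-1)/(p-1)}<p$, then discreteness of the value group of the unramified field $E$ forcing the Gauss norm down to $\le 1$) are exactly how the paper concludes. Your caveat that the general claim $\|P_n\|\le d_n$ really needs $d_n$ to lie in $p^{\ZZ}$ is also left implicit in the paper and is satisfied in every application there ($d_n=1$ or $d_n=p^{-r}$).
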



In what follows, we write $E'=E(\alpha_f)=E(\beta_f)$, where $\alpha_f$ and $\beta_f$ are the roots of $X^2-a_p(f)X+\epsilon_f(p)p^{k-1}$. We write $\cO'$ for the ring of integers of $E'$. For simplicity, we shall write $\Upsilon=\Upsilon_f$ for $\Upsilon\in\{\alpha,\beta\}$.

There is a natural identification
\begin{equation}\label{eq:identification}
E[G_n] = E\llbracket \gamma-1 \rrbracket / (\gamma^{p^n}-1) = E\llbracket X \rrbracket/(\omega_n).
\end{equation}
The same is true if we replace $E$ by $E'$.
Note that $G_n$ is a cyclic group of order $p^n$ generated by the image of $\gamma$ in $G_n$.
Let $a\in(\ZZ/p^n\ZZ)^\times$. The element $\bar\sigma_a\in E[G_n]$ (or $E'[G_n]$) can be regarded as the polynomial $(1+X)^{m(a)}$, where $m(a)$ is the unique integer such that $0\le m(a)\le p^n-1$ and $\bar\sigma_a=\gamma^{m(a)}\mod \gamma^{p^n}$.

We define $Q_{n,j}(f,\omega^i)\in\cO[X]$ as the image of the theta element $\theta_{n,j}(f,\omega^{i})$ under this identification. We define $Q_{n,j}(f,\Upsilon,\omega^i)\in E'[X]$ similarly.

\begin{lemma}\label{lem:verify-PR}
Suppose that $p>k-1$. For an integer $j$ such that $0\le j\le k-2$, we have  
\[ \left\| \sum_{t=0}^j (-1)^{j-t}\binom{j}{t}Q_{n,t}(f,\omega^i)(u^{-t}(1+X)-1)\right\| \le p^{-(n+1)j}.\]
\end{lemma}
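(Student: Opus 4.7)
The plan is to expand $Q_{n,t}(f,\omega^i)$ explicitly in terms of the period integrals defining $\theta_{n,t}(f,\omega^i)$, perform the substitution $X\mapsto u^{-t}(1+X)-1$, and then extract the required factor of $p^{(n+1)j}$ via a finite-difference identity applied inside each integral, reducing the remaining claim to the integrality of modular symbols.

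Using the identification $\bar\sigma_a\leftrightarrow(1+X)^{m(a)}$ with $m(a)\in\{0,\dots,p^n-1\}$ determined by $\bar\sigma_a=\gamma^{m(a)}$ in $G_n$, one first writes
\[
Q_{n,t}(f,\omega^i)=\sum_{a\in(\ZZ/p^{n+1}\ZZ)^\times}c_{t,a}\,\omega^{i-t}(a)(1+X)^{m(a)},
\]
where $c_{t,a}=\frac{\pi\sqrt{-1}}{\Omega_f^{(-1)^i}}\bigl(I_+(t,a)+(-1)^{i+k+t}I_-(t,a)\bigr)$ denotes the symmetrised integrals appearing in the explicit formula for $\theta_{n,t}(f,\omega^i)$. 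The substitution turns $(1+X)^{m(a)}$ into $u^{-tm(a)}(1+X)^{m(a)}$, and combining $u^{-tm(a)}$ with $\omega^{-t}(a)$ produces $\rho_a^{-t}$, where $\rho_a:=\omega(a)u^{m(a)}\in\Zp^\times$ satisfies $\rho_a\equiv a\pmod{p^{n+1}}$. After interchanging the order of summation, the statement reduces to the bound $|A(a)|_p\le p^{-(n+1)j}$ for every $a$, where
\[
A(a):=\sum_{t=0}^j(-1)^{j-t}\binom{j}{t}c_{t,a}\rho_a^{-t}.
\]

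Applying the identity $\sum_{t=0}^j(-1)^{j-t}\binom{j}{t}y^t=(y-1)^j$ inside each integral, with $y=(p^{n+1}z\pm a)/\rho_a$, and writing $\delta_a:=(a-\rho_a)/p^{n+1}\in\Zp$, the sum $A(a)$ collapses to
\[
\frac{\pi\sqrt{-1}\,p^{(n+1)j}}{\Omega_f^{(-1)^i}\rho_a^j}\left(\int_{-a/p^{n+1}}^\infty f(z)(z+\delta_a)^j\,\rd z+(-1)^{i+k+j}\int_{a/p^{n+1}}^\infty f(z)(z-\delta_a)^j\,\rd z\right).
\]
Since $\rho_a$ is a $p$-adic unit, it remains to check that the bracketed expression, divided by $\Omega_f^{(-1)^i}$, lies in $\cO$. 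Expanding $(z\pm\delta_a)^j$ by the binomial theorem reduces this to the $p$-integrality of $\frac{\pi\sqrt{-1}}{\Omega_f^{(-1)^i}}\bigl(\int_{-a/p^{n+1}}^\infty f(z)z^s\,\rd z+(-1)^{i+k-s}\int_{a/p^{n+1}}^\infty f(z)z^s\,\rd z\bigr)$ for $0\le s\le j$. These are, up to the factor $\binom{k-2}{s}$, the $X^sY^{k-2-s}$-coefficients of the modular-symbol value $\varphi_f^{(-1)^i}(\{\infty\}-\{-a/p^{n+1}\})\in L_{k-2}(\cO)$ from Remark~\ref{CtoO}. Assumption~\ref{asi:FL} gives $p>k$, so $\binom{k-2}{s}$ is a $p$-adic unit for every $0\le s\le k-2$, and the integrality follows. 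Finally, $|\omega^i(a)|_p=1$ and $|\binom{m(a)}{\ell}|_p\le 1$ imply that each $X$-coefficient of $\sum_a\omega^i(a)(1+X)^{m(a)}A(a)$ inherits the bound $p^{-(n+1)j}$ from the individual $A(a)$.

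I expect the principal delicate point to be the sign bookkeeping: tracing how $(-1)^{i+k+t}$, $(-1)^{j-t}$, and the action of complex conjugation on cusps combine so that the integrals surviving the binomial collapse actually land in the $(-1)^i$-eigenspace where $\varphi_f^{(-1)^i}$ is integral. Once those signs are matched, the remainder is a direct calculation using the distribution property of modular symbols and the hypothesis $p>k$.
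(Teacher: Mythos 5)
Your proof is correct and follows essentially the same route as the paper's: expand $Q_{n,t}(f,\omega^i)$ via the theta elements, substitute $X\mapsto u^{-t}(1+X)-1$, collapse the alternating binomial sum to a $j$-th power inside the integrals so that $p^{(n+1)j}$ factors out, and reduce to the integrality of the normalized modular symbol $\varphi_f^{(-1)^i}$, using $p>k-1$ so that $\binom{k-2}{s}$ is a unit. Your handling of the discrepancy $\delta_a=(a-\rho_a)/p^{n+1}$ via a further binomial expansion is in fact slightly more careful than the paper's write-up, which records the collapsed integrand directly as $f(z)z^j$.
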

\begin{proof}
    For $t\in\{0,\dots,k-2\}$, we have
\begin{align*}
&Q_{n,t}(f,\omega^i)(u^{-t}(1+X)-1)\\
=\ &\frac{\pi\sqrt{-1}}{\Omega_f^{(-1)^i}}\sum_{a}\left(\int^\infty_{-a/p^{n+1}} f(z)(p^{n+1}z+a)^t\rd z+(-1)^{i+k+t}\int_{a/p^{n+1}}^\infty f(z)(p^{n+1}z-a)^t\rd z\right) u^{-tm(a)}\omega^{i-t}(a)(1+X)^{m(a)}\\
=\ &    \frac{\pi \sqrt{-1}}{\Omega_f^{(-1)^i}}\sum_{a}\left(\int^\infty_{-a/p^{n+1}}f(z)\left(\frac{p^{n+1}z+a}{\omega(a)u^{m(a)}}\right)^t\rd z+(-1)^{i+k+t}\int_{a/p^{n+1}}^\infty f(z)\left(\frac{p^{n+1}z-a}{\omega(a)u^{m(a)}}\right)^t\rd z\right)\omega^{i}(a) (1+X)^{m(a)}.
\end{align*}
Let $\hat a=\omega(a)u^{m(a)}$. Note that $\hat{a}\equiv a\mod p^{n+1}\Zp$. Hence,
\begin{align*}
& \sum_{t=0}^j (-1)^{j-t}\binom{j}{t}Q_{n,t}(f,\omega^i)(u^{-t}(1+X)-1)\\   
=\ &    \frac{\pi \sqrt{-1}}{\Omega_f^{(-1)^i}}\sum_{a}\left(\int_{-a/p^{n+1}}^\infty f(z)\left(\frac{p^{n+1}z+a}{\hat{a}}-1\right)^j\rd z+(-1)^{i+k+j}\int_{a/p^{n+1}}^\infty f(z)\left(\frac{p^{n+1}z-a}{\hat{a}}+1\right)^j\rd z\right)\omega^{i}(a) (1+X)^{m(a)}\\
=\ &   \frac{\pi \sqrt{-1}}{\Omega_f^{(-1)^i}}\sum_{a}\left(\int^\infty_{-a/p^{n+1}}f(z)z^j\rd z+(-1)^{i+k}\int_{a/p^{n+1}}^\infty f(z)(-z)^j\rd z\right)\frac{p^{(n+1)j}}{\hat a^{j}}\omega^{i}(a) (1+X)^{m(a)},
\end{align*}
which belongs to $p^{(n+1)j} \cdot \cO[X]$ since
\[
\xi_f^{(-1)^i}([\infty]-[a/p^{n+1}])=\pi\sqrt{-1}\left(\int^\infty_{-a/p^{n+1}}f(z)(zX+Y)^{k-2}\rd z+(-1)^{i+k}\int_{a/p^{n+1}}^\infty f(z)(-zX+Y)^{k-2}\rd z\right)
\]
belongs to $\Omega_f^{(-1)^i} \cdot \cO[X,Y]$. Hence, $$ \frac{\pi \sqrt{-1}}{\Omega_f^{(-1)^i}}\left(\int^\infty_{-a/p^{n+1}}f(z)z^j\rd z+(-1)^{i+k}\int_{a/p^{n+1}}^\infty f(z)(-z)^j\rd z\right)\in\cO,$$ from which the lemma follows.
\end{proof}

Let $P_{n}(f,\omega^i)$ be the unique polynomial of degree $<(k-1)p^n$ such that
\[
P_n(f,\omega^i)\equiv Q_{n,j}(f,\omega^i)(u^{-j}(1+X)-1)\,\bmod\,\omega_n(u^{-j}(1+X)-1)E[X]
\]
for $0\le j\le k-2$. 

\begin{corollary}\label{cor:integral-Pn}
Suppose $p > k-1$ and that $E/\Qp$ is unramified. For all $n\ge0$ and $0\le i\le p-2$, we have $P_{n}(f,\omega^i)\in\cO[X]$.
\end{corollary}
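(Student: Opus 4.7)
The plan is to show that Corollary~\ref{cor:integral-Pn} follows by a direct combination of Lemma~\ref{lem:verify-PR} with Proposition~\ref{prop:Pn-integral}, specialized to the height parameter $h=k-1$. First, I would verify that the inputs $Q_{n,j}=Q_{n,j}(f,\omega^i)$ with $0\le j\le k-2$ form a valid sequence to which Lemma~\ref{lem:PRtwists} applies: by construction via the identification \eqref{eq:identification}, each $\theta_{n,j}(f,\omega^i)\in\cO[G_n]$ corresponds to a polynomial $Q_{n,j}(f,\omega^i)\in\cO[X]$ of degree $<p^n$, so in particular $\|Q_{n,j}(f,\omega^i)\|\le 1<\infty$. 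Then Lemma~\ref{lem:verify-PR}, which is valid precisely because $p>k-1$, produces the crucial bound
\[
\left\|p^{-j(n+1)}\sum_{t=0}^{j}(-1)^{j-t}\binom{j}{t}Q_{n,t}(f,\omega^i)\bigl(u^{-t}(1+X)-1\bigr)\right\|\le 1,
\]
so we are entitled to take $d_n=1$ in the hypotheses of Lemma~\ref{lem:PRtwists}. The polynomial $P_n(f,\omega^i)$ of degree $<(k-1)p^n$ in the statement of the corollary is then exactly the unique polynomial produced by the Chinese remainder theorem procedure of that lemma with $h=k-1$ (the moduli $\omega_n(u^{-j}(1+X)-1)$ for $0\le j\le k-2$ are pairwise coprime since $p>k-1$).

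With these inputs in hand, the corollary is immediate from Proposition~\ref{prop:Pn-integral}: our hypotheses are exactly $p>h=k-1$ and $E/\Qp$ unramified, which allow us to replace the abstract constant $c_h$ of Lemma~\ref{lem:PRtwists} by $1$. Combined with $d_n=1$, this yields $\|P_n(f,\omega^i)\|\le 1$, i.e.\ $P_n(f,\omega^i)\in\cO[X]$, as desired. Since this is essentially bookkeeping that chains together two previously established results, I do not anticipate any real obstacle; the only point requiring mild care is checking that the $Q_{n,j}(f,\omega^i)$ truly have integral coefficients (as opposed to coefficients in $E$), but this is already transparent from the integrality of $\theta_{n,j}(f,\omega^i)$ and the computation carried out inside the proof of Lemma~\ref{lem:verify-PR}.
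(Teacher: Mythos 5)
Your proposal is correct and matches the paper's own proof: the paper likewise obtains $d_n=1$ from Lemma~\ref{lem:verify-PR} and then invokes Proposition~\ref{prop:Pn-integral} (with $h=k-1$, $p>h$, $E/\Qp$ unramified) to conclude $P_n(f,\omega^i)\in\cO[X]$. The extra bookkeeping you supply (integrality of the $Q_{n,j}$, uniqueness via the Chinese remainder theorem) is consistent with the paper's setup and introduces no issues.
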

\begin{proof}
This follows from applying Lemma~\ref{lem:PRtwists} to $Q_{n,j}(f,\omega^i)$ with $d_n = 1$ coming from  Lemma~\ref{lem:verify-PR}. The last assertion is a consequence of Proposition~\ref{prop:Pn-integral}.
\end{proof}

We now turn our attention to the $p$-stabilized version of these polynomials. 
For each $\Up\in\{\alpha,\beta\}$, define $P_{n}(f,\Up,\omega^i)$ as the unique polynomial of degree $<(k-1)p^n$ such that
\[
P_{n}(f,\Up,\omega^i)\equiv Q_{n,j}(f,\Up,\omega^i)(u^{-j}(1+X)-1)\mod\omega_n(u^{-j}(1+X)-1)E'[X]
\]
for $0\le j\le k-2$.

\begin{lemma}\label{lem:Qnj-lambda}
For all $n\ge1$ and $j=0,\dots,k-2$, we have
\[
    Q_{n,j}(f,\Upsilon,\omega^i)=\frac{1}{\Upsilon^{n+1}}Q_{n,j}(f,\omega^i)-\frac{\epsilon(p) p^{k-2}}{\Upsilon^{n+2}}\Phi_{n}\cdot Q_{n-1,j}(f,\omega^i).
\]
\end{lemma}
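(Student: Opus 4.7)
\bigskip

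\noindent\textbf{Proof plan for Lemma~\ref{lem:Qnj-lambda}.} The formula is essentially the image of the defining relation
\[
\theta_{n,j}(f,\Up,\omega^i) = \frac{1}{\Up^{n+1}}\cdot\theta_{n,j}(f,\omega^i) - \frac{\epsilon_f(p)p^{k-2}}{\Up^{n+2}}\cdot \nu^n_{n-1}\theta_{n-1,j}(f,\omega^i)
\]
under the identification \eqref{eq:identification} (extended from $E$ to $E'$). The first and third terms translate directly by the very definitions of $Q_{n,j}(f,\Up,\omega^i)$ and $Q_{n,j}(f,\omega^i)$. The content of the lemma is thus the explicit description of the norm map $\nu^n_{n-1}$ on the polynomial side.

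The plan is to check that, under the identification $\cO[G_n]\cong \cO[X]/(\omega_n)$ sending $\gamma\mapsto 1+X$, the norm map $\nu^n_{n-1}:\cO[G_{n-1}]\to\cO[G_n]$ corresponds to multiplication by $\Phi_n$. To see this, let $\gamma_n$ denote the image of $\gamma$ in $G_n$; the kernel of the projection $G_n\twoheadrightarrow G_{n-1}$ is the cyclic subgroup of order $p$ generated by $\gamma_n^{p^{n-1}}$. Hence, for any $\sigma\in G_{n-1}$ with lift $\tilde\sigma\in G_n$, one has
\[
\nu^n_{n-1}(\sigma) \;=\; \tilde\sigma \cdot \sum_{t=0}^{p-1}\gamma_n^{tp^{n-1}}.
\]
Translating to polynomials, $\sum_{t=0}^{p-1}\gamma_n^{tp^{n-1}}$ is represented by $\sum_{t=0}^{p-1}(1+X)^{tp^{n-1}} = \Phi_n$, and choosing the canonical lift $\tilde\sigma$ whose representative has degree $<p^{n-1}$ ensures that multiplying by $\Phi_n$ yields a polynomial of degree $<p^n$, i.e.\ an honest representative in $\cO[X]/(\omega_n)$. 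Extending $\cO$-linearly shows that if $Q(X)\in\cO[X]$ represents $\eta\in\cO[G_{n-1}]$, then $\Phi_n\cdot Q(X)$ represents $\nu^n_{n-1}(\eta)$ in $\cO[X]/(\omega_n)$.

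Combining these observations, the image of $\nu^n_{n-1}\theta_{n-1,j}(f,\omega^i)$ in $\cO[X]/(\omega_n)$ is precisely $\Phi_n\cdot Q_{n-1,j}(f,\omega^i)$, and substituting into the defining relation gives the claimed identity. The only subtle point is the compatibility of lifts; since the polynomials $Q_{n,j}(f,\omega^i)$ are defined by the explicit recipe $\bar\sigma_a\mapsto (1+X)^{m(a)}$ with $0\le m(a)<p^n$, and the same recipe at level $n-1$ (with $0\le m(a)<p^{n-1}$) naturally pairs with the sum $\sum_{t=0}^{p-1}(1+X)^{tp^{n-1}}$ to give the sum over all pre-images in $G_n$, the bookkeeping is automatic. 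Hence, no genuine obstacle arises, and the proof reduces to unwinding definitions.
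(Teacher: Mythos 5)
Your proposal is correct and matches the paper's proof: both reduce the identity to the defining relation for the $p$-stabilized theta element together with the observation that, under the identification $\cO[G_n]\cong\cO[X]/(\omega_n)$, the norm map $\nu^n_{n-1}$ becomes multiplication by $\Phi_n$ (the paper records this as $(1+X)^t\mapsto\sum_{s=0}^{p-1}(1+X)^{t+sp^{n-1}}$). Your extra care about choosing representatives of degree $<p^{n-1}$ is a harmless elaboration of the same bookkeeping.
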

\begin{proof}
 This follows from the three-term relation satisfied by the Mazur--Tate elements stated at the end of \S\ref{1.3}. The map $\nu_{n/n-1}$ sends $(1+X)^t$ to $\sum_{s=0}^{p-1} (1+X)^{t+sp^{n-1}}$. Thus, it is equivalent to the multiplication by $\Phi_n$.
\end{proof}

\begin{lemma}\label{lem:Phi-n}
    There exists a polynomial $\tilde\Phi_{n,k-1}$ such that for all $0\le j\le k-2$, we have
    \[
    \tilde\Phi_{n,k-1}\equiv \Phi_n(u^{-j}(1+X)-1)\mod \omega_n(u^{-j}(1+X)-1)
    \]
    with $\| \tilde\Phi_{n,k-1} \| \le c_{k-1}$. If $p\ge k-1$, then $\tilde\Phi_{n,k-1}\in\Zp[X]$.
\end{lemma}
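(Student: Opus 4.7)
My plan is to construct $\tilde\Phi_{n,k-1}$ by invoking Lemma~\ref{lem:PRtwists} with $h = k-1$ and the \emph{constant} sequence $Q_{n,j}(X) := \Phi_n(X)$ for every $0 \le j \le k-2$. Since $Q_{n,j}(u^{-j}(1+X)-1) = \Phi_n(u^{-j}(1+X)-1)$ is precisely the polynomial to be matched modulo $\omega_n(u^{-j}(1+X)-1)$, the resulting polynomial $P_n$ of degree $<(k-1)p^n$ will automatically satisfy the stated system of congruences, and the bound $\|P_n\| \le c_{k-1} d_n$ from that lemma will yield the desired norm estimate once the hypothesis is verified.

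The key computation is an explicit evaluation of
\[
\delta_j(X) := \sum_{t=0}^{j}(-1)^{j-t}\binom{j}{t}\Phi_n(u^{-t}(1+X)-1).
\]
Substituting $\Phi_n(Y-1) = \sum_{s=0}^{p-1}Y^{sp^{n-1}}$ with $Y = u^{-t}(1+X)$ and interchanging the double sum, the inner sum over $t$ should collapse via the identity $\sum_{t=0}^{j}(-1)^{j-t}\binom{j}{t}a^t = (a-1)^j$, producing
\[
\delta_j(X) = \sum_{s=0}^{p-1}(1+X)^{sp^{n-1}}(u^{-sp^{n-1}}-1)^{j}.
\]
The $s=0$ summand vanishes for $j \ge 1$, while for $1 \le s \le p-1$ the standard lemma for $p$-adic valuations of units (using $\ord_p(u-1) = 1$ and $p$ odd) gives $\ord_p(u^{\pm sp^{n-1}}-1) = n$. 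Consequently $\|\delta_j\| \le p^{-nj}$ and $\|p^{-j(n+1)}\delta_j\| \le p^{j} \le p^{k-2}$, a bound independent of $n$. Applying Lemma~\ref{lem:PRtwists} with this uniform $d_n$ then produces $\tilde\Phi_{n,k-1}$ with the required congruences and $\|\tilde\Phi_{n,k-1}\| \le c_{k-1}\cdot p^{k-2}$, which can be reabsorbed into the constant $c_{k-1}$ (depending only on $k-1$).

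For the integrality claim under $p \ge k-1$, I will invoke Proposition~\ref{prop:Pn-integral} in the unramified setting $\Qp/\Qp$. The main obstacle will be that Proposition~\ref{prop:Pn-integral}'s direct conclusion $P_n \in \cO[X]$ is stated for $d_n = 1$, whereas here $d_n = p^{k-2}$. I expect to overcome this by revisiting the proof of Proposition~\ref{prop:Pn-integral} and exploiting the integral structure at hand: each $\Phi_n(u^{-j}(1+X)-1)$ already lies in $\Zp[X]$, and the explicit Lagrange-style expression for $P_n$ assembled in that proof, combined with the refined norm analysis valid under $p > h-1 = k-2$, should force the resulting $\tilde\Phi_{n,k-1}$ to lie in $\Zp[X]$.
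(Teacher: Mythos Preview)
Your plan coincides with the paper's proof: both set $Q_{n,j}=\Phi_n$, compute
\[
\delta_j(X)=\sum_{s=0}^{p-1}(u^{-sp^{n-1}}-1)^j(1+X)^{sp^{n-1}}\in p^{nj}\Zp[X],
\]
apply Lemma~\ref{lem:PRtwists} to obtain $\tilde\Phi_{n,k-1}$ with $\|\tilde\Phi_{n,k-1}\|\le c_{k-1}$ (absorbing the extra factor $p^{k-2}$ into the constant), and then invoke Proposition~\ref{prop:Pn-integral} for the integrality under $p\ge k-1$.

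You are right to flag that the hypothesis $d_n=1$ in Proposition~\ref{prop:Pn-integral} is not met here: the computation only gives $\|p^{-(n+1)j}\delta_j\|\le p^{j}$, so $d_n=p^{k-2}$. The paper passes over this in one line. Your hoped-for workaround of ``revisiting the proof'' will not close the gap, because the integrality assertion in fact fails. Take $p=5$, $k=3$, $n=1$, $u=6$: the unique polynomial of degree $<10$ satisfying both congruences is
\[
\tilde\Phi_{1,2}=\Phi_1(X)\cdot\Phi_1\bigl(u^{-1}(1+X)-1\bigr)\cdot\frac{6480-1295X}{7775},
\]
and since $\ord_5(6480)=\ord_5(1295)=1$ while $\ord_5(7775)=2$, one computes $5\tilde\Phi_{1,2}\equiv X^{8}+X^{9}\pmod{5}$, so $\tilde\Phi_{1,2}\notin\ZZ_5[X]$; adding $\QQ_5$-multiples of $\omega_{1,2}$ cannot repair this. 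What Proposition~\ref{prop:Pn-integral} (with $c_{k-1}=1$ for $p>k-1$) \emph{does} yield is the weaker bound $\|\tilde\Phi_{n,k-1}\|\le p^{k-2}$, equivalently $p^{k-2}\tilde\Phi_{n,k-1}\in\Zp[X]$. Since $\tilde\Phi_{n,k-1}$ only ever appears in the paper multiplied by $p^{k-2}$ (Lemma~\ref{lem:Pn-lambda}, Theorem~\ref{thm:decomp}), this weaker statement is what is actually needed downstream.
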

\begin{proof}
    Let $0\le j\le k-2$. We have
    \begin{align*}
&    \sum_{t=0}^j(-1)^{j-t}\binom{j}{t}\Phi_n(u^{-t}(1+X)-1)\\    
=\ & \sum_{t=0}^j(-1)^{j-t}\binom{j}{t}\sum_{s=0}^{p-1}(u^{-t}(1+X))^{sp^{n-1}}\\
=\ &\sum_{s=0}^{p-1}(u^{-sp^{n-1}}-1)^j(1+X)^{sp^{n-1}}\in p^{nj}\Zp[X]
    \end{align*}
    since $u\in 1+p\Zp$. Thus, the existence of $\tilde\Phi_{n,k-1}$ follows from Lemma~\ref{lem:PRtwists}. The last assertion is a consequence of Proposition~\ref{prop:Pn-integral}.
\end{proof}

\begin{lemma}\label{lem:Pn-lambda}
      For all $n\ge1$, $0\le i\le p-2$ and $\Up\in\{\alpha,\beta\}$, we have
    \[
    P_n(f,\Up,\omega^i)\equiv\frac{1}{\Up^{n+1}}P_n(f,\omega^i)-\frac{\epsilon(p)p^{k-2}}{\Up^{n+2}}\tilde\Phi_{n,k-1}\cdot P_{n-1}(f,\omega^i)\mod\omega_{n,k-1}.
    \]
\end{lemma}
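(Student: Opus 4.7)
The plan is to verify the congruence one factor at a time and then glue the pieces back together by the Chinese remainder theorem, since $\omega_{n,k-1}=\prod_{j=0}^{k-2}\omega_n(u^{-j}(1+X)-1)$. Concretely, for each integer $j$ with $0\le j\le k-2$, I will prove
\[
P_n(f,\Up,\omega^i)\equiv\frac{1}{\Up^{n+1}}P_n(f,\omega^i)-\frac{\epsilon(p)p^{k-2}}{\Up^{n+2}}\tilde\Phi_{n,k-1}\cdot P_{n-1}(f,\omega^i)\mod\omega_n(u^{-j}(1+X)-1),
\]
and then assemble these into the desired congruence mod $\omega_{n,k-1}$.

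The computation proceeds as follows. By the defining property of $P_n(f,\Up,\omega^i)$, the left-hand side is congruent to $Q_{n,j}(f,\Up,\omega^i)(u^{-j}(1+X)-1)$ modulo $\omega_n(u^{-j}(1+X)-1)$. Substituting the three-term identity from Lemma~\ref{lem:Qnj-lambda}, this becomes
\[
\frac{1}{\Up^{n+1}}Q_{n,j}(f,\omega^i)(u^{-j}(1+X)-1)-\frac{\epsilon(p)p^{k-2}}{\Up^{n+2}}\Phi_n(u^{-j}(1+X)-1)\cdot Q_{n-1,j}(f,\omega^i)(u^{-j}(1+X)-1).
\]
The first term can be replaced directly by $\Up^{-(n+1)}P_n(f,\omega^i)$ using the defining congruence for $P_n(f,\omega^i)$. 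For the second term, the defining congruence of $P_{n-1}(f,\omega^i)$ yields
\[
Q_{n-1,j}(f,\omega^i)(u^{-j}(1+X)-1)\equiv P_{n-1}(f,\omega^i)\mod\omega_{n-1}(u^{-j}(1+X)-1),
\]
which is weaker than we need, but multiplying through by $\Phi_n(u^{-j}(1+X)-1)$ and using the elementary identity $\Phi_n\cdot\omega_{n-1}=\omega_n$ promotes the congruence to modulus $\omega_n(u^{-j}(1+X)-1)$. Finally, Lemma~\ref{lem:Phi-n} allows us to replace $\Phi_n(u^{-j}(1+X)-1)$ by $\tilde\Phi_{n,k-1}$ modulo $\omega_n(u^{-j}(1+X)-1)$, completing the per-$j$ congruence.

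To conclude, it remains to observe that the $k-1$ polynomials $\omega_n(u^{-j}(1+X)-1)$ for $j=0,\dots,k-2$ are pairwise coprime in $E[X]$. Indeed, their roots are the elements $u^j\zeta-1$ with $\zeta^{p^n}=1$, and two such roots coincide precisely when $u^{j-j'}$ is a $p^n$-th root of unity, which under $|j-j'|<k-1\le p$ forces $j=j'$ since $u$ is a topological generator of $1+p\Zp$. Applying the Chinese remainder theorem then upgrades the simultaneous congruences to a single congruence modulo $\omega_{n,k-1}$, as required. The only mildly delicate point is the bootstrapping in Step~5, where one must pass from a congruence mod $\omega_{n-1}(u^{-j}(1+X)-1)$ to one mod $\omega_n(u^{-j}(1+X)-1)$ after multiplication by $\Phi_n(u^{-j}(1+X)-1)$; everything else is a direct assembly of Lemmas~\ref{lem:Qnj-lambda} and~\ref{lem:Phi-n} with the defining properties of the $P_n$'s.
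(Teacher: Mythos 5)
Your proof is correct and follows exactly the route the paper intends: the paper's own proof is the one-line remark that the lemma "follows from combining Lemmas~\ref{lem:Qnj-lambda} and~\ref{lem:Phi-n}," and your argument is precisely the natural expansion of that, checking the congruence factor by factor of $\omega_{n,k-1}$ and reassembling by the Chinese remainder theorem. The bootstrapping step via $\Phi_n\cdot\omega_{n-1}=\omega_n$ and the coprimality of the $\omega_n(u^{-j}(1+X)-1)$ are both handled correctly (the latter needs only that $u$ generates $1+p\Zp$ topologically, so the bound $|j-j'|<k-1$ is not actually required).
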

\begin{proof}
    This follows from combining Lemmas~\ref{lem:Qnj-lambda} and~\ref{lem:Phi-n}.
\end{proof}

\begin{corollary}\label{cor:integral-Pn-lambda}
Suppose $p>k-1$ and that $E/\Qp$ is unramified. For all $n\ge0$ and $0\le i\le p-2$, we have
\[ \left\| P_{n}(f,\Up,\omega^i) \right\| \le \max \left(  \left|\frac{1}{\Up^{n+1}}\right|_p,  \left|\frac{p^{k-2}}{\Up^{n+2}}\right|_p \right). \]
\end{corollary}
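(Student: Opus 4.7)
The plan is to combine the congruence in Lemma~\ref{lem:Pn-lambda} with the integrality results already obtained for its constituents, and then observe that Euclidean division by $\omega_{n,k-1}$ preserves the norm bound.

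First, I would set
\[
R_n := \frac{1}{\Up^{n+1}}\,P_n(f,\omega^i) - \frac{\epsilon_f(p)p^{k-2}}{\Up^{n+2}}\,\tilde\Phi_{n,k-1}\cdot P_{n-1}(f,\omega^i),
\]
so that Lemma~\ref{lem:Pn-lambda} gives $P_n(f,\Up,\omega^i)\equiv R_n \bmod \omega_{n,k-1}$. Under the standing hypotheses $p > k-1$ and $E/\Qp$ unramified, Corollary~\ref{cor:integral-Pn} yields $P_n(f,\omega^i), P_{n-1}(f,\omega^i)\in\cO[X]$, while Lemma~\ref{lem:Phi-n} yields $\tilde\Phi_{n,k-1}\in\Zp[X]$. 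The nebentypus value $\epsilon_f(p)$ is a root of unity, hence a $p$-adic unit, so the ultrametric inequality gives
\[
\|R_n\| \le \max\!\left(\left|\Up^{-(n+1)}\right|_p,\; \left|p^{k-2}\Up^{-(n+2)}\right|_p\right).
\]

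The remaining step is to upgrade this bound on $R_n$ to the same bound on $P_n(f,\Up,\omega^i)$. Since $\deg P_n(f,\Up,\omega^i) < (k-1)p^n = \deg\omega_{n,k-1}$, the polynomial $P_n(f,\Up,\omega^i)$ is precisely the Euclidean remainder of $R_n$ upon division by $\omega_{n,k-1}$. Now
\[
\omega_{n,k-1}=\prod_{j=0}^{k-2}\bigl(u^{-jp^n}(1+X)^{p^n}-1\bigr)
\]
has leading coefficient $u^{-p^n(k-1)(k-2)/2}\in\Zp^\times$. Euclidean division by such a polynomial, carried out iteratively on the leading monomial, only requires subtracting $\cO'$-scalar multiples of shifted copies of $\omega_{n,k-1}$; inverting a unit leading coefficient costs no $p$-adic valuation, and the ultrametric inequality ensures that each subtraction does not increase the $p$-adic norm of any coefficient. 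Hence $\|P_n(f,\Up,\omega^i)\|\le\|R_n\|$, which is the claimed bound.

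The only delicate point is the final norm-preservation statement, but once one records that the leading coefficient of $\omega_{n,k-1}$ lies in $\Zp^\times$, this reduces to the standard fact that Euclidean division by a polynomial with unit leading coefficient does not inflate the Gauss norm. Everything else is a direct assembly of Lemma~\ref{lem:Pn-lambda}, Corollary~\ref{cor:integral-Pn}, and Lemma~\ref{lem:Phi-n}.
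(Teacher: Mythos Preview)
Your proof is correct and follows essentially the same approach as the paper, which simply cites Corollary~\ref{cor:integral-Pn}, Lemma~\ref{lem:Phi-n}, and Lemma~\ref{lem:Pn-lambda}. You have made explicit the one step the paper leaves to the reader: since $\tilde\Phi_{n,k-1}\cdot P_{n-1}(f,\omega^i)$ can have degree $\ge (k-1)p^n$, the right-hand side $R_n$ need not itself be the degree-$<(k-1)p^n$ representative, and one must reduce modulo $\omega_{n,k-1}$; your observation that the leading coefficient of $\omega_{n,k-1}$ is a unit in $\Zp$ is exactly what is needed to ensure this reduction does not inflate the Gauss norm.
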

\begin{proof}
    This follows from combining Corollary~\ref{cor:integral-Pn} with Lemmas~\ref{lem:Phi-n} and \ref{lem:Pn-lambda}.
\end{proof}

In particular, $\| p^{\ord_p(\Up)n}P_{n}(f,\Up,\omega^i) \| = O(1)$. Combined with Lemma~\ref{lem:verify-PR}, we can apply Lemma~\ref{lem:PRtwists} after setting $h=k-1$, $r=\ord_p(\Up)$ (which is $<k-1$ as $f$ is non-ordinary at $p$), and $Q_{n,j}=Q_{n,j}(f,\Up,\omega^i)$. In particular, we can define $L_{p}(f,\Up,\omega^i,X)\in E'\llbracket X\rrbracket$ as the limit of the polynomials $P_n(f,\Up,\omega^i)$.
In other words,
\begin{equation}\label{eq:limit-Lp} 
L_{p}(f,\Up,\omega^i,X) = \lim_{n \to \infty} P_n(f,\Up,\omega^i),
\end{equation}
which is $O(\log^{\ord_p(\Up)})$. Furthermore,
\begin{equation}
\label{eq:padicL-cong}
L_{p}(f,\Up,\omega^i,X)\equiv P_n(f,\Up,\omega^i)\,\bmod\,\omega_{n,k-1}.    
\end{equation}

Let $L_p(f,\Up,X)\in E'[\Delta]\llbracket X \rrbracket$
denote the Amice transform of the $E'$-valued distribution constructed in \cite{amicevelu75,visik76,MTT}. 
The limit $L_{p}(f,\Up,\omega^i,X)$ given by \eqref{eq:limit-Lp} is the power series obtained from $L_p(f,\Up,X)$ after applying $\omega^i$ to $\Delta$.


\subsection{Construction of signed $p$-adic $L$-functions}
 We review the definition of the logarithmic matrix attached to $f$ at $p$ studied in \cite{BFSuper}. We write $\cO\llbracket\pi\rrbracket$ for the ring of power series in $\pi$, which is equipped with an $\cO$-linear operator $\varphi$ that sends $\pi$ to $(1+\pi)^p-1$ and an $\cO$-linear action by $\cG_\infty$ given by $\sigma\cdot\pi=(1+\pi)^{\chi_\cyc(\sigma)}-1$. The Mellin transform that sends $a\in\cO\llbracket\cG_\infty\rrbracket$ to $a\cdot (1+\pi)$ induces an isomorphism
\[
\fM:\cO\llbracket\cG_\infty \rrbracket\stackrel{\sim}{\longrightarrow}\cO\llbracket\pi\rrbracket^{\psi=0},\] where $\psi$ is a left-inverse of $\vp$.

\begin{defn}
Let $q=\vp(\pi)/\pi \in\cO\llbracket \pi \rrbracket$ and $\delta=p/(q-\pi^{p-1})\in\cO\llbracket \pi \rrbracket^\times$. We define 
\[
P_f=\begin{bmatrix}
    0&\frac{-1}{\epsilon_f(p)q^{k-1}}\\ \delta^{k-1}&\frac{a_p(f)}{\epsilon_f(p)q^{k-1}}
\end{bmatrix}.
\]
For $n\ge1$, we define $C_{n,f}$ to be the $2\times2$ matrix of polynomials of degree $<(k-1)p^n$ that coincide with the image of 
\[
\fM^{-1}\left((1+\pi)\vp^n(P_f^{-1})\cdots \vp(P_f^{-1})\right)\,\bmod\,\omega_{n,k-1}.
\]

We define
\[
A_f=\begin{bmatrix}
    0&\frac{-1}{\epsilon_f(p)p^{k-1}}\\ 1&\frac{a_p(f)}{\epsilon_f(p)p^{k-1}}
\end{bmatrix},\quad Q_f=\begin{bmatrix}
    \alpha &-\beta\\ -\alpha\beta&\alpha\beta
\end{bmatrix}.
\]
\end{defn}

\begin{proposition}\label{prop:log-matrix}
The determinant of $C_{n,f}$ is equal, up to a unit of $\cO\llbracket X \rrbracket^\times$, to $\omega_{n,k-1}/\Phi_{0,k-1}$.
    The sequence of matrices $A_f^{n+1}C_{n,f}$ converges to a matrix $M_{\log,f}$ defined over $E\llbracket X \rrbracket$. Furthermore, the entries in the first row of $Q_f^{-1}M_{\log,f}$ are elements of $E'\llbracket X\rrbracket$ that are $O(\log^{\ord_p(\alpha)})$, while those in the second row are $O(\log^{\ord_p(\beta)})$.
\end{proposition}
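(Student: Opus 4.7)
The plan is to handle the three assertions in sequence, beginning with a telescoping determinant calculation, then using the specialization $P_f|_{\pi=0}=A_f$ to establish convergence of $A_f^{n+1}C_{n,f}$, and finally exploiting the explicit diagonalization of $A_f$ by $Q_f$ to separate the two growth rates.

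For the determinant, a direct computation gives $\det P_f=\delta^{k-1}/(\epsilon_f(p)q^{k-1})$, hence $\det P_f^{-1}=\epsilon_f(p)q^{k-1}/\delta^{k-1}$. Since the determinant is multiplicative, the determinant of the product inside $\fM^{-1}$ equals $(1+\pi)^2\prod_{j=1}^n\varphi^j\left(\epsilon_f(p)q^{k-1}/\delta^{k-1}\right)$. Using $q=\varphi(\pi)/\pi$, the factors $\varphi^j(q)^{k-1}$ telescope to $(\varphi^{n+1}(\pi)/\varphi(\pi))^{k-1}$; the $\delta$-contributions can be rewritten through $p/\delta=q-\pi^{p-1}$ in terms of the $\varphi^j(\pi)$. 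Applying $\fM^{-1}$ and recalling that $\varphi^j(\pi)$ corresponds under the Mellin transform to $(1+X)^{p^j}-1$, which encodes the cyclotomic polynomials assembling $\omega_{n,k-1}$, the determinant of $C_{n,f}$ is identified with $\omega_{n,k-1}/\Phi_{0,k-1}$ up to a unit in $\cO\llbracket X\rrbracket^\times$.

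For the convergence, one first checks $P_f|_{\pi=0}=A_f$ via $q(0)=p$ and $\delta(0)=1$. Writing $P_f^{-1}=A_f^{-1}+\pi S$ for some $S\in\Mat_{2\times 2}(\cO\llbracket\pi\rrbracket[1/p])$, the differences $T_n-T_{n-1}$ of the partial products $T_n:=A_f^n\varphi^n(P_f^{-1})\cdots\varphi(P_f^{-1})$ take the form $A_f^n\,\varphi^n(\pi)\,\varphi^n(S)\,\varphi^{n-1}(P_f^{-1})\cdots\varphi(P_f^{-1})$. After applying $\fM^{-1}$, $\varphi^n(\pi)$ becomes $\omega_n$, which lies inside $\omega_{n-1,k-1}$; together with the determinant-based congruence this produces the compatibility $C_n\equiv C_{n-1}\pmod{\omega_{n-1,k-1}}$ required to apply Lemma~\ref{lem:PRtwists} and obtain the limit $M_{\log,f}\in\Mat_{2\times2}(E\llbracket X\rrbracket)$.

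For the growth rates, a short computation with $a_p(f)=\alpha+\beta$ and $\epsilon_f(p)p^{k-1}=\alpha\beta$ yields $Q_f^{-1}A_fQ_f=\mathrm{diag}(\alpha^{-1},\beta^{-1})$, so $Q_f^{-1}A_f^{n+1}C_{n,f}=\mathrm{diag}(\alpha^{-(n+1)},\beta^{-(n+1)})\,Q_f^{-1}C_{n,f}$. The entries of $C_{n,f}$, being polynomials of degree $<(k-1)p^n$ uniquely determined by their values at the twists $u^{-j}(1+X)-1$, satisfy a matrix analogue of Lemma~\ref{lem:verify-PR}, so Proposition~\ref{prop:Pn-integral}, applicable because $p>k-1$ by Assumption~\ref{asi:FL} and $E/\Qp$ is unramified by Assumption~\ref{asi:unram}, forces $\|C_{n,f}\|\le 1$. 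Consequently the first row of $Q_f^{-1}A_f^{n+1}C_{n,f}$ has norm $O(|\alpha|_p^{-n})=O(p^{n\ord_p(\alpha)})$; by $p$-non-ordinarity we have $\ord_p(\alpha)<k-1$, and a final invocation of Lemma~\ref{lem:PRtwists} with $r=\ord_p(\alpha)$ delivers the $O(\log^{\ord_p(\alpha)})$ bound on the limit, the second row being handled identically with $\beta$ in place of $\alpha$. The main technical obstacle is establishing the matrix analogue of Lemma~\ref{lem:verify-PR}: one must unpack the interpolation data defining $C_{n,f}$ directly from the Wach-module matrix $P_f$ and verify entry-by-entry that the appropriate $p$-scaled alternating sums of twists are $\cO$-integral, which is more delicate than the scalar case because of the denominators carried by $A_f^{-1}$ and the $(q-\pi^{p-1})$-type factors appearing in $P_f$.
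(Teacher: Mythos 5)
The paper offers no argument here at all---it simply cites \cite[Lemmas~2.6--2.8]{BFSuper}---and your overall strategy (telescoping $\det P_f^{-1}=\epsilon_f(p)q^{k-1}\delta^{1-k}$, the specialization $P_f|_{\pi=0}=A_f$, and the diagonalization $Q_f^{-1}A_fQ_f=\mathrm{diag}(\alpha^{-1},\beta^{-1})$, which you verify correctly) is indeed the one underlying the cited source. However, two of your steps do not work as written. The central problem is that $\fM$ is an isomorphism of $\cO\llbracket\cG_\infty\rrbracket$-\emph{modules}, not of rings: $\cO\llbracket\pi\rrbracket^{\psi=0}$ is not closed under multiplication, $\fM(ab)=a\cdot\fM(b)\neq\fM(a)\fM(b)$, and multiplication by $\varphi^j(\pi)$ on $\cO\llbracket\pi\rrbracket$ is not $\cG_\infty$-equivariant. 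Consequently $\det\bigl(\fM^{-1}(M)\bigr)\neq\fM^{-1}(\det M)$, and the assertion that ``$\varphi^j(\pi)$ corresponds under the Mellin transform to $(1+X)^{p^j}-1$'' is not meaningful ($\varphi^j(\pi)$ does not even lie in $\cO\llbracket\pi\rrbracket^{\psi=0}$). Both your determinant computation and the step ``after applying $\fM^{-1}$, $\varphi^n(\pi)$ becomes $\omega_n$'' rest on this. The correct route is to use that evaluation at a finite-order character twisted by $\chi_\cyc^j$ \emph{is} multiplicative and is computed, via the Amice transform, from values of the $\pi$-side entries at $\zeta-1$; since $\varphi^i(q)=\bigl((1+\pi)^{p^{i+1}}-1\bigr)/\bigl((1+\pi)^{p^i}-1\bigr)$ vanishes exactly when $(1+\pi)^{p^i}$ is a primitive $p$-th root of unity, one locates the zeros of $\det C_{n,f}$ at all characters of conductor $p^2,\dots,p^{n+1}$ twisted by $\chi_\cyc^j$ for $0\le j\le k-2$, and then pins down the remaining unit by a degree count against $\deg(\omega_{n,k-1}/\Phi_{0,k-1})=(k-1)(p^n-1)$ together with the mod-$\varpi$ shape of $C_{n,f}$. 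Likewise, the single factor $\varphi^n(\pi)$ in your $T_n-T_{n-1}$ only accounts for the $j=0$ twist, whereas the congruence you need is modulo the full product $\omega_{n-1,k-1}$ over all $k-1$ twists.

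On the last part, the ``main technical obstacle'' you flag is not actually an obstacle: since $\delta\in\cO\llbracket\pi\rrbracket^\times$, one has $P_f^{-1}=\left[\begin{smallmatrix}a_p(f)\delta^{1-k}&\delta^{1-k}\\-\epsilon_f(p)q^{k-1}&0\end{smallmatrix}\right]\in\Mat_2(\cO\llbracket\pi\rrbracket)$, so the product lies in $\Mat_2\bigl(\cO\llbracket\pi\rrbracket^{\psi=0}\bigr)$ and $C_{n,f}\in\Mat_2(\cO[X])$ automatically, with no appeal to Proposition~\ref{prop:Pn-integral} or to Assumptions~\ref{asi:FL}--\ref{asi:unram} (note the proposition is stated, and holds, without these hypotheses, so invoking them would weaken the statement). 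The genuinely missing verification lies elsewhere: to apply Lemma~\ref{lem:PRtwists} with $r=\ord_p(\alpha)$ to the entries of $Q_f^{-1}A_f^{n+1}C_{n,f}$, you must bound the alternating sums $p^{-j(n+1)}\sum_{t}(-1)^{j-t}\binom{j}{t}Q_{n,t}(u^{-t}(1+X)-1)$ attached to those entries---that is, prove the analogue of Lemma~\ref{lem:verify-PR}---and a bound on the sup-norm $\|C_{n,f}\|$ alone does not yield this, since the $O(\log^r)$ conclusion of Lemma~\ref{lem:PRtwists} is driven by the alternating-sum hypothesis $d_n\le p^{rn}$, not by $\|P_n\|$. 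Your sketch does not supply this verification, so the growth-rate claim remains unproved as written.
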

\begin{proof}
    See \cite[Lemmas 2.6-2.8]{BFSuper}.
\end{proof}

\begin{theorem}\label{thm:decomp}\label{lem:signed-cong}
  For all $0\le i\le p-2$, there exist $L_p(f,\sharp,\omega^i,X),L_p(f,\flat,\omega^i,X)\in \cO\llbracket X \rrbracket\otimes_{\cO}E$ such that 
\[
\frac{1}{\alpha-\beta}\cdot \begin{bmatrix}
   L_p(f,\alpha,\omega^i,X)\\ L_p(f,\beta,\omega^i,X)
\end{bmatrix}=
 Q_f^{-1}M_{\log,f}\begin{bmatrix}
    L_p(f,\sharp,\omega^i,X)\\ L_p(f,\flat,\omega^i,X)
\end{bmatrix}.
\]
Furthermore, for all $n\ge1$, we have
\[\begin{bmatrix}
  P_n(f,\omega^i) \\ -\epsilon(p)p^{k-2}\tilde\Phi_{n,k-1}P_{n-1}(f,\omega^i)
\end{bmatrix}
\equiv C_{n,f}\begin{bmatrix}
    L_p(f,\sharp,\omega^i,X)\\ L_p(f,\flat,\omega^i,X)\end{bmatrix}\,\bmod\,\omega_{n,k-1}.    
\]

\end{theorem}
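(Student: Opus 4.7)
I would prove the second congruence first, since the first identity then follows readily by passing to the limit. Set
\[
v_n := \begin{bmatrix} P_n(f,\omega^i) \\ -\epsilon(p)p^{k-2}\tilde\Phi_{n,k-1}P_{n-1}(f,\omega^i) \end{bmatrix}.
\]
Applying Lemma~\ref{lem:Pn-lambda} for both roots $\Upsilon\in\{\alpha,\beta\}$ and assembling the two congruences yields
\begin{equation}\label{eq:proposal-mt}
\begin{bmatrix} P_n(f,\alpha,\omega^i) \\ P_n(f,\beta,\omega^i) \end{bmatrix} \equiv \begin{bmatrix} 1/\alpha^{n+1} & 1/\alpha^{n+2} \\ 1/\beta^{n+1} & 1/\beta^{n+2} \end{bmatrix} v_n \equiv (\alpha-\beta)\, Q_f^{-1} A_f^{n+1} v_n \pmod{\omega_{n,k-1}},
\end{equation}
where the second equality uses the diagonalization $Q_f^{-1} A_f Q_f = \mathrm{diag}(1/\alpha,1/\beta)$ together with the formula $Q_f^{-1} = \tfrac{1}{\alpha-\beta}\bigl[\begin{smallmatrix}1&1/\alpha\\1&1/\beta\end{smallmatrix}\bigr]$.

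Proposition~\ref{prop:log-matrix} provides $\det C_{n,f} = u\cdot\omega_{n,k-1}/\Phi_{0,k-1}$ for some $u\in\cO\llbracket X\rrbracket^\times$, so $C_{n,f}$ becomes invertible after inverting $\Phi_{0,k-1}$. I would define $w_n := C_{n,f}^{-1}v_n$ in $\cO\llbracket X\rrbracket[\Phi_{0,k-1}^{-1}]^2/(\omega_{n,k-1})$ and argue that $w_n$ actually descends to a class in $(\cO\llbracket X\rrbracket\otimes_\cO E)^2/(\omega_{n,k-1})$; equivalently, that $\adj(C_{n,f})\cdot v_n$ is divisible by $\omega_{n,k-1}/\Phi_{0,k-1}$ modulo $\omega_{n,k-1}$. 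This integrality should follow by combining the Wach-module-theoretic shape of $C_{n,f}$ with the integral bounds on $P_n(f,\omega^i)$ and $\tilde\Phi_{n,k-1}$ coming from Corollary~\ref{cor:integral-Pn} and Lemma~\ref{lem:Phi-n}, respectively. Compatibility $w_{n+1}\equiv w_n\pmod{\omega_{n,k-1}}$ then follows automatically, since both $C_{n,f}$ and $v_n$ form compatible systems as $n$ varies (the former from its definition as a reduction of an infinite Frobenius product, the latter from the three-term recurrence in Lemma~\ref{lem:Pn-lambda}). The inverse limit then produces the desired elements $L_p(f,\sharp,\omega^i,X), L_p(f,\flat,\omega^i,X) \in \cO\llbracket X\rrbracket\otimes_\cO E$ satisfying the second congruence.

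With the second congruence in place, I obtain the first identity by multiplying through by $A_f^{n+1}$ and letting $n\to\infty$: the left-hand side $A_f^{n+1} C_{n,f}\bigl[\begin{smallmatrix}L_p(f,\sharp,\omega^i,X)\\L_p(f,\flat,\omega^i,X)\end{smallmatrix}\bigr]$ converges to $M_{\log,f}\bigl[\begin{smallmatrix}L_p(f,\sharp,\omega^i,X)\\L_p(f,\flat,\omega^i,X)\end{smallmatrix}\bigr]$ by Proposition~\ref{prop:log-matrix}, while by \eqref{eq:proposal-mt} and \eqref{eq:padicL-cong} the right-hand side $A_f^{n+1}v_n$ converges to $\tfrac{1}{\alpha-\beta}\,Q_f\bigl[\begin{smallmatrix}L_p(f,\alpha,\omega^i,X)\\L_p(f,\beta,\omega^i,X)\end{smallmatrix}\bigr]$. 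Multiplying by $Q_f^{-1}$ produces the desired formula. The main obstacle is establishing the integrality of $w_n$ modulo $\omega_{n,k-1}$: once this is confirmed, the remainder is bookkeeping with limits and the three-term relation. This is precisely the step where Assumptions~\ref{asi:FL} and \ref{asi:unram} enter the argument, through the norm bounds of Corollary~\ref{cor:integral-Pn-lambda}.
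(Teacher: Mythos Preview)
Your matrix identity \eqref{eq:proposal-mt} is correct and is exactly the computation the paper carries out. The difference is in the logical order, and your order contains a genuine gap.

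You claim that ``$C_{n,f}$ becomes invertible after inverting $\Phi_{0,k-1}$''. This is false. By Proposition~\ref{prop:log-matrix}, $\det C_{n,f}=u\cdot \omega_{n,k-1}/\Phi_{0,k-1}$ with $u\in\cO\llbracket X\rrbracket^\times$. In the ring $\cO\llbracket X\rrbracket[\Phi_{0,k-1}^{-1}]/(\omega_{n,k-1})$ the ideal $(\omega_{n,k-1})$ equals $(\omega_{n,k-1}/\Phi_{0,k-1})$, so $\det C_{n,f}$ is \emph{zero} in that quotient, not merely a non-unit. Hence $C_{n,f}$ is not invertible there, and the symbol $w_n:=C_{n,f}^{-1}v_n$ is meaningless. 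Phrasing the problem as ``$\adj(C_{n,f})\cdot v_n$ is divisible by $\omega_{n,k-1}/\Phi_{0,k-1}$ modulo $\omega_{n,k-1}$'' does not help: even when a solution $w_n$ to $C_{n,f}w_n\equiv v_n$ exists, it is not unique, so ``compatibility $w_{n+1}\equiv w_n$ follows automatically'' is also false. You would have to exhibit a specific compatible choice, and your sketch gives no mechanism for doing so.

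The paper avoids this by reversing the order: it first invokes \cite[Proposition~2.11]{BFSuper}, which uses the growth bounds on $L_p(f,\alpha,\omega^i,X)$, $L_p(f,\beta,\omega^i,X)$ and on the rows of $Q_f^{-1}M_{\log,f}$ (Proposition~\ref{prop:log-matrix}) to produce $L_p(f,\sharp,\omega^i,X),L_p(f,\flat,\omega^i,X)\in\cO\llbracket X\rrbracket\otimes_\cO E'$ satisfying the first identity. The second congruence is then derived from the first via precisely your computation \eqref{eq:proposal-mt}. Only after that does the paper run the inverse-limit argument $\varprojlim\cO\llbracket X\rrbracket^2/\ker h_n=\cO\llbracket X\rrbracket^2$ (\cite[Lemma~2.9]{BFSuper}) to descend from $E'$ to $E$; this step works because one already has a \emph{specific} element whose images form a compatible system, not because an arbitrary system of solutions can be assembled. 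Finally, note that Theorem~\ref{thm:decomp} does not use Assumptions~\ref{asi:FL} or~\ref{asi:unram}; those enter only in Theorem~\ref{thm:Lp-integral}.
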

\begin{proof}
Let $F_\Up=\frac{1}{\alpha-\beta}\cdot L_p(f,\Up,\omega^i,X)$. We can apply \cite[Proposition~2.11]{BFSuper} to the pair of elements $(F_\alpha,F_\beta)$ to deduce that there exist $L_p(f,\sharp,\omega^i,X), L_p(f,\flat,\omega^i,X)\in \cO\llbracket X\rrbracket\otimes_\cO E'$ such that
\[
\frac{1}{\alpha-\beta}\cdot \begin{bmatrix}
   L_p(f,\alpha,\omega^i,X)\\ L_p(f,\beta,\omega^i,X)
\end{bmatrix}=
 Q_f^{-1}M_{\log,f}\begin{bmatrix}
    L_p(f,\sharp,\omega^i,X)\\ L_p(f,\flat,\omega^i,X)
\end{bmatrix}.
\]

    A direct computation shows that
    \[
    \frac{1}{\alpha-\beta}\cdot A_f^{-n-1}Q_f=\frac{1}{\alpha-\beta}\cdot Q_f\begin{bmatrix}
        \alpha^{n+1}&0\\0&\beta^{n+1}
    \end{bmatrix}=\frac{1}{\alpha-\beta}\cdot\begin{bmatrix}
        \alpha^{n+2}&-\beta^{n+2}\\ -\alpha^{n+2}\beta&\alpha\beta^{n+2}
    \end{bmatrix}.
    \]
    Therefore, combined with \eqref{eq:padicL-cong} and Lemma~\ref{lem:Pn-lambda}, we deduce that
    \begin{align*}
     &   \frac{1}{\alpha-\beta}\cdot A_f^{-n-1}Q_f\begin{bmatrix}
   L_p(f,\alpha,\omega^i,X)\\ L_p(f,\beta,\omega^i,X)
\end{bmatrix}\\
\equiv\ &\frac{1}{\alpha-\beta}\cdot\begin{bmatrix}
        \alpha^{n+2}&-\beta^{n+2}\\ -\alpha^{n+2}\beta&\alpha\beta^{n+2}
    \end{bmatrix}\begin{bmatrix}
        \alpha^{-n-1}&\alpha^{-n-2}\\ \beta^{-n-1}&\beta^{-n-2}
    \end{bmatrix}\begin{bmatrix}
  P_n(f,\omega^i) \\ -\epsilon_f(p)p^{k-2}\tilde\Phi_{n,k-1}P_{n-1}(f,\omega^i)
\end{bmatrix}\,\bmod\,\omega_{n,k-1}\\
\equiv\ & \begin{bmatrix}
  P_n(f,\omega^i) \\ -\epsilon_f(p)p^{k-2}\tilde\Phi_{n,k-1}P_{n-1}(f,\omega^i)
\end{bmatrix}\,\bmod\,\omega_{n,k-1}.
    \end{align*}
Thus, it follows from Proposition~\ref{prop:log-matrix} that
\[\begin{bmatrix}
  P_n(f,\omega^i) \\ -\epsilon(p)p^{k-2}\tilde\Phi_{n,k-1}P_{n-1}(f,\omega^i)
\end{bmatrix}\equiv   A_f^{-n-1}Q_f Q_f^{-1}M_{\log,f}\begin{bmatrix}
    L_p(f,\sharp,\omega^i,X)\\ L_p(f,\flat,\omega^i,X)
\end{bmatrix}
\equiv C_{n,f}\begin{bmatrix}
    L_p(f,\sharp,\omega^i,X)\\ L_p(f,\flat,\omega^i,X)\end{bmatrix}\,\bmod\,\omega_{n,k-1}.    
\]

Since the left-hand side consists of elements belonging to $\varpi^{-s}\cO[X]$ for some integer $s$ that is independent of $n$, we deduce that
\[
\begin{bmatrix}
  \varpi^s  L_p(f,\sharp,\omega^i,X)\\\varpi^s L_p(f,\flat,\omega^i,X)\end{bmatrix}\in\varprojlim \cO\llbracket X\rrbracket^{\oplus 2}/\ker h_n=\cO\llbracket X\rrbracket^{\oplus 2},
\]
where $h_n$ denotes the map $ \cO\llbracket X\rrbracket^{\oplus 2}\rightarrow  \cO\llbracket X\rrbracket^{\oplus 2}/\omega_{n,k-1}$ given by the multiplication by $C_{n,f}$ and the last equality is given by \cite[Lemma~2.9]{BFSuper}. Thus,  $L_p(f,\sharp,\omega^i,X)$ and $L_p(f,\flat,\omega^i,X)$ belong to $\cO\llbracket X\rrbracket\otimes_\cO E$ (rather than $\cO\llbracket X\rrbracket\otimes_\cO E'$). 
\end{proof}

\begin{theorem}\label{thm:Lp-integral}
Suppose $p>k-1$ and that $E/\Qp$ is unramified. Then
\begin{align*}
L_p(f,\sharp,\omega^i,X),\ L_p(f,\flat,\omega^i,X) &\in \cO\llbracket X \rrbracket.
\end{align*}
\end{theorem}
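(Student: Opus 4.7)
The plan is to revisit the inverse limit argument at the end of the proof of Theorem~\ref{thm:decomp} and observe that, under the stronger hypotheses $p>k-1$ and $E/\Qp$ unramified, the integer $s$ appearing there can be taken to be $0$. Concretely, I would first invoke Corollary~\ref{cor:integral-Pn} to conclude that $P_n(f,\omega^i)\in\cO[X]$ for every $n\ge0$ and every $0\le i\le p-2$. Next, I would apply Lemma~\ref{lem:Phi-n} to deduce that $\tilde\Phi_{n,k-1}\in\Zp[X]$. Since $\epsilon_f(p)\in\cO^\times$, these two facts together imply that the column vector
\[
\begin{bmatrix}
P_n(f,\omega^i)\\[2pt]
-\epsilon_f(p)p^{k-2}\tilde\Phi_{n,k-1}P_{n-1}(f,\omega^i)
\end{bmatrix}
\]
belongs to $\cO[X]^{\oplus 2}$ for every $n\ge1$.

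Now I would use the congruence
\[
\begin{bmatrix}
P_n(f,\omega^i)\\[2pt]
-\epsilon_f(p)p^{k-2}\tilde\Phi_{n,k-1}P_{n-1}(f,\omega^i)
\end{bmatrix}
\equiv C_{n,f}\begin{bmatrix}
L_p(f,\sharp,\omega^i,X)\\[2pt]
L_p(f,\flat,\omega^i,X)
\end{bmatrix}\,\bmod\,\omega_{n,k-1}
\]
from Theorem~\ref{thm:decomp}. Writing $h_n:\cO\llbracket X\rrbracket^{\oplus 2}\to\cO\llbracket X\rrbracket^{\oplus 2}/\omega_{n,k-1}$ for the map induced by multiplication by $C_{n,f}$, the left-hand side now gives a compatible system of elements in $\cO[X]^{\oplus 2}\,\bmod\,\omega_{n,k-1}$, so the pair $(L_p(f,\sharp,\omega^i,X),L_p(f,\flat,\omega^i,X))$ defines an element of
\[
\varprojlim_n\, \cO\llbracket X\rrbracket^{\oplus 2}/\ker h_n \;=\; \cO\llbracket X\rrbracket^{\oplus 2},
\]
where the last identification is \cite[Lemma~2.9]{BFSuper}. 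This immediately yields the claimed integrality.

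The main point where one has to be careful is the passage from the congruences modulo $\omega_{n,k-1}$ to the inverse limit: one needs that the left-hand sides assemble into a genuine element of $\varprojlim_n \cO\llbracket X\rrbracket^{\oplus 2}/\ker h_n$, which in turn relies on the integrality of $P_n(f,\omega^i)$ and $\tilde\Phi_{n,k-1}$ being uniform in $n$. Both uniformities are already built into Corollary~\ref{cor:integral-Pn} and Lemma~\ref{lem:Phi-n} via Proposition~\ref{prop:Pn-integral} (the key step there being the bound $\|P_n\|\le p^{(h-1)/(p-1)}<p$ combined with unramifiedness, which forces $\|P_n\|\le 1$). Beyond these inputs, the argument is purely formal and no new estimate is needed.
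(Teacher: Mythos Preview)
There is a genuine gap in the passage from ``the left-hand side lies in $\cO[X]^{\oplus2}\bmod\omega_{n,k-1}$'' to ``the pair $(L_p(f,\sharp,\omega^i,X),L_p(f,\flat,\omega^i,X))$ defines an element of $\varprojlim_n \cO\llbracket X\rrbracket^{\oplus2}/\ker h_n$''. The inverse limit $\varprojlim_n \cO\llbracket X\rrbracket^{\oplus2}/\ker h_n$ is, via $h_n$, identified with $\varprojlim_n \mathrm{image}(h_n)$, and $\mathrm{image}(h_n)$ is a \emph{proper} $\cO$-submodule of $(\cO[X]/\omega_{n,k-1})^{\oplus2}$ because $\det C_{n,f}$ is (up to a unit) $\omega_{n,k-1}/\Phi_{0,k-1}$, which is a zero-divisor modulo $\omega_{n,k-1}$. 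Knowing only that the left-hand side is integral does not place it in $\mathrm{image}(h_n)$; you only know it lies in $\mathrm{image}(h_n\otimes E)$, since the preimage $v=(L_p^\sharp,L_p^\flat)$ is a priori in $E\llbracket X\rrbracket^{\oplus2}$. In the proof of Theorem~\ref{thm:decomp} this issue is harmless, because the goal there is only to land in $\cO\llbracket X\rrbracket\otimes_\cO E$ (bounded denominators and $E$-rationality), and the particular value of $s$ is irrelevant. Here, by contrast, you need $s=0$ on the nose, and that is exactly what the inverse limit does not give you.

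The paper closes this gap by a direct norm argument that unpacks the structure of $C_{n,f}$ modulo $\varpi$: since $a_p(f)\equiv0\bmod\varpi$, the matrix $C_{n,f}$ reduces to a product of antidiagonal matrices, hence has the shape $\begin{smallpmatrix}*\Phi_{n,k-1}^- & 0\\ 0 & *\Phi_{n,k-1}^+\end{smallpmatrix}$ or its transpose depending on the parity of $n$. If, say, $\|L_p^\sharp\|\ge\|L_p^\flat\|$ and $\|L_p^\sharp\|>1$, then for even $n\gg0$ the first entry of $C_{n,f}v$ has the same norm as $L_p^\sharp$, forcing $\|P_n(f,\omega^i)\|>1$ and contradicting Corollary~\ref{cor:integral-Pn}. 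This mod-$\varpi$ analysis is precisely the missing ingredient; without it (or an equivalent statement that $\mathrm{coker}\,h_n$ is $\varpi$-torsion-free), your argument does not go through.
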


\begin{proof}
We consider two separate cases, either $\| L_p(f,\sharp,\omega^i,X) \| \ge \| L_p(f,\flat,\omega^i,X) \| $, or $\| L_p(f,\sharp,\omega^i,X) \| \le \| L_p(f,\flat,\omega^i,X) \| $.

We first consider the case where $\| L_p(f,\sharp,\omega^i,X) \| \ge \| L_p(f,\flat,\omega^i,X) \|$ and suppose for contradiction that $\| L_p(f,\sharp,\omega^i,X) \| > 1$.
Let $P_n(f,\natural,\omega^i)$ be the image of $L_p(f,\natural,\omega^i,X)$ modulo $\omega_{n,k-1}$. When $n$ is sufficiently large, $\| L_p(f,\natural,\omega^i,X) \| = \| P_n(f,\natural,\omega^i) \|$.

 From the definition of $P_f$ and \cite[Lemma~2.6]{BFSuper} it follows that
\[
C_{n,f}\equiv \begin{bmatrix}
    0&*\\
    *\Phi_{n,k-1}&0
\end{bmatrix}\cdots \begin{bmatrix}
    0&*\\
    *\Phi_{1,k-1}&0
\end{bmatrix}\mod\varpi,
\]
where $*$ represents a unit in $\cO_f\llbracket X \rrbracket$. Let $\Phi_{n,k-1}^+=\displaystyle\prod_{m\le n,\text{even}}\Phi_{m,k-1}$ and $\Phi_{n,k-1}^-=\displaystyle\prod_{m\le n,\text{odd}}\Phi_{m,k-1}$. If $n$ is odd,
\[
C_{n,f}\equiv \begin{bmatrix}
    0&*\Phi_{n,k-1}^+\\
    *\Phi_{n,k-1}^-&0
\end{bmatrix}\mod\varpi,
\]
whereas when $n$ is even
\[
C_{n,f}\equiv \begin{bmatrix}
    *\Phi_{n,k-1}^-&0\\
    0&*\Phi_{n,k-1}^+
\end{bmatrix}\mod\varpi.
\]
When $n$ is even and sufficiently large, we have
\[
C_{n,f}\begin{bmatrix}
    P_{n}(f,\sharp,\omega^i)\\P_{n}(f,\flat,\omega^i)
\end{bmatrix}=\begin{bmatrix}
   (*\Phi_{n,k-1}^-+\varpi \Theta)P_n(f,\sharp,\omega^i)+ \varpi\Xi P_n(f,\flat,\omega^i)\\ \Psi
\end{bmatrix}
\]
for some $\Theta,\Xi\in \cO[X]$ and $\Psi\in E[X]$. Since $\Phi_{n,k-1}^-$ is monic, we have
\[
\|\varpi\Xi P_n(f,\flat,\omega^i)\| < \| L_p(f,\flat,\omega^i,X) \| \le \| L_p(f,\sharp,\omega^i,X) \| = \|(*\Phi_{n,k-1}^-+\varpi \Theta)P_n(f,\sharp,\omega^i)\|.
\]
Therefore, the strong triangle inequality implies that
\[
  \|(*\Phi_{n,k-1}^-+\varpi \Theta)P_n(f,\sharp,\omega^i)+ \varpi\Xi P_n(f,\flat,\omega^i)\| = \|(*\Phi_{n,k-1}^-+\varpi \Theta)P_n(f,\sharp,\omega^i)\|=\|L_p(f,\sharp,\omega^i,X) \| > 1.
\]
Combined with Theorem~\ref{thm:decomp}, we deduce 
$\|P_n(f,\omega^i)\| > 1$. This contradicts Corollary~\ref{cor:integral-Pn}.

If $\| L_p(f,\flat,\omega^i,X) \| \ge \| L_p(f,\sharp,\omega^i,X) \| $, we suppose $\| L_p(f,\flat,\omega^i,X) \|>1$ and  deduce a similar contradiction by considering an odd $n$ that is sufficiently large. Therefore, we conclude that in both cases, we have $\| L_p(f,\sharp,\omega^i,X) \|, \|L_p(f,\flat,\omega^i,X) \| \le 1$, as required.
\end{proof}

\section{Congruence between signed $p$-adic $L$-functions}\label{sec:3}

Let $f$ and $g$ be the two normalized cuspidal eigenforms defined in the introduction. 
For a prime $v$, we define 
\[
\cE_v(f)=\cP_v(f,v^{-1}\gamma_v)\in \cO\llbracket \cG_\infty\rrbracket=\cO[\Delta]\llbracket G_\infty\rrbracket,
\]
where $\cP_v(f,X)\in\cO[X]$ is defined as in \eqref{eq:Euler} and $\gamma_v\in\cG_\infty$ sends $\zeta\in\mu_{p^{\infty}}$ to $\zeta^v$. For $i\in\{0,\dots,p-2\}$, we define
$\mathcal{E}_v(f, \omega^i)\in\cO\llbracket X\rrbracket$ to be the image of $\cE_v(f)$ after applying $\omega^i$ to $\Delta$. 

If $\Sigma$ is a finite set of primes, let $Q_{\Sigma,n}(f, \omega^i) = Q_n(f, \omega^i) \times \prod_{v \in \Sigma} \mathcal{E}_{v}(f, \omega^i)\mod\omega_n$ be the $\Sigma$-imprimitive version of the polynomial $Q_n$ introduced in Section~\ref{sec:2}. 

\begin{remark}\label{rmk:imp-dep}
\textup{
Take $\Sigma = \Sigma_0$, the finite set of primes dividing $N_f N_g$. We recall that $Q_n(f,j)$ is the image of $\theta_{n,j}(f,\omega^i)$ under the identification \eqref{eq:identification}. We also recall that we chose $u_f^\pm, u_g^\pm = 1$ in the proof of Proposition~\ref{prop:L-cong} so that $\Omega_f^\pm = \Omega_F^\pm$ and $\Omega_g^\pm = \Omega_G^\pm$. We can therefore infer that $Q_{\Sigma_0,n}(f,\omega^i)$ and $Q_n(F,\omega^i)$ interpolate the same $L$-values, so they are congruent to each other modulo $\omega_n$. By the uniqueness of the Chinese remainder theorem, we have $P_{\Sigma_0,n}(f,\omega^i) = P_n(F,\omega^i)$.
}
\end{remark}

From this point onward, we assume that Assumptions~\ref{asi:FL} and~\ref{asi:unram} hold. As $E/\Qp$ is unramified, we can take $\varpi=p$.

\begin{proposition}\label{prop:cong-Pn}
Let $f$ and $g$ be modular forms of weight $k \geq 2$ satisfying Assumptions~\ref{asi:main}-\ref{asi:unram}. Then for all $n \geq 1$ and $0 \leq i \leq p-2$, we have
\[ P_{\Sigma_0,n}(f,\omega^i) \equiv P_{\Sigma_0,n}(g,\omega^i) \,\bmod\,p^r \cdot \omega_{n,k-1} \cdot \mathcal{O}[X]. \]
\end{proposition}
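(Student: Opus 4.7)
The plan is to reduce the proposition to the ``depleted'' statement $P_n(F,\omega^i) \equiv P_n(G,\omega^i) \bmod p^r \cO[X]$, where $F=f_{\Sigma_0}$ and $G=g_{\Sigma_0}$ are the $\Sigma_0$-depletions; this reduction is immediate from Remark~\ref{rmk:imp-dep}. The essential input is the pointwise congruence $\varphi_F^\pm \equiv \varphi_G^\pm \bmod p^r$ as elements of $\mathrm{Symb}(\Gamma,L_{k-2}(\cO))_{\mathfrak{m}}$, which underlies \eqref{eq:congvarphi}. (Recall that, in the paragraph preceding \eqref{eq:congvarphi}, one passes from the cohomological congruence $\delta(\varphi_F^\pm) \equiv \delta(\varphi_G^\pm)$ to the modular-symbol level via the non-Eisenstein nature of $\mathfrak{m}$, so boundary ambiguities vanish and the congruence lifts pointwise.) Evaluating both sides at $\begin{smallpmatrix}1&-a\\0&p^{n+1}\end{smallpmatrix}(\{\infty\}-\{0\})$ and assembling the corresponding theta elements, I obtain $Q_{n,j}(F,\omega^i) \equiv Q_{n,j}(G,\omega^i) \bmod p^r \cO[X]$ for every $0\le j\le k-2$ (both representatives having degree $<p^n$, so no ambiguity modulo $\omega_n$ arises).

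Next, I would repeat verbatim the computation of Lemma~\ref{lem:verify-PR} applied to the differences. The manipulation there shows that
\[
\sum_{t=0}^{j}(-1)^{j-t}\binom{j}{t}\left[Q_{n,t}(F,\omega^i)-Q_{n,t}(G,\omega^i)\right]\bigl(u^{-t}(1+X)-1\bigr)
\]
can be rewritten as a sum over $a\in(\ZZ/p^{n+1}\ZZ)^\times$ of $p^{(n+1)j}/\hat a^{j}$ times the $z^j$-component of $(\varphi_F^{(-1)^i}-\varphi_G^{(-1)^i})([\infty]-[a/p^{n+1}])$, weighted by $\omega^i(a)(1+X)^{m(a)}$. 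The pointwise modular-symbol congruence forces this $z^j$-component to lie in $p^r\cO$, so the alternating sum has norm $\le p^{-(n+1)j-r}$.

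Finally, I would invoke Proposition~\ref{prop:Pn-integral} for the differences, using Assumptions~\ref{asi:FL} and~\ref{asi:unram}, taking $d_n=p^{-r}$ in Lemma~\ref{lem:PRtwists}. Tracing through the proof of Proposition~\ref{prop:Pn-integral}, the factor $p^{-r}$ carries uniformly through: one obtains $\|S_\ell(X)\|\le \rho^{-\ell}p^{-r}$, and hence $\|P_n(F,\omega^i)-P_n(G,\omega^i)\|\le \rho^{-(k-2)}p^{-r}\le p^{(k-2)/(p-1)}p^{-r}<p^{1-r}$. Because $E/\Qp$ is unramified this norm is an integral power of $p$, forcing it to be $\le p^{-r}$; that is exactly the asserted inclusion in $p^r\cO[X]$, which implies the congruence modulo the ideal $p^r\omega_{n,k-1}\cO[X]+p^r\cO[X]=p^r\cO[X]$ (both of the polynomials involved have degree $<(k-1)p^n$).

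The main obstacle is the first step: articulating cleanly that the modular-symbol congruence behind \eqref{eq:congvarphi} is in fact \emph{pointwise}, not merely valid after summing over $a$. One must be careful that the lift from $H^1_P$ back to $\mathrm{Symb}(\Gamma,L_{k-2}(\cO))_{\mathfrak{m}}$ really occurs at the localized level and that the Hecke-Eisenstein dichotomy rules out a boundary-symbol correction. Once this pointwise statement is in hand, the remainder of the argument is a straightforward rescaling of the norm estimates already proved in Section~\ref{sec:2}.
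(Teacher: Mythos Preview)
Your proposal is correct and follows essentially the same approach as the paper's proof: both reduce to the depleted forms via Remark~\ref{rmk:imp-dep}, use the pointwise modular-symbol congruence underlying \eqref{eq:congvarphi} to run the computation of Lemma~\ref{lem:verify-PR} on the difference $F-G$, and then apply Proposition~\ref{prop:Pn-integral} with $d_n=p^{-r}$. Your version is somewhat more explicit in tracing the factor $p^{-r}$ through the norm estimates and in flagging that the lift from $H_P^1$ to $\mathrm{Symb}$ must be pointwise, but these are elaborations rather than departures from the paper's argument.
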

\begin{proof}
We have shown in \eqref{eq:congvarphi} that for each $n \geq 1$,
\[ \sum_{a \in (\ZZ/p^{n+1}\ZZ)} \varphi_F \bigg| \begin{pmatrix} 1 & -a\\ 0 & p^n \end{pmatrix} ( \{\infty\} - \{0\} ) \equiv \sum_{a \in (\ZZ/p^{n+1}\ZZ)} \varphi_G \bigg| \begin{pmatrix} 1 & -a\\ 0 & p^n \end{pmatrix} ( \{\infty\} - \{0\} )\,\bmod\,p^r. \]

If we set $Q_{n,j}$ as $Q_{n,j}(F,\omega^i)-Q_{n,j}(G,\omega^i)$ and replace the modular form $f$ in the proof of Lemma~\ref{lem:verify-PR} by $F-G$, we deduce that 
\[ \left\| \sum_{t=0}^j (-1)^{j-t}\binom{j}{t}Q_{n,t}(u^{-t}(1+X)-1)\right\| \le p^{-(n+1)j-r}.\]
Therefore, we can apply Proposition~\ref{prop:Pn-integral} with $d_n=p^{-r}$ to deduce that  $\| P_n(F, \omega^i) - P_n(G, \omega^i) \| \leq p^{-r}$. Hence, the proposition follows from Remark~\ref{rmk:imp-dep}.
\end{proof}

\begin{lemma}\label{lem:eps-cong}
If $f$ and $g$ are $p^r$-congruent modular forms, then $\epsilon_f(p) \equiv \epsilon_g(p)\,\bmod\,p^r$.
\end{lemma}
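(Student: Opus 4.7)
The plan is to reduce the congruence on nebentypus values at $p$ to a congruence on nebentypus values at an auxiliary prime $\ell \neq p$, where the Hecke recursion combined with Assumption~\ref{asi:main} directly applies.

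The first step uses the standard Hecke relation $T_{\ell}^{2} - T_{\ell^{2}} = \ell^{k-1}\langle\ell\rangle$ for primes $\ell \nmid N = \mathrm{lcm}(N_f, N_g)$. Applied to the eigenforms $f$ and $g$, this gives
\[
\ell^{k-1}\epsilon_{\ast}(\ell) = a_{\ell}(\ast)^{2} - a_{\ell^{2}}(\ast), \qquad \ast \in \{f,g\}.
\]
By Assumption~\ref{asi:main}, both $a_{\ell}(f) \equiv a_{\ell}(g)$ and $a_{\ell^{2}}(f) \equiv a_{\ell^{2}}(g)$ modulo $p^{r}$, so subtracting the two relations yields $\ell^{k-1}\bigl(\epsilon_f(\ell) - \epsilon_g(\ell)\bigr) \equiv 0 \bmod p^{r}$. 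If $\ell$ is further assumed coprime to $p$, then $\ell^{k-1}$ is a unit of $\mathcal{O}$, and we obtain $\epsilon_f(\ell) \equiv \epsilon_g(\ell) \bmod p^{r}$.

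The second step is to transport this information from $\ell$ to $p$ via Dirichlet's theorem on primes in arithmetic progressions. Under the running hypothesis $p \nmid N$, the residue class of $p$ in $(\mathbb{Z}/N\mathbb{Z})^{\times}$ is a unit, so there exist infinitely many primes $\ell \neq p$ with $\ell \equiv p \bmod N$. Any such $\ell$ is coprime to $Np$, so the congruence $\epsilon_f(\ell) \equiv \epsilon_g(\ell) \bmod p^{r}$ obtained above applies. Since $\epsilon_f$ and $\epsilon_g$ are Dirichlet characters of conductors dividing $N_f \mid N$ and $N_g \mid N$ respectively, we have $\epsilon_f(\ell) = \epsilon_f(p)$ and $\epsilon_g(\ell) = \epsilon_g(p)$. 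Chaining these together,
\[
\epsilon_f(p) = \epsilon_f(\ell) \equiv \epsilon_g(\ell) = \epsilon_g(p) \bmod p^{r},
\]
which is precisely the claim. There is no real obstacle; the only mild subtlety is that one cannot simply take $\ell = p$ in the Hecke relation, because $p^{k-1}$ is far from a unit and only yields $\epsilon_f(p) \equiv \epsilon_g(p) \bmod p^{r-k+1}$, which is weaker than desired. The Dirichlet trick is the device that circumvents exactly this loss.
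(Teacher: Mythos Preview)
Your proof is correct and follows essentially the same route as the paper: use the Hecke recursion $a_{\ell^2}=a_\ell^2-\epsilon(\ell)\ell^{k-1}$ together with Assumption~\ref{asi:main} to deduce $\epsilon_f(\ell)\equiv\epsilon_g(\ell)\bmod p^r$ for primes $\ell\nmid pN_fN_g$, and then invoke Dirichlet's theorem to pick $\ell\equiv p\bmod N_fN_g$ so that $\epsilon_f(\ell)=\epsilon_f(p)$ and $\epsilon_g(\ell)=\epsilon_g(p)$. The only cosmetic difference is that you work modulo $N=\mathrm{lcm}(N_f,N_g)$ whereas the paper uses $N_fN_g$; either choice works.
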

\begin{proof}
The hypothesis that $f$ and $g$ are $p^r$-congruent implies that
\[
a_{\ell^2}(f)\equiv a_\ell(f)^2-\epsilon_f(\ell)\ell^{k-1}\equiv a_{\ell^2}(g)\equiv a_\ell(g)^2-\epsilon_g(\ell)\ell^{k-1}\,\bmod\,p^r
\]
for all $\ell\nmid N_fN_g$. Therefore, for all $\ell\nmid pN_fN_g$, we have
\[
\epsilon_f(\ell)\equiv\epsilon_g(\ell)\,\bmod\,p^r.
\]
By Dirichlet's theorem, there exists a prime number $\ell\ne p$ that is congruent to $p\mod N_fN_g$. For any such $\ell$, we have $\epsilon_f(\ell)=\epsilon_f(p)$ and $\epsilon_g(\ell)=\epsilon_g(p)$. Hence, the lemma follows.
\end{proof}

\begin{lemma}\label{lem:cong-Cn}
If $f$ and $g$ are $p^r$-congruent modular forms, then we have for $n \geq 1$, an entry-wise congruence
\[ C_{n,f} \equiv C_{n,g}\,\bmod\,p^r \cdot \omega_{n,k-1}. \]
\end{lemma}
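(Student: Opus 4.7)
The plan is to propagate the $\bmod\, p^r$ congruence from the Hecke data of $f$ and $g$ through the explicit construction of $C_{n,f}$ and $C_{n,g}$. The key preliminary step is to show that $P_f^{-1}$ and $P_g^{-1}$ are congruent modulo $p^r$ entrywise as matrices over $\mathcal{O}\llbracket \pi\rrbracket$. A direct computation of the inverse yields
\[
P_f^{-1} = \begin{bmatrix} a_p(f)\,\delta^{-(k-1)} & \delta^{-(k-1)} \\ -\epsilon_f(p)\,q^{k-1} & 0 \end{bmatrix},
\]
and the elements $q$ and $\delta$ do not depend on the form. Since $\delta^{-1} = (q-\pi^{p-1})/p$ has constant term $1$, the element $\delta$ is a unit of $\mathcal{O}\llbracket \pi\rrbracket$, so all four entries lie in $\mathcal{O}\llbracket \pi\rrbracket$. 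Because $p \nmid N_fN_g$ (implicit in the Fontaine--Laffaille setting of this section), the index $p$ is covered by Assumption~\ref{asi:main}, giving $a_p(f) \equiv a_p(g) \bmod p^r$, and Lemma~\ref{lem:eps-cong} gives $\epsilon_f(p) \equiv \epsilon_g(p) \bmod p^r$. The required entrywise congruence of $P_f^{-1}$ and $P_g^{-1}$ follows at once.

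Next, since $\vp$ is a ring endomorphism of $\mathcal{O}\llbracket \pi\rrbracket$ and matrix multiplication is bilinear, the $\bmod\, p^r$ congruence is preserved both by applying $\vp^m$ to each factor and by taking the product. Hence
\[
\vp^n(P_f^{-1})\cdots \vp(P_f^{-1}) \equiv \vp^n(P_g^{-1})\cdots \vp(P_g^{-1}) \pmod{p^r}.
\]
Left multiplication by $(1+\pi)$ and application of $\fM^{-1}$ are both $\mathcal{O}$-linear operations, so they preserve the congruence, yielding the analogous congruence in $\mathcal{O}\llbracket \cG_\infty\rrbracket$. Reducing modulo $\omega_{n,k-1}$ and taking the unique polynomial representative of degree $<(k-1)p^n$, we conclude that $C_{n,f} - C_{n,g}$ lies in $p^r \cdot \mathcal{O}[X]$ with degree strictly less than $\deg \omega_{n,k-1}$, which is exactly the stated congruence.

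The only delicate point is the verification that $\delta$ is a unit of $\mathcal{O}\llbracket \pi\rrbracket$, so that the entries of $P_f^{-1}$ are integral and the mod $p^r$ congruence is meaningful at that level; once this is in hand, every subsequent step (applying $\vp^m$, multiplying matrices, applying $\fM^{-1}$, reducing modulo $\omega_{n,k-1}$) is formal and manifestly preserves $p^r$-congruences.
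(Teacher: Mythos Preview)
Your proof is correct and follows essentially the same approach as the paper. The paper's own proof is a single sentence: since $a_p(f)\equiv a_p(g)$ and $\epsilon_f(p)\equiv\epsilon_g(p)\bmod p^r$, the result follows immediately from the definition of $C_{n,f}$ and $C_{n,g}$. Your version simply unpacks this, making explicit the computation of $P_f^{-1}$, the fact that $\delta\in\mathcal{O}\llbracket\pi\rrbracket^\times$ (so that the entries are integral), and the preservation of the $p^r$-congruence under $\vp$, matrix multiplication, $\fM^{-1}$, and reduction modulo the monic polynomial $\omega_{n,k-1}$.
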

\begin{proof}
As $a_p(f)\equiv a_p(g),\epsilon_f(p)\equiv\epsilon_g(p)
\mod p^r$, the lemma follows immediately from the definition of $C_{n,f}$ and $C_{n,g}$.
\end{proof}

We define for $\natural \in \{ \sharp, \flat \}$, the $\Sigma_0$-imprimitive signed $p$-adic $L$-function 
\begin{equation}\label{eq:Lp-imp}
L_{p,\Sigma_0}(f,\natural,\omega^i,X) := L_p(f,\natural,\omega^i,X) \times \prod_{v \in \Sigma_0} \mathcal{E}_v(f,\omega^i).
\end{equation}

\begin{theorem}\label{thm:cong-Lp}
Let $f$ and $g$ be $p^r$-congruent modular forms of weight $k \geq 2$ satisfying Assumptions~\ref{asi:main}-\ref{asi:unram}. Then for $0 \leq i \leq p-2$, we have a pair of congruences
\begin{align*}
    L_{p,\Sigma_0}(f,\sharp,\omega^i,X) &\equiv L_{p,\Sigma_0}(g,\sharp,\omega^i,X) \,\bmod\,p^{r} \cdot \mathcal{O}\llbracket X \rrbracket,\\
    L_{p,\Sigma_0}(f,\flat,\omega^i,X) &\equiv L_{p,\Sigma_0}(g,\flat,\omega^i,X) \,\bmod\,p^{r} \cdot \mathcal{O}\llbracket X \rrbracket.
\end{align*}
\end{theorem}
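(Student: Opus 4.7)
The plan is to bootstrap the two ingredients already assembled---the polynomial congruence of Proposition~\ref{prop:cong-Pn} and the matrix congruence of Lemma~\ref{lem:cong-Cn}---from the level of $P_{\Sigma_0,n}$ and $C_{n,f}$ up to the signed $p$-adic $L$-functions themselves, using the decomposition of Theorem~\ref{thm:decomp} and an inverse-limit argument that mirrors the one at the end of that proof.

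First, I would apply Theorem~\ref{thm:decomp} to $f$ and $g$ in their $\Sigma_0$-imprimitive form. Multiplying both sides of the identity of Theorem~\ref{thm:decomp} by $\prod_{v\in\Sigma_0}\mathcal{E}_v(f,\omega^i)$ (which is integral in $\mathcal{O}\llbracket X\rrbracket$) and using Remark~\ref{rmk:imp-dep} to identify the result with $P_{\Sigma_0,n}(f,\omega^i)$, one gets
\[
C_{n,f}\begin{bmatrix}L_{p,\Sigma_0}(f,\sharp,\omega^i,X)\\ L_{p,\Sigma_0}(f,\flat,\omega^i,X)\end{bmatrix}\equiv\begin{bmatrix}P_{\Sigma_0,n}(f,\omega^i)\\ -\epsilon_f(p)p^{k-2}\tilde\Phi_{n,k-1}\,P_{\Sigma_0,n-1}(f,\omega^i)\end{bmatrix}\bmod\omega_{n,k-1},
\]
and analogously for $g$. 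Writing $\Delta^{\natural}:=L_{p,\Sigma_0}(f,\natural,\omega^i,X)-L_{p,\Sigma_0}(g,\natural,\omega^i,X)$ for $\natural\in\{\sharp,\flat\}$, I subtract the two identities and regroup as
\[
C_{n,f}\begin{bmatrix}\Delta^\sharp\\ \Delta^\flat\end{bmatrix}\equiv\bigl(\vec P_f-\vec P_g\bigr)-(C_{n,f}-C_{n,g})\begin{bmatrix}L_{p,\Sigma_0}(g,\sharp,\omega^i,X)\\ L_{p,\Sigma_0}(g,\flat,\omega^i,X)\end{bmatrix}\bmod\omega_{n,k-1}.
\]

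Second, I would check that the entire right-hand side is divisible by $p^r$. The top component of $\vec P_f-\vec P_g$ is congruent to zero modulo $p^r$ directly by Proposition~\ref{prop:cong-Pn}. The bottom component rewrites as
\[
p^{k-2}\,\tilde\Phi_{n,k-1}\bigl[(\epsilon_g(p)-\epsilon_f(p))\,P_{\Sigma_0,n-1}(g,\omega^i)+\epsilon_f(p)\bigl(P_{\Sigma_0,n-1}(g,\omega^i)-P_{\Sigma_0,n-1}(f,\omega^i)\bigr)\bigr],
\]
both summands of which are $\equiv 0\bmod p^r$ by Lemma~\ref{lem:eps-cong} and Proposition~\ref{prop:cong-Pn} respectively. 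Meanwhile $(C_{n,f}-C_{n,g})\vec L_g\equiv0\bmod p^r$, since the entries of the matrix difference lie in $p^r\mathcal{O}\llbracket X\rrbracket/\omega_{n,k-1}$ by Lemma~\ref{lem:cong-Cn} and Lemma~\ref{lem:eps-cong}, while $\vec L_g\in\mathcal{O}\llbracket X\rrbracket^{\oplus 2}$ by Theorem~\ref{thm:Lp-integral} combined with the integrality of the Euler factors $\mathcal{E}_v(g,\omega^i)$. Combining these, I obtain
\[
C_{n,f}\begin{bmatrix}\Delta^\sharp\\ \Delta^\flat\end{bmatrix}\equiv 0\bmod\bigl(p^r,\,\omega_{n,k-1}\bigr)\qquad\text{for every }n\ge 1.
\]

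The main obstacle is the last step: passing from this uniform annihilation statement to the conclusion $\Delta^\sharp,\Delta^\flat\in p^r\mathcal{O}\llbracket X\rrbracket$. My plan is to reduce the whole setup modulo $p^r$ and reproduce, over the coefficient ring $\mathcal{O}/p^r\mathcal{O}$, the inverse-limit argument from the end of the proof of Theorem~\ref{thm:decomp}: the vector $\vec\Delta$ (already integral) has image in $(\mathcal{O}/p^r\mathcal{O})\llbracket X\rrbracket^{\oplus 2}$ lying in $\bigcap_n\ker h_n$, where $h_n$ is multiplication by $C_{n,f}$ modulo $\omega_{n,k-1}$; the $\mathcal{O}/p^r\mathcal{O}$-analogue of \cite[Lemma~2.9]{BFSuper}, which applies because $\det C_{n,f}$ is a unit multiple of $\omega_{n,k-1}/\Phi_{0,k-1}$ (Proposition~\ref{prop:log-matrix}) and this unit and the quotient behave well under reduction mod $p^r$, then forces $\vec\Delta\equiv 0\bmod p^r$. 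This yields exactly the pair of congruences in the statement.
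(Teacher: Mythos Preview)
Your setup through the displayed congruence $C_{n,f}\vec\Delta\equiv 0\bmod(p^r,\omega_{n,k-1})$ is correct and arguably cleaner than the paper's. But your final step has a real gap: the determinant condition you cite does \emph{not} suffice to deduce $\vec\Delta\equiv 0\bmod p^r$. Over any base ring, multiplying $C_{n,f}v\equiv 0\bmod\omega_{n,k-1}$ by the adjugate and using $\det C_{n,f}=u_n\cdot\omega_{n,k-1}/\Phi_{0,k-1}$ yields only $\Phi_{0,k-1}\mid v$, a fixed-degree divisibility that cannot be iterated; the intersection $\bigcap_n\ker h_n$ is not forced to vanish by the determinant alone.

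What actually makes the injectivity work is the explicit shape of $C_{n,f}$ modulo $p$ --- the (anti-)diagonal form with entries $*\Phi^\pm_{n,k-1}$ displayed in the proof of Theorem~\ref{thm:Lp-integral}. Over $\mathcal{O}/p$ this forces each component of $v$ to be divisible by $\Phi_{0,k-1}\Phi^\mp_{n,k-1}$, polynomials of unbounded degree, hence $v=0$; one then lifts to $\mathcal{O}/p^r$ by a d\'evissage through the $p$-torsion. The paper carries out exactly this, packaged as a direct norm estimate paralleling the proof of Theorem~\ref{thm:Lp-integral}: assuming $\|\mathbf L_\sharp\|>p^{-r}$ (without loss of generality $\|\mathbf L_\sharp\|\ge\|\mathbf L_\flat\|$), the mod-$p$ structure of $C_{n,h}$ combined with Lemma~\ref{lem:cong-Cn} and the strong triangle inequality forces $\|P_{\Sigma_0,n}(f,\omega^i)-P_{\Sigma_0,n}(g,\omega^i)\|>p^{-r}$, contradicting Proposition~\ref{prop:cong-Pn}. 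So your route is viable, but the step you gloss over is precisely where the paper's argument lives.
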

\begin{proof}
The proof resembles that of Theorem~\ref{thm:Lp-integral}. Let $P_{\Sigma_0,n}(\ast,\natural,\omega^i)$ be the image modulo $\omega_{n,k-1}$ of $L_{p,\Sigma_0}(f,\natural,\omega^i,X)$, $\natural \in\{\sharp, \flat$\}, $\ast\in\{f,g\}$. Set
\[
\mathbf{L}_\natural :=  L_{p,\Sigma_0}(f,\natural,\omega^i,X) - L_{p,\Sigma_0}(g,\natural,\omega^i,X) .
\]
Choose $n$ sufficiently large so that $$\| \mathbf{L}_\natural \| = \| P_{n,\Sigma_0}(f,\natural,\omega^i) - P_{n,\Sigma_0}(g,\natural,\omega^i) \|$$ for both choices of $\natural$. Let us assume without loss of generality that $\| \mathbf{L}_\sharp \| \ge \| \mathbf{L}_\flat \|$ and suppose that $\| \mathbf{L}_\sharp \| > p^{-r}$.

For $n \gg 0$ even and $h \in \{ f, g \}$, we can write as in the proof of Theorem~\ref{thm:Lp-integral},
\[
C_{n,h}\begin{bmatrix}
    P_{\Sigma_0,n}(h,\sharp,\omega^i)\\ P_{\Sigma_0,n}(h,\flat,\omega^i)
\end{bmatrix}
=\begin{bmatrix}
   (U_h\Phi_{n,k-1}^- + p \Theta_h) P_{\Sigma_0,n}(h,\sharp,\omega^i)+ p \Xi_h P_{\Sigma_0,n}(h,\flat,\omega^i)\\ \Psi_h
\end{bmatrix},
\]
where $U_h\in\cO\llbracket X\rrbracket^\times$, $\Theta_h,\Xi_h\in\cO[X]$.
By Lemma~\ref{lem:cong-Cn}, $\star_f\equiv \star_g\mod p^r$ for $\star\in\{U,\Theta,\Xi\}$. Thus, the first row of
\begin{equation}\label{diff-Cn}
C_{n,f}\begin{bmatrix}
    P_{\Sigma_0,n}(f,\sharp,\omega^i)\\ P_{\Sigma_0,n}(f,\flat,\omega^i)
\end{bmatrix}-
C_{n,g}\begin{bmatrix}
    P_{\Sigma_0,n}(g,\sharp,\omega^i)\\ P_{\Sigma_0,n}(g,\flat,\omega^i)
\end{bmatrix}.
\end{equation}
is equal to
\begin{gather*}
(U_g \Phi_{n,k-1}^- + p \Theta_g) \left( P_{\Sigma_0,n}(f, \sharp, \omega^i) - P_{\Sigma_0,n}(g, \sharp, \omega^i) \right) + p \Xi_g \left( P_{\Sigma_0,n}(f, \flat, \omega^i) - P_{\Sigma_0,n}(g, \flat, \omega^i) \right)\\
 + p^r \left( S P_{\Sigma_0,n}(f, \sharp, \omega^i) - T P_{\Sigma_0,n}(f, \flat, \omega^i) \right)
\end{gather*}
for some $S, T \in \mathcal{O}[X]$. We deduce that
\begin{itemize}
	\item{$\|p^r (S P_{\Sigma_0,n}(f, \sharp, \omega^i) - T P_{\Sigma_0,n}(f, \flat, \omega^i))\|  \leq p^{-r} < \|\mathbf{L}_\sharp\|$;}
	\item{$\|p \Xi_g(P_{\Sigma_0,n}(f, \flat, \omega^i) - P_{\Sigma_0,n}(g, \flat, \omega^i))\| < \| \mathbf{L}_\flat\| \leq \|\mathbf{L}_\sharp\|$, by assumption;}
	\item{$\|\mathbf{L}_\sharp\| = \|(U_g \Phi_{n,k-1}^- + p \Theta_g) (P_{\Sigma_0,n}(f, \sharp, \omega^i) - P_{\Sigma_0,n}(g, \sharp, \omega^i))\|$, since $\Phi_{n,k-1}^-$ is monic.}
\end{itemize}
By the strong triangle inequality, $\| \text{first row of \eqref{diff-Cn}} \|=\|\mathbf{L}_\sharp\| > p^{-r}$. As a consequence of Theorem~\ref{thm:decomp}, we obtain $\|\mathbf{L}_\sharp\|=\|P_{\Sigma_0,n}(f, \omega^i) - P_{\Sigma_0,n}(g, \omega^i)\| > p^{-r}$. This contradicts Proposition~\ref{prop:cong-Pn}.  Therefore, we have $\|\mathbf{L}_\sharp\|\le p^{-r}$.

If instead we have $\| \mathbf{L}_\sharp \| \leq \| \mathbf{L}_\flat \|$, we assume $\|\mathbf{L}_\flat\|>p^r$; we can repeat the proof above but with $n \gg 0$ odd to deduce a contradiction.
\end{proof}


\section{Iwasawa invariants of the signed $p$-adic $L$-functions}\label{sec:4}
In this section, we prove Theorems~\ref{thmA}-\ref{thmC} presented in the introduction. Throughout, we assume that Assumption~\ref{asi:FL}-\ref{asi:cong} hold, which implies that Assumption~\ref{asi:main} holds for $\varpi=p$ and $r=1$.

We first recall the definition of Iwasawa invariants.
\begin{defn}
\textup{
For a non-zero element $\cF(X)= \sum_{m\ge0} c_m X^m\in\cO\llbracket X\rrbracket$, the $\mu$-invariant and $\lambda$-invariant of $\cF$ is defined as
\begin{align*}
	\mu(\cF) &:=  \min_m \ord_p(c_m); \ \text{and}\\
	\lambda(\cF) &: = \min\{ m: \ord_p(c_m) = \mu(\cF)\}.
\end{align*}
Given a finitely generated torsion $\cO\llbracket X\rrbracket$-module $M$, it is pseudo-isomorphic to a direct sum of cyclic modules $\bigoplus_i \cO\llbracket X\rrbracket/(\cF_i)$. Its characteristic ideal is given by
\[
\Char_{\cO\llbracket X\rrbracket}(M):=\left(\prod_i\cF_i\right)\subseteq\cO\llbracket X\rrbracket.
\]
We define the Iwasawa invariants of $M$ by $\mu(M):=\mu(\prod_i\cF_i)$ and $\lambda(M):=\lambda(\prod_i\cF_i)$.
}
    
\end{defn}

\subsection{Analytic side}\label{sec:analytic}

\begin{defn}\label{def:an-invs}
\textup{
For $0 \leq i \leq p-2$ and $\natural\in\{\sharp,\flat\}$ satisfying Assumption~\ref{asi:Lp-nonzero}, we define the analytic Iwasawa invariants
\begin{align*}
	\mu(f, \omega^i)_\text{\rm an}^{\natural} &:= \mu\left(L_p(f, \natural, \omega^i, X) \right),\\
	\lambda(f, \omega^i)_\text{\rm an}^{\natural} &: = \lambda(L_p(f, \natural, \omega^i, X) ).
\end{align*}
}
\end{defn}


We now prove Theorem~\ref{thmA}.

\begin{theorem}\label{thm:analytic}
Let $f$, $g$, $\natural \in \{ \sharp, \flat \}$ and $i\in\{0,\dots, p-2\}$ be chosen so that Assumptions~\ref{asi:FL}-\ref{asi:cong} hold. Let $\Sigma_0$ be the finite set of primes dividing $N_f N_g$. Then $\mu(f, \omega^i)_\text{\rm an}^\natural=0$ if and only if $\mu(g, \omega^i)_\text{\rm an}^\natural=0$, in which case
\[ 
\lambda(f, \omega^i)_\text{\rm an}^\natural = \lambda(g, \omega^i)_\text{\rm an}^\natural + \sum_{v \in \Sigma_0} \left(\mathbf{e}_v (g, \omega^i) - \mathbf{e}_v (f, \omega^i)\right), 
\]
where $\mathbf{e}_v(\ast,\omega^i)$ is the $\lambda$-invariant of $\cE_v(\ast,\omega^i)$.
\end{theorem}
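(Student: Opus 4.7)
The plan is to reduce Theorem~\ref{thm:analytic} to the mod $p$ congruence of $\Sigma_0$-imprimitive signed $p$-adic $L$-functions provided by Theorem~\ref{thm:cong-Lp}, and then apply the standard multiplicativity of Iwasawa $\mu$- and $\lambda$-invariants. First I would observe that Assumption~\ref{asi:cong} implies Assumption~\ref{asi:main} with $r=1$ (as noted in the introduction), so Theorem~\ref{thm:cong-Lp} yields
\[
L_{p,\Sigma_0}(f,\natural,\omega^i,X) \equiv L_{p,\Sigma_0}(g,\natural,\omega^i,X) \,\bmod\, p\cdot \mathcal{O}\llbracket X \rrbracket.
\]
Under Assumption~\ref{asi:Lp-nonzero}, both sides are non-zero elements of $\mathcal{O}\llbracket X \rrbracket$, so their reductions in $\mathbb{F}\llbracket X \rrbracket$, where $\mathbb{F} = \mathcal{O}/p\mathcal{O}$, are equal. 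This immediately forces $\mu(L_{p,\Sigma_0}(f,\natural,\omega^i)) = 0$ if and only if $\mu(L_{p,\Sigma_0}(g,\natural,\omega^i)) = 0$, and in this case the $\lambda$-invariants agree, since each equals the $X$-adic order of the common reduction.

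Next I would invoke the defining factorization \eqref{eq:Lp-imp},
\[
L_{p,\Sigma_0}(\ast,\natural,\omega^i,X) = L_p(\ast,\natural,\omega^i,X) \cdot \prod_{v \in \Sigma_0} \mathcal{E}_v(\ast,\omega^i), \qquad \ast \in \{f,g\},
\]
combined with the fact that $\mu(\mathcal{E}_v(\ast,\omega^i)) = 0$ for each $v \in \Sigma_0$. This vanishing holds because $v \neq p$ and $\mathcal{P}_v(\ast,X) \in \mathcal{O}[X]$ has constant term $1$, so $\mathcal{E}_v(\ast,\omega^i)$ is a product of elements of the form $1-u\gamma_v^j$ with $u\in \mathcal{O}^\times$, none of which is divisible by $p$. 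By multiplicativity of $\mu$-invariants, this gives
\[
\mu(L_{p,\Sigma_0}(\ast,\natural,\omega^i)) = \mu(\ast,\omega^i)_{\mathrm{an}}^\natural,
\]
and combined with the previous paragraph we obtain the equivalence $\mu(f,\omega^i)_{\mathrm{an}}^\natural = 0 \iff \mu(g,\omega^i)_{\mathrm{an}}^\natural = 0$.

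Finally, assuming both $\mu$-invariants vanish, multiplicativity of $\lambda$-invariants (which is valid whenever all factors have $\mu = 0$) gives
\[
\lambda(L_{p,\Sigma_0}(\ast,\natural,\omega^i)) = \lambda(\ast,\omega^i)_{\mathrm{an}}^\natural + \sum_{v \in \Sigma_0} \mathbf{e}_v(\ast,\omega^i),
\]
where $\mathbf{e}_v(\ast,\omega^i) = \lambda(\mathcal{E}_v(\ast,\omega^i))$. Equating the expressions for $\ast=f$ and $\ast=g$ (which agree by the mod $p$ congruence) and rearranging produces the stated formula. The main subtle point I expect to require care is the identification $\mathbf{e}_v(\ast,\omega^i) = \lambda(\mathcal{E}_v(\ast,\omega^i))$ together with the vanishing of $\mu(\mathcal{E}_v(\ast,\omega^i))$ at primes $v \in \Sigma_0$ where the local Galois representations of $f$ and $g$ may genuinely differ; this is nonetheless a purely local computation that decouples from the non-ordinary Iwasawa theory, and the remainder of the argument is a formal consequence of the congruence in Theorem~\ref{thm:cong-Lp} together with the Weierstrass preparation theorem.
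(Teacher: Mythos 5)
Your proposal is correct and follows essentially the same route as the paper: reduce to the mod $p$ congruence of Theorem~\ref{thm:cong-Lp}, use the factorization \eqref{eq:Lp-imp} together with the vanishing of $\mu(\mathcal{E}_v(\ast,\omega^i))$, and conclude by additivity of Iwasawa invariants via Weierstrass preparation. The only cosmetic difference is that you verify $\mu(\mathcal{E}_v(\ast,\omega^i))=0$ directly from the shape of the Euler factor (the Frobenius eigenvalues at $v\neq p$ being $p$-adic units), whereas the paper cites \cite[Lemma~3.7.4]{EPW} for this fact.
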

\begin{proof}
It follows from \cite[Lemma~3.7.4]{EPW} that $\mathcal{E}_v(f, \omega^i)$ has trivial $\mu$-invariant. Thus, the first assertion of the theorem follows from Theorem~\ref{thm:cong-Lp} and \eqref{eq:Lp-imp}. To prove the second assertion, we note that since $L_{p,\Sigma_0}(f, \natural, \omega^i, X)$ is an element of $\cO\llbracket X\rrbracket$, we can apply the Weierstrass Preparation Theorem, which allows us to write it as $p^{\mu(f)_\mathrm{an}^\natural} \cdot D(X) \cdot U(X)$, where $U(X) \in \cO\llbracket X\rrbracket^\times$ while $D(X)$ is a distinguished polynomial in $\mathcal{O}[X]$. The lambda invariant $\lambda(f)_\text{\rm an}^\natural$ corresponds to the degree of $D(X)$, which is determined by the image of $L_p(f, \natural, \omega^i, X)$ modulo $p$. Computing the $\lambda$-invariant of $L_p(f,\natural,\omega^i,X)$ and $L_p(g,\natural,\omega^i,X)$ using the identity \eqref{eq:Lp-imp}, and then relating the two using Theorem~\ref{thm:cong-Lp}, we obtain
\[
\lambda(f, \omega^i)_\text{\rm an}^\natural + \sum_{v \in \Sigma_0} \lambda(\mathcal{E}(f, \omega^i)) = \lambda(g, \omega^i)_\text{\rm an}^\natural + \sum_{v \in \Sigma_0} \lambda(\mathcal{E}(g, \omega^i)),
\]
which concludes the proof.
\end{proof}

\subsection{Algebraic side} To prove Theorems~\ref{thmB} and \ref{thmC}, we need to put in place a few more definitions. As before, we introduce objects associated with the modular form $f$ with the understanding that the ones for $g$ are defined analogously. 

Let $V_f$ be the Deligne $p$-adic representation attached to a modular form $f$ constructed in \cite{deligne69} and let $T_f \subset V_f$ be a $G_{\Qp}$-stable $\cO$-lattice. We write $W_f = \Hom(T_f,E/\cO)(1)$, where the symbol $(m)$ represents the $m$-th Tate twist. 

Let $\Sigma$ be any finite set of primes containing $p$ and $\infty$, as well as the primes dividing $N_f$. Let $K$ be a number field. We write $K_\Sigma$ for the maximal extension of $K$ unramified outside of $\Sigma$. The \emph{$p$-primary Selmer group} of $f$ over a number field $K$ is given by the kernel of a local-global map
\begin{align*}
\Sel_p(K,W_f)
&= \ker\left( H^1(K, W_f) \to \prod_v \frac{H^1(K_v, W_f)}{H_f^1(K_v, W_f)} \right)\\
&\cong \ker\left( H^1(K_\Sigma/K, W_f) \to \prod_{\mathfrak{v} \mid v \in \Sigma} \frac{H^1(K_\mathfrak{v}, W_f)}{H_f^1(K_\mathfrak{v}, W_f)} \right),
\end{align*}
where
$H_f^1(K_\mathfrak{v}, W_f)$ is the Bloch--Kato subgroup defined in \cite[\S3]{blochkato}. If $\mathcal{K}$ is an infinite algebraic extension of $\QQ$, we define $\Sel_p(\mathcal{K},W_f)=\varinjlim_{K} \Sel_p(K,W_f)$, where $K$ runs over finite extensions of $\QQ$ contained inside $\mathcal{K}$ and the connecting maps are restrictions. 

While the $\cO\llbracket G_\infty\rrbracket$-module $\Sel_p(\QQ_\cyc,W_f)^\vee$ is finitely generated, it is not torsion when $f$ is non-ordinary at $p$. We recall the definition of sharp/flat Selmer groups, whose duals are expected to be torsion over $\cO\llbracket G_\infty\rrbracket$. They can be considered as the algebraic counterparts of the $p$-adic $L$-functions we studied previously. 

Let $\fp$ denote the unique prime of $\QQ_\infty$ lying above $p$. Then $\QQ_{\infty,\fp}$ can be identified with $\Qp(\mu_{p^\infty})$. We write
\[
\HIw(\Qp(\mu_{p^\infty}),T_f)=\varprojlim_n H^1(\Qp(\mu_n),T_f),
\]
where the connecting maps are corestrictions. As explained in \cite[\S3.2]{LLZ0} and \cite[\S2.3]{BFSuper}, the logarithmic matrix $M_{\log,f}$ decomposes the Perrin-Riou map on $\HIw(\Qp(\mu_{p^\infty},T_f)$ into a pair of Coleman maps
\[
\col^\natural:\HIw(\Qp(\mu_{p^\infty}),T_f)\longrightarrow \cO\llbracket \cG_\infty\rrbracket,
\]
where $\natural\in\{\sharp,\flat\}$. Furthermore, \cite[Theorem~2.14]{BFSuper} says that for $0\le i\le p-2$, there exists an explicit product of linear polynomials $\xi_{i,\natural}\in\cO[X]$ such that
\[
e_{\omega^i}\cdot\image(\col^\natural) \subset \xi_{i,\natural} \cdot \cO\llbracket G_\infty\rrbracket
\]
and the containment is of finite index. An explicit description of $\xi_{i,\natural}$ is given in \cite[\S5A]{LLZ0.5} (see also \cite[Appendix A]{harronlei}). Note that it only depends on $k$ and $p$.

\begin{defn}\label{def:selmergrps}
\textup{
The \emph{sharp and flat Selmer groups} of $f$ over $\QQ_\infty$ is given by
\[
\Sel_p^{\sharp/\flat}( \QQ_\infty,W_f) = \ker\left( \Sel_p( \QQ_\infty,W_f) \to  \frac{H^1(\QQ_{\infty,\mathfrak{p}},W_f)}{H_f^1(\QQ_{\infty,\mathfrak{p}}, W_f)^{\sharp/\flat}} \right),
\]
where $\fp$ denotes the unique prime of $\QQ_\infty$ above $p$ and  $H_f^1(\QQ_{\infty,\mathfrak{p}},W_f )^{\sharp/\flat}$ is defined to be the orthogonal complement of $\ker(\col^{\sharp/\flat})$ with respect to the local Tate pairing $\HIw(\QQ_{\infty,\fp},T_f) \times H^1(\QQ_{\infty,\fp},W_f) \rightarrow E/\cO$. We write $\mathcal{X}^{\sharp/\flat}(f)$ to denote the Pontryagin dual of $\Sel_p^{\sharp/\flat}(\QQ_\infty,W_f)$. For an integer $0\le i\le p-2$, we write $\mathcal{X}^{\sharp/\flat}(f)^{\omega^i}$ for the $\omega^i$-isotypic component of $\mathcal{X}^{\sharp/\flat}(f)$, which is a finitely generated $\cO\llbracket G_\infty\rrbracket$-module.
}
\end{defn}

\begin{remark}\label{rk:cotorsion}
\textup{
Recall from \cite[proof of Theorem 6.5]{LLZ0} that Assumption~\ref{asi:Lp-nonzero} implies that $\mathcal{X}^{\sharp/\flat}(f)^{\omega^i}$ is torsion over $\cO\llbracket G_\infty\rrbracket$. Note that Assumption~\ref{asi:Lp-nonzero} holds for any choice of $i$ and $\natural$ if $k>2$ or $a_p(f)=0$ (see \cite[Corollary~3.29 and Remark~3.30]{LLZ0}). 
}
\end{remark}

 Kato's main conjecture without $p$-adic zeta function, as formulated in \cite{kato04} is equivalent to:
\begin{conj}\label{IMC}
For $\natural\in\{\sharp,\flat\}$ and $0\le i \le p-2$, there is an equality of $\cO\llbracket G_\infty\rrbracket$-ideals
    \[
    \Char_{\cO\llbracket G_\infty\rrbracket}\left(\cX^\natural(f)^{\omega^i}\right)=\left(\frac{L_p(f,\natural,\omega^i,X)}{\xi_{i,\natural}}\right).
    \]
\end{conj}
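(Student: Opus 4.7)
The plan is to treat Conjecture~\ref{IMC} in the standard two-divisibility manner, recognizing at the outset that as stated it is open in general, and that the practical role of this paper is to propagate it rather than to establish it from scratch.

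For the first divisibility,
\[
\Char_{\cO\llbracket G_\infty\rrbracket}\bigl(\cX^\natural(f)^{\omega^i}\bigr) \supseteq \left(\frac{L_p(f,\natural,\omega^i,X)}{\xi_{i,\natural}}\right),
\]
I would invoke Kato's Euler system. Kato's zeta element $\mathbf{z}_f \in \HIw(\Qp(\mu_{p^\infty}),T_f)$ has the key property that its image under the signed Coleman map $\col^\natural$ agrees, on the $\omega^i$-isotypic component and up to the explicit polynomial factor $\xi_{i,\natural}$, with $L_p(f,\natural,\omega^i,X)$. This interpolation property is already built into the construction of $L_p(f,\natural,\omega^i,X)$ in Section~\ref{sec:2}, so it only remains to combine Kato's global Euler-system bound on the strict Selmer group with a Poitou--Tate duality computation exchanging the strict Selmer condition at $p$ for the signed one (compare the argument in \cite[\S6]{LLZ0}). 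This yields the Kato divisibility, and in fact does so unconditionally on $\mu$-invariants.

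The reverse divisibility is the true obstacle and is genuinely open in the non-ordinary non-CM case. It is known in the CM case through Pollack--Rubin, and in certain residually reducible or Eisenstein-congruence settings via Skinner--Urban / Wan-type methods on unitary Shimura varieties. The natural strategy suggested by the paper itself is to bypass the direct attack altogether: identify a ``seed'' form $f_0$ residually congruent to $f$ for which the full main conjecture is already known, and then transport it to $f$ via Theorems~\ref{thmA}--\ref{thmC}. Because that transport requires the vanishing of both the algebraic and analytic $\mu$-invariants for $f_0$ and for $f$, the outstanding technical hurdles become twofold: producing a suitable seed $f_0$ in the residual class, and verifying that $\mu(\ast,\omega^i)^\natural_{\mathrm{an}} = \mu(\ast,\omega^i)^\natural_{\mathrm{alg}} = 0$ for $\ast\in\{f,f_0\}$, after which the equality of characteristic ideals in Conjecture~\ref{IMC} propagates automatically.
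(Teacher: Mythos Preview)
The statement is a \emph{conjecture}; the paper does not prove it and offers no proof to compare against. Your proposal correctly recognizes this and your summary of the expected two-divisibility strategy is sound and aligned with how the paper uses the conjecture: the Kato divisibility is recorded (in the paper) as Proposition~\ref{prop:one-inclusion}, and the propagation mechanism you describe is exactly Theorem~\ref{thm:IMC}.

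One small correction: you claim the Kato divisibility ``does so unconditionally on $\mu$-invariants''. In the paper's normalization this is not quite right. The $p$-adic $L$-functions here are built from cohomological periods $\Omega_f^\pm$, whereas the reference \cite{LLZ0} uses a different period; the two differ by a $p$-adic constant, so Kato's bound a priori yields only
\[
p^{n_i^\natural}\cdot\frac{L_p(f,\natural,\omega^i,X)}{\xi_{i,\natural}}\in \Char_{\cO\llbracket G_\infty\rrbracket}\bigl(\cX^\natural(f)^{\omega^i}\bigr)
\]
for some integer $n_i^\natural$ (see the proof of Proposition~\ref{prop:one-inclusion}). The hypothesis $\mu(f,\omega^i)^\natural_{\mathrm{an}}=\mu(f,\omega^i)^\natural_{\mathrm{alg}}=0$ is precisely what lets one drop this extraneous power of $p$. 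So in this paper's setup the Kato divisibility is not unconditional; it requires the vanishing of $\mu$.
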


\begin{defn}\label{def:alg-invs}
\textup{If Assumption~\ref{asi:Lp-nonzero} holds for $0 \leq i \leq p-2$ and $\natural\in\{\sharp,\flat\}$, we define the following algebraic Iwasawa invariants:
\begin{align*}
	\mu(f, \omega^i)_\text{\rm alg}^{\natural} &:= \mu\left( \mathcal{X}^\natural(f)^{\omega^i} \right) ,\\
	\lambda(f, \omega^i)_\text{\rm alg}^{\natural} &:= \lambda\left(  \mathcal{X}^\natural(f)^{\omega^i} \right) .
\end{align*}
}
\end{defn}

\begin{proposition}\label{prop:one-inclusion}
If Assumption~\ref{asi:Lp-nonzero} holds and $\mu(f,\omega^i)^\natural_\mathrm{alg}=\mu(f,\omega^i)^\natural_\mathrm{an}=0$, then
    \[
\frac{L_p(f,\natural,\omega^i,X)}{\xi_{i,\natural}}\in \Char_{\cO\llbracket G_\infty\rrbracket}\cX^\natural(f)^{\omega^i}.
    \]
\end{proposition}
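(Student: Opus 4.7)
The plan is to deduce this one-sided divisibility from Kato's theorem on the Iwasawa main conjecture without $p$-adic $L$-functions (as established in \cite{kato04}), combined with the factorization of the Perrin--Riou regulator via the signed Coleman maps $\col^\sharp$ and $\col^\flat$ recalled in \S\ref{sec:4}. The starting point is the key compatibility
\[
\col^\natural(z_f)^{\omega^i} \;=\; u \cdot \xi_{i,\natural}\cdot L_p(f,\natural,\omega^i,X),
\]
for a unit $u\in\cO\llbracket G_\infty\rrbracket^\times$, where $z_f\in\HIw(\QQ_\mathrm{cyc},T_f)$ denotes Kato's zeta element. This identity is built into the construction of the signed $p$-adic $L$-functions in \cite{LLZ0,LLZ0.5,BFSuper}: the decomposition of the Perrin--Riou map via $M_{\log,f}$ (Proposition~\ref{prop:log-matrix}) sends Kato's zeta element to the pair $(L_p(f,\sharp,\omega^i,X),L_p(f,\flat,\omega^i,X))$, and the image after applying $\col^\natural$ lands in $\xi_{i,\natural}\cdot\cO\llbracket G_\infty\rrbracket$ by \cite[Theorem~2.14]{BFSuper}.

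Next, I would feed this identity into the Poitou--Tate global duality sequence. Under Assumption~\ref{asi:cong}, the residual representation is irreducible, so $H^0$- and $H^2$-type global terms that appear are controlled, and the Poitou--Tate sequence (together with the definition of $\cX^\natural(f)^{\omega^i}$ via $\ker(\col^\natural)$ in Definition~\ref{def:selmergrps}) gives an exact sequence of the form
\[
0 \longrightarrow \HIw(\QQ_\mathrm{cyc},T_f)^{\omega^i}/(z_f)\longrightarrow \frac{e_{\omega^i}\cdot\mathrm{Im}(\col^\natural)}{(\col^\natural(z_f)^{\omega^i})} \longrightarrow \cX^\natural(f)^{\omega^i}\longrightarrow Y \longrightarrow 0,
\]
where $Y$ is a finite quotient of a global $H^2$-module. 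Kato's divisibility asserts that $\Char(\HIw/(z_f))$ divides $\Char$ of the appropriate $H^2$-module. Translating via the sequence above and using $e_{\omega^i}\cdot\mathrm{Im}(\col^\natural)\subseteq\xi_{i,\natural}\cdot\cO\llbracket G_\infty\rrbracket$ of finite index, one obtains
\[
\Char_{\cO\llbracket G_\infty\rrbracket}\!\left(\cX^\natural(f)^{\omega^i}\right) \;\supseteq\; \left(\frac{L_p(f,\natural,\omega^i,X)}{\xi_{i,\natural}}\right) \cdot p^{\,e}
\]
for some non-negative integer $e$ accounting for the finite-index discrepancy and any finite $H^0$-type contribution.

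Finally, the hypothesis $\mu(f,\omega^i)^\natural_\mathrm{alg}=\mu(f,\omega^i)^\natural_\mathrm{an}=0$ forces $e=0$. Indeed, comparing $\mu$-invariants on the two sides of the divisibility shows that any extra $p$-power must contribute to either $\mu(f,\omega^i)^\natural_\mathrm{alg}$ or to the analytic $\mu$-invariant; vanishing of both forces the discrepancy to be a unit. This yields the desired inclusion $L_p(f,\natural,\omega^i,X)/\xi_{i,\natural}\in\Char_{\cO\llbracket G_\infty\rrbracket}\cX^\natural(f)^{\omega^i}$.

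The main obstacle in carrying this out rigorously is the bookkeeping in the Poitou--Tate sequence and tracking the precise role of $\xi_{i,\natural}$ as the cokernel of $\col^\natural$ on the $e_{\omega^i}$-component; one must verify that the finite-index terms do not introduce spurious $p$-power factors beyond what is cancelled by the $\mu=0$ hypothesis. This has essentially been carried out in the signed/supersingular literature (see e.g.\ \cite{kobayashi03,kim09AJM,HL19}), and the argument here is a direct adaptation, streamlined by Theorem~\ref{thm:decomp} and the integrality result Theorem~\ref{thm:Lp-integral}.
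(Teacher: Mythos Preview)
Your approach is essentially the same as the paper's: both deduce the divisibility from Kato's theorem via the signed Coleman maps, obtaining the inclusion up to a power of $p$, and then invoke the vanishing of both $\mu$-invariants to kill that power. The paper simply cites \cite[proof of Corollary~6.8]{LLZ0} for the divisibility-up-to-$p^e$ step rather than rewriting the Poitou--Tate argument you sketch.

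There is one genuine inaccuracy in your write-up, however. You assert that
\[
\col^\natural(z_f)^{\omega^i} \;=\; u \cdot \xi_{i,\natural}\cdot L_p(f,\natural,\omega^i,X)
\]
with $u\in\cO\llbracket G_\infty\rrbracket^\times$. This is not correct: the signed $p$-adic $L$-functions constructed in \cite{LLZ0,LLZ0.5,BFSuper} as images of Kato's zeta element under $\col^\natural$ are normalized by periods arising from the Wach-module/Kato framework, whereas the $L_p(f,\natural,\omega^i,X)$ of this paper is built from Mazur--Tate elements and hence normalized by the cohomological periods $\Omega_f^\pm$. The ratio of these two periods is a nonzero constant in $E^\times$ but need \emph{not} be a $p$-adic unit. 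This is precisely the source of the extra $p$-power in the paper's proof (``the $p$-adic $L$-function considered in \textit{loc.\ cit.}\ corresponds to the one studied in the present article by a constant given by the ratio of the respective periods''), and it is distinct from the finite-index and $H^0$-type contributions you mention. Your argument survives because this period discrepancy is also a pure $p$-power and is likewise absorbed by the hypothesis $\mu_\mathrm{alg}=\mu_\mathrm{an}=0$; but you should attribute the $p^e$ primarily to this period mismatch rather than to Poitou--Tate error terms.
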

\begin{proof}
It follows from \cite[proof of Corollary~6.8]{LLZ0} that there exists an integer $n_i^\natural$ such that
  \[
    p^{n_i^\natural}\cdot  \frac{L_p(f,\natural,\omega^i,X)}{\xi_{i,\natural}}\in \Char_{\cO\llbracket G_\infty\rrbracket}\cX^\natural(f)^{\omega^i}
    \]
since the $p$-adic $L$-function considered in \textit{loc. cit.} corresponds to the one studied in the present article by a constant given by the ratio of the respective periods utilized. The assumption on the vanishing of the $\mu$-invariants allows us to remove the constant $p^{n_i^\natural}$.
\end{proof}

Let $\Sigma_0$ be the finite set of primes dividing $N_fN_g$ as before. We  consider the $\Sigma_0$-imprimitive version of the sharp and flat Selmer groups obtained by ignoring the local conditions at primes in $\Sigma_0$
\[
\Sel_{p,\Sigma_0}^{\sharp/\flat} ( \QQ_\infty,W_f) = \ker\left( H^1(\QQ_{\Sigma_0}/\QQ_\infty, W_f) \to  \frac{H^1(\QQ_{\infty,\mathfrak{p}}, W_f)}{H_f^1(\QQ_{\infty,\mathfrak{p}}, W_f)^{\sharp/\flat}} \right)
\]
and its Pontryagin dual $\mathcal{X}^{\sharp/\flat}_{\Sigma_0}$. As discussed in \cite[\S5]{HL19}, we have the following isomorphism
\begin{equation}\label{eq:Selquotient}
\Sel_{p,\Sigma_0}^{\sharp/\flat} ( \QQ_\infty,W_f) / \Sel_p^{\sharp/\flat} ( \QQ_\infty,W_f) \cong \prod_{\mathfrak{v} | v \in \Sigma_0} \frac{H^1(\QQ_{\infty,\mathfrak{v}}, W_f)}{H_f^1(\QQ_{\infty,\mathfrak{v}}, W_f)}.
\end{equation}
In fact, $H_f^1(\QQ_{\infty,\mathfrak{v}}, W_f)=0$ whenever $\fv\nmid p$ since it is dual to 
\[
\varprojlim_n \frac{H^1(\QQ_v(\mu_{p^n}),T_f)}{H^1_f(\QQ_v(\mu_{p^n}),T_f)},
\]
which is zero by \cite[Lemma~6.2]{lei09}. Therefore, we have the equality
\begin{equation}
\Char_{\Lambda}\left( \mathcal{X}^{\sharp/\flat}(f)_{\Sigma_0}^{\omega^i} \right) = \Char_{\Lambda}\left( \mathcal{X}^{\sharp/\flat}(f)^{\omega^i} \right) \times \prod_{v \in \Sigma_0} \widehat{\mathcal{E}}_v(f,\omega^i),
\label{eq:selmer}    
\end{equation}
where $\widehat{\mathcal{E}}_v(f,\omega^i)$ is the characteristic ideal of the $\omega^i$-isotypic component of $\prod_{\mathfrak{v} \mid v} H^1(\QQ_{\infty,\fv},W_f)^\vee$.
\medskip

\begin{theorem}[Theorem~\ref{thmB}]\label{thm:algebraic}
Let $f$, $g$, $\omega^i$, $\natural\in\{\sharp,\flat\}$ and $\Sigma_0$ be as in Theorem~\ref{thm:analytic}. Then $\mathcal{X}^\natural(f)^{\omega^i}$ and $\mathcal{X}^\natural(f)^{\omega^i}$ are $\Lambda$-torsion. In addition, we have that $\mu(f, \omega^i)^\natural_\text{\rm alg}=0$ if and only if $\mu(g, \omega^i)^\natural_\text{\rm alg}=0$, in which case
\[ 
\lambda(f,\omega^i)_\text{\rm alg}^\natural = \lambda(g,\omega^i)_\text{\rm alg}^\natural + \sum_{v \in \Sigma}\left( \mathbf{e}_{v}(g,\omega^i) - \mathbf{e}_{v}(f,\omega^i)\right),
\]
where $\mathbf{e}_{v}(\ast,\omega^i)$ are defined as in the statement of Theorem~\ref{thm:analytic}.
\end{theorem}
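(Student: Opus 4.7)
The plan is to mirror the analytic argument of Theorem~\ref{thm:analytic} on the algebraic side, using the $\Sigma_0$-imprimitive sharp/flat Selmer groups as the common intermediary and exploiting Assumption~\ref{asi:cong}. The torsion statement is essentially free: since Assumption~\ref{asi:Lp-nonzero} is assumed for both $f$ and $g$ (this being subsumed in Assumptions~\ref{asi:FL}--\ref{asi:cong} via the hypotheses of Theorem~\ref{thm:analytic}), Remark~\ref{rk:cotorsion} yields that both $\mathcal{X}^\natural(f)^{\omega^i}$ and $\mathcal{X}^\natural(g)^{\omega^i}$ are torsion over $\cO\llbracket G_\infty\rrbracket$, and hence so are their $\Sigma_0$-imprimitive counterparts via \eqref{eq:Selquotient}.

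The heart of the argument will be to establish that the residual imprimitive signed Selmer groups of $f$ and $g$ agree. By Assumption~\ref{asi:cong}, $T_f/pT_f \cong T_g/pT_g$ as $G_\QQ$-modules, so $W_f[p] \cong W_g[p]$, which induces an identification
\[
H^1(\QQ_{\Sigma_0}/\QQ_\infty, W_f[p]) \cong H^1(\QQ_{\Sigma_0}/\QQ_\infty, W_g[p]).
\]
At each prime $v \in \Sigma_0$ with $v \neq p$ the imprimitive Selmer groups carry no local condition, so only the condition at $p$ requires attention. Here the signed local condition $H^1_f(\QQ_{\infty,\mathfrak{p}},W_\ast)^{\sharp/\flat}$ is defined as the annihilator of $\ker(\col^{\sharp/\flat}_\ast)$, and these Coleman maps are built from the logarithmic matrices $M_{\log,\ast}$. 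By Lemma~\ref{lem:cong-Cn}, $C_{n,f} \equiv C_{n,g} \pmod p$ for every $n$, so passing to the limit gives $M_{\log,f} \equiv M_{\log,g} \pmod p$. Consequently the signed local conditions modulo $p$ are matched under the residual isomorphism, and we obtain
\[
\Sel_{p,\Sigma_0}^{\natural}(\QQ_\infty,W_f)[p] \cong \Sel_{p,\Sigma_0}^{\natural}(\QQ_\infty,W_g)[p]
\]
as $\cO\llbracket G_\infty\rrbracket$-modules.

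With this residual identification in hand, the standard Greenberg--Vatsal machinery, as carried out in this non-ordinary setting in \cite{HL19}, applies directly. A finitely generated torsion $\cO\llbracket G_\infty\rrbracket$-module $M$ satisfies $\mu(M)=0$ if and only if $M[p]$ is finitely generated over $\FF_p\llbracket G_\infty\rrbracket/(p)$, and in that case $\lambda(M)$ equals the $\FF_p$-corank of $M[p]$. Applied to $\mathcal{X}^\natural(\ast)_{\Sigma_0}^{\omega^i}$, this yields both the equivalence
\[
\mu\!\left(\mathcal{X}^\natural(f)_{\Sigma_0}^{\omega^i}\right)=0 \iff \mu\!\left(\mathcal{X}^\natural(g)_{\Sigma_0}^{\omega^i}\right)=0,
\]
and, when the $\mu$-invariants vanish, the equality of the corresponding $\lambda$-invariants.

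Finally, to descend to the primitive invariants, I would invoke \eqref{eq:selmer}, which gives
\[
\mu\!\left(\mathcal{X}^\natural(\ast)_{\Sigma_0}^{\omega^i}\right) = \mu\!\left(\mathcal{X}^\natural(\ast)^{\omega^i}\right) + \sum_{v \in \Sigma_0}\mu\!\left(\widehat{\mathcal{E}}_v(\ast,\omega^i)\right),
\]
and similarly for $\lambda$. A local cohomological computation (analogous to the one used on the analytic side via \cite[Lemma~3.7.4]{EPW}) shows that each $\widehat{\mathcal{E}}_v(\ast,\omega^i)$ has $\mu$-invariant zero with $\lambda$-invariant equal to $\mathbf{e}_v(\ast,\omega^i)$. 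Combining these identities and rearranging produces the stated transition formula for $\lambda(\ast,\omega^i)_{\mathrm{alg}}^\natural$. The main obstacle I anticipate is the verification that the mod-$p$ congruence $M_{\log,f}\equiv M_{\log,g}\pmod p$ actually identifies the two signed local conditions at $p$ rather than merely congruent subspaces; this requires being careful about the role of the Tate pairing and the image-versus-kernel formulation of the local condition, but it is precisely the content of the relevant arguments in \cite{HL19}.
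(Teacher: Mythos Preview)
Your proposal is correct and follows essentially the same route as the paper: both obtain torsion from Remark~\ref{rk:cotorsion}, both rely on \cite[Theorem~4.6]{HL19} for the equality of imprimitive $\mu$- and $\lambda$-invariants (you partially unpack that argument, whereas the paper cites it as a black box), and both descend to the primitive invariants via \eqref{eq:selmer} together with the identification of $\widehat{\mathcal{E}}_v(\ast,\omega^i)$ with the ideal generated by $\mathcal{E}_v(\ast,\omega^i)$, which the paper attributes to the proof of \cite[Proposition~2.4]{greenbergvatsal}. Your minor notational slips (writing $\FF_p\llbracket G_\infty\rrbracket/(p)$ instead of $(\cO/p)\llbracket G_\infty\rrbracket$, and ``corank'' where ``dimension'' is meant) are harmless.
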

\begin{proof}
Let $\ast\in\{f,g\}$.
The element $\mathcal{E}_v(\ast, \omega^i)$ defined in Section~\ref{sec:3} generates the characteristic ideal of $\widehat{\mathcal{E}}_v(\ast, \omega^i)$. Furthermore, the $\cO_f$-rank of $\prod_{\mathfrak{v} \mid v} H^1(\QQ_{\infty,\fv},W_f)^\vee$ coincides with the degree of $\mathcal{E}_v(\ast, \omega^i)$. These follow are derived from the proof of \cite[Proposition~2.4] {greenbergvatsal} carried out on p.37--38 of \textit{op. cit.}, after identifying the $\omega^i$-isotypic component of $H^1(\QQ_{\infty,\fv},W_\ast)^\vee$ with $H^1(\QQ_{\cyc,\fv},W_\ast(\omega^{-i}))^\vee$. It follows that the $\mu$-invariant of $\widehat{\mathcal{E}}_v(\ast, \omega^i)$ is zero, whereas its $\lambda$-invariant is equal to $\mathbf{e}_{v}(\ast,\omega^i)$.

Recall from Remark~\ref{rk:cotorsion} that Assumption~\ref{asi:Lp-nonzero} guarantees that $\mathcal{X}^{\natural}(f)^{\omega^i}$ and $\mathcal{X}^{\natural}(g)^{\omega^i}$ are torsion over $\cO\llbracket G_\infty\rrbracket$. The assertion on the $\mu$-invariants follows from \cite[Theorem~4.6]{HL19}. 

By \emph{loc.~cit.}, the $\lambda$-invariants of the dual $\Sigma_0$-imprimitive Selmer groups $\Sel_{\Sigma_0}^{\sharp/\flat}(\QQ_\infty,W_f)^\vee$ and $\Sel_{\Sigma_0}^{\sharp/\flat}( \QQ_\infty,W_g)^\vee$ coincide. Combined with \eqref{eq:selmer}, we obtain
\[
\lambda(f, \omega^i)_\text{\rm alg}^\natural + \sum_{v \in \Sigma_0} \lambda(\widehat{\mathcal{E}}_v(f, \omega^i)) = \lambda(g, \omega^i)_\text{\rm alg}^\natural + \sum_{v \in \Sigma_0} \lambda(\widehat{\mathcal{E}}_v(g, \omega^i)).
\]
We note that it was assumed in \textit{loc. cit.} that the eigenforms $f$ and $g$ have even weight $k \geq 2$. This condition is not used in the proof and carries over in verbatim in cases where $k$ is odd.
\end{proof}

We conclude the article with the following application to the Iwasawa main conjecture.
\begin{theorem}[Theorem~\ref{thmC}]\label{thm:IMC}
    Let $f$, $g$, $\natural$ and $i$ be chosen so that Assumptions~\ref{asi:FL}--\ref{asi:cong} hold. If $\mu(\ast, \omega^i)^\natural_\text{\rm alg}=\mu(\ast, \omega^i)^\natural_\text{\rm an}=0$ for $\ast\in\{f,g\}$, then Conjecture~\ref{IMC} holds for $\cX^\natural(f)^{\omega^i}$ if and only if it holds for $\cX^\natural(g)^{\omega^i}$.
\end{theorem}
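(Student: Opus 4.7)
The plan is to combine Proposition~\ref{prop:one-inclusion} with Theorems~\ref{thm:analytic} and \ref{thm:algebraic} to reduce Conjecture~\ref{IMC} to a single numerical identity between $\lambda$-invariants that is symmetric in $f$ and $g$. First, applying Proposition~\ref{prop:one-inclusion} to both $f$ and $g$ (which is permitted since all our hypotheses are in force) yields the divisibility
\[
\frac{L_p(\ast,\natural,\omega^i,X)}{\xi_{i,\natural}}\in\Char_{\cO\llbracket G_\infty\rrbracket}\cX^\natural(\ast)^{\omega^i},\qquad \ast\in\{f,g\}.
\]
Thus one inclusion of Conjecture~\ref{IMC} already holds for both $f$ and $g$ unconditionally under the vanishing of the $\mu$-invariants; the task is to show that for one form the opposite divisibility is equivalent to the opposite divisibility for the other.

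Next, I would observe that since $\mu^\natural_\text{alg}(\ast,\omega^i)=\mu^\natural_\text{an}(\ast,\omega^i)=0$ by assumption, and since $\xi_{i,\natural}$ is an explicit polynomial in $X$ depending only on $k$, $p$, $i$, $\natural$ (in particular, the same for $f$ and $g$, with a common $\lambda$-invariant that I will denote $\lambda(\xi_{i,\natural})$), Conjecture~\ref{IMC} for $\ast$ is equivalent to the numerical identity
\[
\lambda(\ast,\omega^i)^\natural_\text{alg}=\lambda(\ast,\omega^i)^\natural_\text{an}-\lambda(\xi_{i,\natural}).
\]
This is the standard promotion of a one-sided divisibility to an equality when both sides are $\mu$-trivial and have equal $\lambda$-invariants.

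The key step is then to invoke Theorems~\ref{thm:analytic} and \ref{thm:algebraic}, which yield identical transition formulae
\[
\lambda(f,\omega^i)^\natural_\bullet-\lambda(g,\omega^i)^\natural_\bullet=\sum_{v\in\Sigma_0}\bigl(\mathbf{e}_v(g,\omega^i)-\mathbf{e}_v(f,\omega^i)\bigr),\qquad \bullet\in\{\text{an},\text{alg}\}.
\]
Subtracting these two identities shows that the difference $\lambda^\natural_\text{an}(\ast,\omega^i)-\lambda^\natural_\text{alg}(\ast,\omega^i)$ takes the same value for $\ast=f$ and $\ast=g$. Consequently, the identity displayed above holds for $f$ if and only if it holds for $g$, completing the proof.

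The argument is essentially an assembly of previously established pieces, so the true work lies in the earlier theorems; the only point requiring care is verifying that the polynomial $\xi_{i,\natural}$ is genuinely form-independent (it is, by its definition in \cite{LLZ0.5}), so that its $\lambda$-invariant cancels when the equivalence for $f$ is transferred to $g$. No new technical obstacle arises at this stage.
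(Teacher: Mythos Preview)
Your proof is correct and follows essentially the same approach as the paper's: both reduce Conjecture~\ref{IMC} (under $\mu=0$) to the equality $\lambda^\natural_{\mathrm{alg}}=\lambda^\natural_{\mathrm{an}}-\lambda(\xi_{i,\natural})$ via Proposition~\ref{prop:one-inclusion}, and then use Theorems~\ref{thm:analytic} and \ref{thm:algebraic} to show that $\lambda^\natural_{\mathrm{an}}-\lambda^\natural_{\mathrm{alg}}$ is the same for $f$ and $g$. Your explicit remark that $\xi_{i,\natural}$ is form-independent is a useful clarification that the paper leaves implicit.
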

\begin{proof}
    Proposition~\ref{prop:one-inclusion} and Assumption~\ref{asi:Lp-nonzero} imply that
    \[
    \lambda(\ast,\omega^i)^\natural_\mathrm{an}-\deg\xi_{i,\natural}\ge \lambda(\ast,\omega^i)^\natural_\mathrm{alg}.
    \]
    Furthermore, Conjecture~\ref{IMC} holds if and only if the equality holds. 

    After combining Theorems~\ref{thm:algebraic} and \ref{thm:analytic}, we have
    \[
    \lambda(f, \omega^i)_\text{\rm an}^\natural - \lambda(g, \omega^i)_\text{\rm an}^\natural =\lambda(f, \omega^i)_\text{\rm alg}^\natural -\lambda(g, \omega^i)_\text{\rm alg}^\natural .
    \]
    Hence, the result follows.
\end{proof}

\bibliographystyle{alpha}
\bibliography{references}

\end{document}